\numberwithin{equation}{section}
\newtheorem{thm}{Theorem}[section]
\newtheorem{cor}[thm]{Corollary}
\newtheorem{lem}[thm]{Lemma}
\newtheorem{dfn}[thm]{Definition}
\newtheorem{prop}[thm]{Proposition}
\newtheorem{rem}[thm]{Remark}
\begin{document}
\title[Free boundary regularity in the optimal partial transport problem]{Free boundary regularity in the optimal partial transport problem}
\author[E. Indrei]{Emanuel Indrei}

\maketitle


\def\signei{\bigskip\begin{center} {\sc Emanuel Indrei\par\vspace{3mm}
Department of Mathematics\\
The University of Texas at Austin\\
1 University Station, C1200\\
Austin TX 78712, USA\\
email:} {\tt eindrei@math.utexas.edu}
\end{center}}

%

%

%


%



\begin{abstract}
In the optimal partial transport problem, one is asked to transport a fraction $0<m \leq \min\{||f||_{L^1}, ||g||_{L^1}\}$ of the mass of $f=f \chi_\Omega$ onto $g=g\chi_\Lambda$ while minimizing a transportation cost. If $f$ and $g$ are bounded away from zero and infinity on strictly convex domains $\Omega$ and $\Lambda$, respectively, and if the cost is quadratic, then away from $\partial(\Omega \cap \Lambda)$ the free boundaries of the active regions are shown to be $C_{loc}^{1,\alpha}$ hypersurfaces up to a possible singular set. This improves and generalizes a result of Caffarelli and McCann \cite{CM} and solves a problem discussed by Figalli \cite[Remark 4.15]{Fi}. Moreover, a method is developed to estimate the Hausdorff dimension of the singular set: assuming $\Omega$ and $\Lambda$ to be uniformly convex domains with $C^{1,1}$ boundaries, we prove that the singular set is $\mathcal{H}^{n-2}$ $\sigma$-finite in the general case and $\mathcal{H}^{n-2}$ finite if $\Omega$ and $\Lambda$ are separated by a hyperplane.       
\end{abstract}

\section{Introduction}
Given two non-negative functions $f, g \in L^1(\mathbb{R}^n)$ and a number $m \leq \min\{||f||_{L^1}, ||g||_{L^1}\}$, the optimal partial transport problem consists of finding an optimal transference plan between $f$ and $g$ with mass $m$. In this context, a transference plan refers to a non-negative, finite Borel measure $\gamma$ on $\mathbb{R}^n \times \mathbb{R}^n$ with mass $m$ (i.e. $\gamma(\mathbb{R}^n \times \mathbb{R}^n)=m$) whose first and second marginals are controlled by $f$ and $g$ respectively: for any Borel set $A \subset \mathbb{R}^n$,
$$\gamma(A\times \mathbb{R}^n) \leq \int_{A} f(x)dx, \hskip .2in \gamma(\mathbb{R}^n \times A) \leq \int_{A} g(x)dx.$$
Let $\Gamma_{\leq}^m(f,g)$ denote the set of transference plans. By an optimal transference plan, we mean a minimizer of 
\begin{equation} \label{mini}
\inf_{\gamma \in\Gamma_{\leq}^m(f,g)} \int_{\mathbb{R}^n \times \mathbb{R}^n} |x-y|^2 d\gamma(x,y).
\end{equation}       

Issues of existence, uniqueness, and regularity of optimal transference plans have recently been addressed by Caffarelli \& McCann \cite{CM} and Figalli \cite{Fi}, \cite{Fi2}. By standard methods in the calculus of variations, one readily obtains existence of minimizers. However, in general, minimizers of (\ref{mini}) are far from unique. To see this, let $f\wedge g:= \min\{f,g\}$ and suppose $\mathcal{L}^n(supp (f \wedge g))>0$ (with $\mathcal{L}^n(\cdot):=|\cdot|$ being the Lebesgue measure and $supp(f \wedge g)$ the support of $f \wedge g$). Pick $$0<m < \int_{\mathbb{R}^n} (f \wedge g)(x) dx,$$and let $h<f \wedge g$ be any function with $||h||_{L^1(\mathbb{R}^n)}=m$. Note that the transference plan $\gamma_h:=(Id \times Id)_{\#}h$ is optimal (since its cost is zero). However, to construct this family of examples, one needs $\mathcal{L}^n(supp\{f \wedge g\})>0$. Indeed, under a disjointness assumption on the supports, Caffarelli and McCann \cite[Theorem 4.3]{CM} prove the existence of two domains $U_m \subset \Omega$, $V_m \subset \Lambda$ and a unique convex function $\Psi$ such that the unique minimizer of (\ref{mini}) is $\gamma:=(Id \times \nabla \Psi)_{\#}f \chi_{U_m}$, where $\nabla \Psi$ is the optimal transport between $f \chi_{U_m}$ and $g \chi_{V_m}$ \big($\overline{U_m \cap \Omega}$ and $\overline{V_m\cap \Lambda}$ are usually referred to as the active regions \big). Furthermore, by invoking Caffareli's regularity theory for the Monge-Amp\`{e}re equation \cite{C1}, \cite{C2}, \cite{C3}, \cite{C4}, the authors show that if $f$ and $g$ are supported on strictly convex domains separated by a hyperplane, then higher regularity on the densities implies higher regularity on $\Psi$ in the interior of the active region $\overline{U_m\cap \Omega}$ \cite[Theorem 6.2]{CM}. Moreover, employing a geometric approach, Caffarelli and McCann prove $\Psi \in C_{loc}^{1,\alpha}(\overline{\Omega \cap U_m} \setminus E)$ \cite[Corollary 7.14]{CM}, where $E \subset \partial \Omega$ is a possible singular set, and since $\nabla \Psi$ gives the direction of the normal to the free boundary $\overline{\partial U_m \cap \Omega}$ \cite[Corollary 7.15]{CM}, they also obtain local $C^{1,\alpha}$ regularity of the free boundary (symmetric arguments imply a similar statement for $\overline{\partial V_m \cap \Omega}$ -- the free boundary associated to $\Lambda$).

Figalli \cite{Fi} studies the case in which the disjointness assumption on the supports of the densities is removed. He proves that minimizers to (\ref{mini}) are unique for $$||f \wedge g||_{L^1(\mathbb{R}^n)}\leq m \leq \min\{||f||_{L^1(\mathbb{R}^n)}, ||g||_{L^1(\mathbb{R}^n)}\} \large,$$ \cite[Proposition 2.2 and Theorem 2.10]{Fi}. In fact, uniqueness of the partial transport is obtained for a general class of cost functions $c(x,y)$, dealing also with the case in which $f$ and $g$ are densities on a Riemannian manifold and $c(x,y)=d(x,y)^2$, where $d(x,y)$ is the Riemannian distance.

As in the disjoint case, Figalli obtains local interior $C^{0,\alpha}$ regularity of the partial transport (i.e. $\Psi \in C_{loc}^{1,\alpha}(U_m\cap \Omega))$ under some weak assumptions on the densities \cite[Theorem 4.8]{Fi}. However, in sharp contradistinction to the disjoint case, he constructs an example with $C^\infty$ densities for which the partial transport is not $C^1$, thereby showing that the interior $C_{loc}^{0,\alpha}$ regularity is in this sense optimal \cite[Remark 4.9]{Fi}. Furthermore, by assuming the densities to be bounded away from zero and infinity on strictly convex domains, he goes on to say that $\Psi$ has a $C^1$ extension to $\mathbb{R}^n$, and utilizing that $\nabla \Psi$ gives the direction of the normal to $\overline{\partial U_m \cap \Omega}$ (as in the disjoint case), he also derives local $C^1$ regularity of the free boundary away from $\partial(\Omega \cap \Lambda)$ \cite[Theorems 4.10 \& 4.11]{Fi}. 

                  
However, the author suggests that it may be possible to adapt the method of Caffarelli and McCann to prove H\"{o}lder regularity of the partial transport up to the free boundary \cite[Remark 4.15]{Fi}. As a direct corollary, one would thereby improve the $C_{loc}^{1}$ regularity of the free boundaries away from the common region into $C_{loc}^{1,\alpha}$ regularity. The first aim of the present work is to prove this result, see Corollary \ref{cor2}. Our method of proof follows the line of reasoning in Caffarelli and McCann \cite[Section 7]{CM}, although new ideas are needed to get around the lack of a separating hyperplane. Indeed, as mentioned earlier, Figalli's counterexample to $C^1$ regularity of the transport map in the non-disjoint case shows that the assumption of a separating hyperplane plays a crucial role in the regularity theory of the partial transport. 
The key part of our proof is the adaptation of the uniform localization lemma \cite[Lemma 7.11]{CM} (cf. Lemma \ref{d}). This is achieved by classifying the extreme points of the set $Z_{min}$ which comes up in the course of proving this lemma. Indeed, in the disjoint case, Caffarelli and McCann prove that the extreme points are in $\overline \Lambda$; however, this is insufficient to close the argument in the general case. To get around this difficulty, we make use of a theorem established by Figalli \cite[Theorem 4.10]{Fi}. Our method has the added feature of allowing us to identify, in a very specific way, the geometry of the singular set which comes up in the work of Caffarelli and McCann and prove the general uniform localization lemma under assumptions which in the disjoint case turn out to be weaker than the ones found in their work \cite[Lemma 7.11]{CM} (cf. Remark \ref{rem111}).     

The second aim of this paper is to prove that away from $\partial(\Omega \cap \Lambda)$, the free boundary intersects the fixed boundary in a $C^{1,\alpha}$ way up to a ``small" singular set. In the disjoint case, Caffarelli and McCann discovered that this set consists of nontransverse intersection points of fixed with free boundary and points that map to non-locally convex parts of the path-connected target region. Therefore, even in this case, one may not directly apply the implicit function theorem to obtain an estimate on its Hausdorff dimension. However, we exploit the geometry in the uniform localization lemma to prove that in addition to the above description, nontransverse singular points also have the property that when one shoots rays to infinity emanating from these points and in the direction of the normal to the boundary, the half-lines that are generated intersect the closure of the target region only along its boundary (see e.g. Figure \ref{xenon} and the set $X_s$ in Lemma \ref{d}). It turns out that this geometry is sufficient to connect the singular set with projections of convex sets onto other convex sets and prove a corresponding rectifiability result; this is the content of Proposition \ref{geom2b}.     

Mathematically, the previous discussion takes the following form: if the supports of the densities are separated by a hyperplane, then as previously mentioned, Caffarelli and McCann prove $\Psi \in C_{loc}^{1,\alpha}(\overline{\Omega \cap U_m} \setminus E)$, where $E \subset \partial \Omega$ is a closed set \cite[Corollary 7.15]{CM}. We generalize an improvement of this result to the non-disjoint case. Indeed, our result states that there exists a closed set $\tilde E \subset \partial \Omega \cup \partial(\Omega \cap \Lambda)$ for which $\Psi \in C_{loc}^{1,\alpha}(\overline{\Omega \cap U_m} \setminus \tilde E)$, and if $\overline{\Omega} \cap \overline{\Lambda}=\emptyset$, then $\tilde E \subset E$ (see Corollary \ref{cor1} and Remark \ref{frem}). Moreover, thanks to the general uniform localization lemma (Lemma \ref{d}), we are able to identify the set $\tilde E$ explicitly in terms of the geometry of $\Omega$ and $\Lambda$; using this information we prove that if the supports are uniformly convex with $C^{1,1}$ boundaries, then the singular set for the free boundaries is relatively closed (away from the common region $\Omega \cap \Lambda$) and $\mathcal{H}^{n-2}$ $\sigma$-finite in the general case and compact with $\mathcal{H}^{n-2}$ finite measure in the disjoint case; this is the content of Theorem \ref{thhm}.  
 
The paper is organized as follows: in \S \ref{Section2}, we fix some notation and introduce relevant ideas from the literature which will be useful in our analysis. \S \ref{Section3} is devoted to the $C_{loc}^{0,\alpha}$ regularity theory of the partial transport up to the free boundary; indeed, in this section we utilize the method of Caffarelli and McCann \cite[Section 7]{CM} to solve the problem mentioned by Figalli \cite[Remark 4.15]{Fi}. \S \ref{Section4} deals with the Hausdorff dimension of the singular set, and \S \ref{Section5} discusses several open problems.

\section{Preliminaries}
\label{Section2}
In this section, we will fix the notation for the remainder of the paper and state some of the relevant theorems from the literature.

\subsection{Notation} 

\begin{dfn} \label{projj} Given $\Omega \subset \mathbb{R}^n$ and a convex set $\Lambda \subset \mathbb{R}^n$, we denote the orthogonal projection of $\Omega$ onto $\Lambda$ by $P_\Lambda(\Omega)$.   
\end{dfn}

\noindent Note that in the special case when $\Omega \cap \Lambda = \emptyset$, $P_\Lambda(\Omega) \subset \partial \Lambda$. Hence, we understand $\partial P_\Lambda(\Omega)$ to be the boundary of $P_\Lambda(\Omega)$ seen as a subset of $\partial \Lambda$. In other words, $P_\Lambda(\Omega)$ is a manifold with boundary, and we denote the boundary by $\partial P_\Lambda(\Omega)$. In the general case, $\partial(P_\Lambda(\Omega) \cap \partial \Lambda)$ is defined in a similar way.       

\begin{dfn} \label{tang}
Given a $C^1$ set $\Lambda$, we denote the tangent space of $\Lambda$ at a point $y \in \partial \Lambda$ by $\mathbb{T}_y \Lambda$. Similar notation will be used if the set is Lipschitz.  
\end{dfn}

\begin{dfn} \label{cone}
Given an $(m-1)$-plane $\pi$ in $\mathbb{R}^m$, we denote a general cone with respect to $\pi$ by $$C_\alpha(\pi):= \{z \in \mathbb{R}^m: \alpha |P_\pi(z)| < P_{\pi^\perp}(z)\},$$ where $\pi \oplus \pi^\perp = \mathbb{R}^m$, $\alpha>0$, and $P_\pi(z)$ $\&$ $P_{\pi^\perp}(z)$ are the orthogonal projections of $z\in \mathbb{R}^m$ onto $\pi$ and $\pi^\perp$, respectively. 
\end{dfn}



\begin{dfn}
Given a convex function $\Psi$, we denote its corresponding Monge-Amp\`{e}re measure by $$M_\Psi(B):= \mathcal{L}^n(\partial \Psi(B)),$$ where $B \subset \mathbb{R}^n$ is an arbitrary Borel set and $\partial \Psi$ is the sub-differential of $\Psi$.    
\end{dfn}

\begin{dfn} For a convex body $Z$, $t\cdot Z$ denotes the dilation of $Z$ around its barycenter z (center of mass with respect to Lebesgue measure) by a factor $t\geq0$: $$t\cdot Z:=(1-t)z+tZ.$$ 
\end{dfn}

\begin{dfn} \label{daff} A Radon measure $\mu$ on $\mathbb{R}^n$ doubles affinely on $X \subset \mathbb{R}^n$ if there exists $C>0$ such that each point $x\in X$ has a neighborhood $N_x \subset \mathbb{R}^n$ such that each convex body $Z \subset N_x$ with barycenter in $X$ satisfies $\mu[Z] \leq C \mu\large[\frac{1}{2} \cdot Z\large].$
The constant $C$ is called the doubling constant of $\mu$ on $X$, and $N_x$ is referred to as the doubling neighborhood of $\mu$ around $x$.   
\end{dfn}

\begin{dfn} \label{as} Given $\epsilon>0$ and a convex function $\Psi$, we will denote the $\epsilon$ centered affine section of $\Psi$ at a locally convex point $z \in$ dom $\Psi$ (i.e.  the domain of $\Psi$) by $$Z_\epsilon(z):=Z_\epsilon^\Psi (z)=\{x\in \mathbb{R}^n: \Psi(x)<\epsilon+\Psi(z)+\langle \nu_\epsilon, x-z\rangle \},$$ where $\nu_\epsilon \in \mathbb{R}^n$ is uniquely chosen so that $z$ is the barycenter of $Z_\epsilon(z)$ (see \cite[Theorem A.7 and Lemma A.8]{CM} and \cite{C2}).  
\end{dfn}

\begin{dfn} \label{puc} Fix $p\geq 2$ and a domain $\Omega \subset \mathbb{R}^n$. A locally Lipschitz function $\Psi: \Omega \rightarrow \mathbb{R}$ is $p$-uniformly convex on $\Omega$ if there exists $C>0$ such that all points of differentiability $x,x' \in \Omega \cap dom \nabla \Psi$ satisfy $$\langle \nabla \Psi(x)-\nabla \Psi (x'), x-x' \rangle \geq C|x-x'|^p,$$ where $dom \nabla \Psi$ is the domain of $\nabla \Psi$. 
\end{dfn}

\noindent For an arbitrary convex function $\Psi$, we recall that its \textit{Legendre transform} is the convex function 
\begin{equation} \label{lege}
\Psi^*(y):=\sup_{x \in \mathbb{R}^n}\big( x\cdot y - \Psi(x)\big).
\end{equation}

\begin{rem} \label{hold}
As mentioned in \cite[Remark 7.10]{CM}, if a convex function $\Psi$ is $p$-uniformly convex on $\Omega \subset dom \Psi$, then $\Psi^* \in C^{1,\frac{1}{p-1}}(\partial \Psi(\Omega))$. 
\end{rem}

\begin{dfn} \label{exp}
Let $Z \subset \mathbb{R}^n$ be a closed convex set. A point $p \in Z$ is said to be \textit{exposed} if some hyperplane touches $Z$ only at $p$.  
\end{dfn}

\begin{dfn} \label{ext}
Let $Z \subset \mathbb{R}^n$ be a closed convex set. A point $p \in Z$ is said to be \textit{extreme} if whenever $p=(1-\lambda)p_0+\lambda p_1$ with $\lambda \in (0,1)$, then $p_0=p_1$.    
\end{dfn}

\subsection{Setup}
Given two non-negative, compactly supported functions $f, g \in L^1(\mathbb{R}^n)$, we let $$\Omega:=\{f>0\} \hskip .1in \operatorname{and} \hskip.1in   \Lambda:=\{g>0\},$$ so that $\Omega \cap \Lambda = \{f\wedge g>0\}$. We will always assume $m$ to satisfy: $$||f \wedge g||_{L^1(\mathbb{R}^n)}\leq m \leq \min\{||f||_{L^1(\mathbb{R}^n)}, ||g||_{L^1(\mathbb{R}^n)}\} \large.$$ By the results of Figalli \cite[Section 2]{Fi}, we know that there exists a convex function $\Psi_m$ and non-negative functions $f_m$,  $g_m$ for which $$\gamma_m:=(Id \times \nabla \Psi_m)_{\#}f_m=(\nabla \Psi_m^* \times Id)_{\#}g_m,$$ is the solution of (\ref{mini}) and $\nabla {\Psi_m}_{\#}f_m=g_m$ (see \cite[Theorem 2.3]{Fi}). 

Figalli refers to $\Psi_m$ as the \textit{Brenier solution} to the Monge-Amp\`{e}re equation $$\operatorname{det}(D^2\Psi_m)(x)=\frac{f_m(x)}{g_m(\nabla \Psi_m(x))},$$ with $x \in F_m:=$ set of density points of $\{f_m>0\},$ and $\nabla \Psi_m(F_m) \subset G_m$:= set of density points of $\{g_m>0\}.$ Moreover, following Figalli \cite[Remark 3.2]{Fi}, we set $$U_m:=(\Omega \cap \Lambda) \cup \bigcup_{(\bar x, \bar y) \in \Gamma_m} B_{|\bar x - \bar y|}(\bar y),$$

$$V_m:=(\Omega \cap \Lambda) \cup \bigcup_{(\bar x, \bar y) \in \Gamma_m} B_{|\bar x - \bar y|}(\bar x),$$ where $\Gamma_m$ is the set $$(Id \times \nabla \Psi_m)(F_m \cap D_{\nabla \Psi_m}) \cap (\nabla \Psi_m^* \times Id)(G_m \cap D_{\nabla \Psi_m^*}),$$ with $D_{\nabla \Psi_m}$ and $D_{\nabla \Psi_m^*}$ denoting the set of continuity points for $\nabla \Psi_m$ and $\nabla \Psi_m^*$, respectively.

We denote the free boundary associated to $f_m$ by $\overline{\partial U_m \cap \Omega}$ and the free boundary associated to $g_m$ by $\overline{\partial V_m \cap \Lambda}$. They correspond to $\overline{\partial F_m \cap \Omega}$ and $\overline{\partial G_m \cap \Lambda}$, respectively \cite[Remark 3.3]{Fi}. Recall from the introduction that one of the goals in this paper is to study the regularity of the free boundaries away from $\partial(\Omega \cap \Lambda)$. One method of attacking this problem is to first prove regularity results on $\Psi_m$ and then utilize that $\nabla \Psi_m$ gives the direction of the normal to the free boundary $\overline{\partial U_m \cap \Omega}$ \big(by symmetry and duality, this would also imply a similar result for $\overline{\partial V_m \cap \Lambda}$\big). Indeed, in the following two theorems, Figalli employs this strategy to obtain local $C^1$ regularity.

\begin{thm} \label{Fii1} \cite[Theorem 4.10]{Fi} Suppose $f,g$ are supported on two bounded, open, strictly convex sets $\Omega \subset \mathbb{R}^n$ and $\Lambda \subset \mathbb{R}^n$ respectively, and $$||\operatorname{log}(f(x)/g(y))||_{L^\infty(\Omega \times \Lambda)}<\infty.$$ Then there exists a convex function $\tilde \Psi_m \in C^1(\mathbb{R}^n)\cap C_{loc}^{1,\alpha}(U_m \cap \Omega)$ such that $\tilde \Psi_m = \Psi_m$ on $U_m \cap \Omega$, $\nabla \tilde \Psi_m(x)=x$ on $\Lambda \setminus \overline V_m$, and $\nabla \tilde \Psi_m(\mathbb{R}^n)=\overline \Lambda$. Moreover, $\nabla \tilde \Psi_m : \overline{U_m \cap \Omega} \rightarrow \overline{V_m \cap \Lambda}$ is a homeomorphism (with inverse $\nabla \tilde \Psi_m^*$). 
\end{thm}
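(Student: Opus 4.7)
My plan is to construct $\tilde\Psi_m$ by gluing the Brenier solution $\Psi_m$ (on the active source region) to the paraboloid $\tfrac12|x|^2+C_0$ (whose gradient is the identity), using convex duality to enforce compatibility. Since optimal partial transport leaves the common mass in place, $\nabla\Psi_m=\operatorname{Id}$ on $\Omega\cap\Lambda$, so $\Psi_m$ already equals $\tfrac12|x|^2+C_0$ on this (connected, by strict convexity of $\Omega$ and $\Lambda$) region, making the gluing natural. Concretely, first set
\begin{equation*}
\tilde\Psi_m^*(y):=\begin{cases}\Psi_m^*(y),& y\in\overline{V_m\cap\Lambda},\\ \tfrac12|y|^2-C_0,& y\in\overline\Lambda\setminus V_m,\end{cases}
\end{equation*}
where $\Psi_m^*$ is the Legendre transform of $\Psi_m$, and then define $\tilde\Psi_m(x):=\sup_{y\in\overline\Lambda}\{x\cdot y-\tilde\Psi_m^*(y)\}$. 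By construction $\tilde\Psi_m$ is convex on $\mathbb{R}^n$, agrees with $\Psi_m$ on $U_m\cap\Omega$, satisfies $\nabla\tilde\Psi_m(x)=x$ on $\Lambda\setminus\overline{V_m}$, and has image contained in $\overline\Lambda$.

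For the interior $C^{1,\alpha}_{\mathrm{loc}}$ regularity on $U_m\cap\Omega$, I would invoke Caffarelli's regularity theory for the Monge--Amp\`ere equation. Its hypotheses are met because $U_m$ and $V_m$ are convex (a geometric observation of Caffarelli--McCann), $\Omega$ and $\Lambda$ are strictly convex, and the densities satisfy two-sided bounds; this delivers strict convexity of both $\Psi_m$ and $\Psi_m^*$ together with $C^{1,\alpha}_{\mathrm{loc}}$ regularity of $\Psi_m$ on $U_m\cap\Omega$ (and of $\Psi_m^*$ on $V_m\cap\Lambda$). The equality $\nabla\tilde\Psi_m(\mathbb{R}^n)=\overline\Lambda$ then follows because every $y\in\overline{V_m\cap\Lambda}$ is attained by $\nabla\Psi_m$ on $\overline{U_m\cap\Omega}$, while every $y\in\Lambda\setminus\overline{V_m}$ is its own image. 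The homeomorphism $\nabla\tilde\Psi_m\colon\overline{U_m\cap\Omega}\to\overline{V_m\cap\Lambda}$ with inverse $\nabla\tilde\Psi_m^*$ is then a routine consequence of these strict convexities together with compactness.

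The main obstacle, in my view, is the global $C^1$ regularity on $\mathbb{R}^n$, i.e.\ continuous matching of $\nabla\tilde\Psi_m$ across the free boundary $\partial U_m\cap\Omega$ (and especially across its intersection with $\partial(\Omega\cap\Lambda)$). One must rule out a jump between the value $\nabla\Psi_m(x)$ read from the Brenier solution on one side and the value $x$ read from the paraboloid on the other. This is where the ball-union definition of $U_m$ and $V_m$ becomes indispensable: each free-boundary point $\bar x\in\partial U_m\cap\Omega$ comes with a transport pair $(\bar x,\bar y)$ and a ball constraint $B_{|\bar x-\bar y|}(\bar y)\subset U_m$ that pins down $\nabla\Psi_m(\bar x)$ geometrically, and a limiting argument using strict convexity of $\Psi_m^*$ forces the two one-sided gradients to coincide. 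I expect the most delicate situation to be a free-boundary point lying on $\partial(\Omega\cap\Lambda)$, where the geometry can degenerate; handling it cleanly will likely require a careful use of $\nabla\Psi_m=\operatorname{Id}$ on the common region together with the boundary behavior of strictly convex Brenier solutions.
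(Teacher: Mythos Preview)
This theorem is not proved in the present paper; it is quoted verbatim from Figalli's work as \cite[Theorem 4.10]{Fi} and used as a black box throughout. There is therefore no proof in this paper to compare your proposal against.

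That said, your sketch is essentially the strategy Figalli employs: build $\tilde\Psi_m$ by gluing, on the dual side, the Legendre transform $\Psi_m^*$ over $\overline{V_m\cap\Lambda}$ with the paraboloid $\tfrac12|y|^2-C_0$ over $\overline\Lambda\setminus V_m$, and then take the Legendre transform back. You also correctly isolate the crux, namely global $C^1$ regularity across the free boundary, and the mechanism you propose (the ball-union definition of $U_m,V_m$ pinning down $\nabla\Psi_m$ at free-boundary points) is the right one. One caution: your remark that ``$U_m$ and $V_m$ are convex (a geometric observation of Caffarelli--McCann)'' is not quite right in the overlapping setting. As defined in this paper (following Figalli), $U_m=(\Omega\cap\Lambda)\cup\bigcup B_{|\bar x-\bar y|}(\bar y)$ is a union of balls together with the common region and need not be convex; indeed, the paper repeatedly works with the set $\partial_{nc}U_m$ of non-locally-convex points of $\overline{\Omega\cap U_m}$. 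The interior $C^{1,\alpha}_{\mathrm{loc}}$ regularity on $U_m\cap\Omega$ therefore does not come from ``$U_m$ convex'' but rather from Caffarelli's Alexandrov theory applied locally, using that the Monge--Amp\`ere measure is bounded between multiples of Lebesgue on the active region and that $\nabla\Psi_m$ maps into the convex set $\overline\Lambda$.
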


\begin{thm} \label{Fii2} \cite[Theorem 4.11]{Fi} Assume the setup in Theorem \ref{Fii1}. Then $(\partial U_m \cap \Omega) \setminus \partial \Lambda$ is locally a $C^1$ surface, and the vector $\nabla \tilde \Psi_m(x)-x$ is different from zero, and gives the direction of the inward normal to $U_m$.  
\end{thm}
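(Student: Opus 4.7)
The plan is to realize $(\partial U_m \cap \Omega)\setminus \partial \Lambda$ as a $C^1$ level set of the $C^1$ function $w(x):=\tilde \Psi_m(x)-|x|^2/2$ and then apply the implicit function theorem. Theorem \ref{Fii1} furnishes $\tilde \Psi_m \in C^1(\mathbb{R}^n)$, so $w \in C^1(\mathbb{R}^n)$ with $\nabla w(x)=\nabla \tilde \Psi_m(x)-x$.

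The first step is to show $\nabla w(\bar x)\neq 0$ for every $\bar x \in (\partial U_m \cap \Omega)\setminus \partial \Lambda$, which will also yield the non-vanishing assertion of the theorem. Since $\Omega \cap \Lambda \subset U_m$ is open, $\bar x \in \Lambda$ would place $\bar x$ in the interior of $U_m$, contradicting $\bar x \in \partial U_m$; combined with $\bar x \notin \partial \Lambda$, this gives $\bar x \notin \overline \Lambda$. By Theorem \ref{Fii1}, $\nabla \tilde \Psi_m(\bar x) \in \nabla \tilde \Psi_m(\mathbb{R}^n)=\overline \Lambda$, so $\nabla \tilde \Psi_m(\bar x)\neq \bar x$ and hence $\nabla w(\bar x)\neq 0$.

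To extract the $C^1$ surface I would identify $\partial U_m$ locally with the level set $\{w=w(\bar x)\}$ near $\bar x$. On one side, approximating $\bar x$ by $x_n \in F_m$ with $y_n := \nabla \tilde \Psi_m(x_n) \to \bar y := \nabla \tilde \Psi_m(\bar x)$ (by continuity of $\nabla \tilde \Psi_m$), the balls $B_{|x_n-y_n|}(y_n) \subset U_m$ built into the definition of $U_m$ pass in the limit to $B_{|\bar x-\bar y|}(\bar y) \subset \overline{U_m}$ with $\bar x$ on the boundary; the inward normal to this ball at $\bar x$ points in the direction $\bar y-\bar x=\nabla w(\bar x)$. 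To convert this one-sided interior-ball information into the local identity $U_m=\{w>\lambda\}$, with $\lambda=w(\bar x)$, I would invoke the variational/double-obstacle characterization of the Brenier solution to the partial transport: the mass constraint in (\ref{mini}) carries a Lagrange multiplier $\lambda$, and $\tilde \Psi_m$ can be characterized so that locally near $\bar x \in \Omega$ one has $U_m=\{w>\lambda\}$. The implicit function theorem applied to $w$ at $\bar x$ then presents $\partial U_m$ near $\bar x$ as a $C^1$ hypersurface whose inward unit normal is precisely $\nabla w(\bar x)/|\nabla w(\bar x)|=(\nabla \tilde \Psi_m(\bar x)-\bar x)/|\nabla \tilde \Psi_m(\bar x)-\bar x|$.

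The hard part will be the two-sided local identification $U_m=\{w>\lambda\}$: the interior ball condition by itself provides only a one-sided tangent to $\partial U_m$ and, absent further input, allows exterior ``cusps'' a priori. Ruling these out genuinely uses the variational/obstacle structure of the partial transport together with the global $C^1$ regularity of $\tilde \Psi_m$ from Theorem \ref{Fii1}; once this identification is secured, the implicit function theorem handles the remaining routine work.
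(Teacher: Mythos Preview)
The present paper does not contain a proof of this statement: Theorem~\ref{Fii2} is quoted from Figalli \cite[Theorem 4.11]{Fi} and used as a black box. There is therefore no proof here against which to compare your attempt.

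On the merits of your sketch: the non-vanishing of $\nabla w(\bar x)=\nabla\tilde\Psi_m(\bar x)-\bar x$ is correct and argued exactly right, and the interior-ball picture you extract from the definition of $U_m$ is the heart of the matter. The step you flag as hard---upgrading the one-sided interior tangent ball to a genuine $C^1$ graph---is indeed where the work lies, but it need not go through the level-set/Lagrange-multiplier identification $U_m=\{w>\lambda\}$ that you propose. A more direct route, and the one Figalli actually takes, is to observe that the interior-ball condition at every point of $\partial U_m\cap\Omega$ makes this hypersurface semiconvex (hence locally a Lipschitz graph with a.e.\ outer normal), and that the outer normal at each such point must be $(\bar x-\nabla\tilde\Psi_m(\bar x))/|\bar x-\nabla\tilde\Psi_m(\bar x)|$; since $\nabla\tilde\Psi_m$ is continuous by Theorem~\ref{Fii1}, the normal extends continuously and the Lipschitz graph is $C^1$. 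Your level-set approach would also succeed---in Figalli's framework there is a Lagrange multiplier and the identification you want does hold away from $\overline\Lambda$---but invoking it is heavier than necessary, and you would still have to justify it rather than assume it.
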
 

\begin{rem} \label{c1rem}
If $x \in (\partial U_m \cap \partial \Omega) \setminus \partial (\Omega \cap \Lambda)$, then $\nabla \tilde \Psi_m(x) \neq x$ and the same argument used to prove Theorem \ref{Fii2} shows that $\overline{\partial U_m \cap \Omega}$ is locally $C^1$ away from $\partial(\Omega \cap \Lambda)$.  
\end{rem}

In our study, we shall also make frequent use of the fact that free boundary never maps to free boundary. This is summed up in the following proposition \cite[Proposition 4.13]{Fi}:

\begin{prop} (Free boundary never maps to free boundary) \label{fbd}
Assume the setup in Theorem \ref{Fii1} and let $\tilde \Psi_m$ be the corresponding extension of $\Psi_m$. Then\\ 
(a) if $x \in \partial U_m \cap \Omega$, then $\nabla \tilde \Psi_m(x) \notin \overline{\partial V_m \cap \Lambda}$;\\
(b) if $x \in \partial U_m \cap \partial \Omega$, then $\nabla \tilde \Psi_m(x) \notin \partial V_m \cap \Lambda$. 
\end{prop}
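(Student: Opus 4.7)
My plan is to argue by contradiction using the interior-ball characterization of $U_m$ and $V_m$ built into Figalli's construction, combined with the homeomorphism property of $\nabla \tilde \Psi_m$ provided by Theorem \ref{Fii1}. For part (a), suppose $x \in \partial U_m \cap \Omega$ and set $y := \nabla \tilde \Psi_m(x)$; assume toward contradiction that $y \in \overline{\partial V_m \cap \Lambda}$. The first thing to observe is that $x \neq y$: since $\Omega \cap \Lambda$ is open and is contained in both $U_m$ and $V_m$, each of its points is interior to $U_m$ and to $V_m$, so $x \in \partial U_m \cap \Omega$ forces $x \notin \Omega \cap \Lambda$ and hence $x \notin \Lambda$, whereas $y \in \overline{\Lambda}$.

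Next, using the continuity of $\nabla \tilde \Psi_m$ on $\overline{U_m \cap \Omega}$ from Theorem \ref{Fii1}, I approximate $(x,y)$ by pairs $(x_k, y_k) \in \Gamma_m$ with $x_k \in F_m \cap D_{\nabla \Psi_m}$, $y_k = \nabla \Psi_m(x_k)$, and $(x_k, y_k) \to (x, y)$. The definition of $U_m$ and $V_m$ immediately furnishes the interior balls
\[
B_{|x_k - y_k|}(y_k) \subset U_m, \qquad B_{|x_k - y_k|}(x_k) \subset V_m,
\]
and passing to the limit gives $\overline{B_r(y)} \subset \overline{U_m}$ and $\overline{B_r(x)} \subset \overline{V_m}$, where $r := |x-y| > 0$, with $x$ on the boundary of the first ball and $y$ on the boundary of the second.

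The contradiction is produced by using these two balls simultaneously. The midpoint $z := \frac{1}{2}(x+y)$ lies in the open portion of each ball, so $z \in U_m \cap V_m$; by a symmetric application of Theorem \ref{Fii2} the inward normal to $U_m$ at $x$ is parallel to $y-x$ and the inward normal to $V_m$ at $y$ is parallel to $x-y$. Using that $\nabla \tilde \Psi_m : \overline{U_m \cap \Omega} \to \overline{V_m \cap \Lambda}$ is a homeomorphism, one shows that the image of a sufficiently small one-sided neighborhood of $x$ inside $U_m \cap \Omega$ (obtained by pushing into $U_m$ along the direction $y-x$ and staying inside the interior balls of the approximating pairs) covers an open neighborhood of $y$ in $V_m$, contradicting $y \in \partial V_m$. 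For part (b), the same midpoint/homeomorphism argument applies; the only change is that $x$ is now allowed to lie on $\partial \Omega$, which is exactly why (b) excludes only $y \in \partial V_m \cap \Lambda$ (i.e., $y$ strictly interior to $\Lambda$) rather than its closure.

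The main obstacle I anticipate is making the \emph{image covers an open neighborhood of $y$} step fully rigorous: the limiting interior balls are only tangent to the free boundary rather than strictly contained in the interior of $U_m \cap \Omega$, so one must work carefully with the approximating pairs $(x_k, y_k)$ and exploit the strict convexity of $\Omega$ and $\Lambda$ to guarantee that nondegenerate one-sided neighborhoods of $x$ in $U_m \cap \Omega$ map under $\nabla \tilde \Psi_m$ onto honest neighborhoods of $y$ in $V_m \cap \Lambda$. Once this local surjectivity is in hand, the incompatibility between $y \in \partial V_m$ and $y$ lying in an open subset of $V_m$ closes the argument.
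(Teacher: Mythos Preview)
This proposition is not proved in the present paper at all: it is quoted verbatim as \cite[Proposition 4.13]{Fi}, so there is no ``paper's own proof'' to compare with. What I can do is assess your sketch against Figalli's argument and against correctness.

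Your setup is fine: the observation $x\neq y$, the approximation by pairs in $\Gamma_m$, and the limiting interior balls $B_r(y)\subset U_m$, $B_r(x)\subset V_m$ with $r=|x-y|>0$ are exactly the right starting point. The problem is the step you yourself flag as the main obstacle. You want the image under $\nabla\tilde\Psi_m$ of a one-sided neighborhood of $x$ in $U_m\cap\Omega$ to be a \emph{full} neighborhood of $y$; but the homeomorphism only guarantees that a relative neighborhood of $x$ in $\overline{U_m\cap\Omega}$ maps to a relative neighborhood of $y$ in $\overline{V_m\cap\Lambda}$, and since by hypothesis $y\in\partial(V_m\cap\Lambda)$ this is again one-sided, not full. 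There is no mechanism (strict convexity of the domains does not help here) turning a half-neighborhood into a full one, so this step does not close.

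The actual argument bypasses ``local surjectivity'' entirely and instead exhibits a \emph{single} interior ball of $V_m$ containing $y$ strictly. For part (a): since $x\in\Omega$, for small $t>0$ the point $z_t:=x+t(y-x)$ lies in $U_m\cap\Omega$; set $v_t:=\nabla\tilde\Psi_m(z_t)$ and use (an approximation by $\Gamma_m$ and) the definition of $V_m$ to get $B_{|z_t-v_t|}(z_t)\subset V_m$. Monotonicity of $\nabla\tilde\Psi_m$ gives $\langle v_t-y,\,y-x\rangle\geq 0$, and an elementary expansion then yields
\[
|z_t-v_t|^2 \;\geq\; |y-z_t|^2 + |v_t-y|^2.
\]
Injectivity (the homeomorphism) forces $v_t\neq y$ for $t>0$, so the inequality is strict and $y\in B_{|z_t-v_t|}(z_t)\subset V_m$, contradicting $y\in\partial V_m$. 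Part (b) is the symmetric computation run from the $\Lambda$ side: since now $y\in\Lambda$, take $z_t$ close to $y$, set $w_t:=\nabla\tilde\Psi_m^*(z_t)$, and conclude $x\in B_{|w_t-z_t|}(z_t)\subset U_m$. So the missing idea is monotonicity plus injectivity producing a strict ball inclusion, not any surjectivity statement about images of neighborhoods.
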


\noindent Moreover, we will also need the fact that the common region $\Omega \cap \Lambda$ is contained in the active regions \cite[Remark 3.4]{Fi}:

\begin{rem} \label{cmr}
$$\Omega \cap \Lambda \subset U_m \cap \Omega, \hskip .1in \Omega \cap \Lambda \subset V_m \cap \Lambda.$$ 
\end{rem}

In order to analyze the singular set for the free boundaries, we recall two more sets which will play a crucial role in the subsequent analysis; cf. \cite[Equations (7.1) and (7.2)]{CM}. The nonconvex part of the free boundary $\overline{\partial U_m \cap \Omega}$ is the closed set 
\begin{equation} \label{nc}
\partial_{nc} U_m:=\{x \in \overline{\Omega \cap U_m}: \Omega \cap U_m \hskip.1in \text{fails to be locally convex at $x$}\}.
\end{equation}  
Moreover, the nontransverse intersection points are defined by 

\begin{equation} \label{nt}
\partial_{nt} \Omega:= \{x \in \partial \Omega \cap \overline{\Omega \cap \partial U_m}: \langle \nabla \tilde \Psi_m(x)-x, z-x\rangle \leq 0 \hskip .1in \forall z \in \Omega \}.
\end{equation}
By duality, $\partial_{nc} V_m$ and $\partial_{nt} \Lambda$ are similarly defined. 
\subsection{Tools}
Next, we collect several well-known results from the literature of convex analysis and geometric measure theory which will be useful in our subsequent analysis. The following lemma is a slight adaptation of such a result \cite[Proposition 10.9]{Ma}. Its corollary follows by a standard covering argument.  

\begin{lem} \label{geom} 
Let $M \subset \mathbb{R}^m$ be compact and suppose $\pi$ is an $(m-1)$-dimensional hyperplane. If there exist $\delta>0$ and $\alpha>0$ such that for all $x \in M$, $$(B_{\delta}(x) \cap M) \cap (x+C_\alpha(\pi)) = \emptyset,$$ then there exist $N \in \mathbb{N}$ and Lipschitz functions $f_i: \mathbb{R}^{m-1} \rightarrow \mathbb{R}^m$ where $i \in \{1,\ldots,N\}$, such that $M=\bigcup_{i=1}^N f_i(K_i)$, with $K_i \subset \mathbb{R}^{m-1}$ compact. In particular, $H^{m-1}(M) < \infty$. 
\end{lem}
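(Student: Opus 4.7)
The plan is to use the cone-avoidance hypothesis to realize $M$ locally as a Lipschitz graph over the hyperplane $\pi$, and then patch finitely many such local pieces together using compactness of $M$.

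First, I would extract a finite subcover of $M$ by open balls $B_{\delta/2}(x_i)$, $i=1,\ldots,N$, with $x_i \in M$, and set $M_i := M \cap \overline{B_{\delta/2}(x_i)}$, a compact set. Given any two points $x,y \in M_i$, we have $|x-y| < \delta$. Applying the hypothesis at the point $x$ yields $y-x \notin C_\alpha(\pi)$, and applying it at $y$ yields $x-y \notin C_\alpha(\pi)$. In view of Definition \ref{cone}, these two facts together imply the bilateral estimate
$$|P_{\pi^\perp}(y-x)| \leq \alpha \, |P_\pi(y-x)|. \qquad (\star)$$

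Second, $(\star)$ shows that $P_\pi$ is injective on $M_i$: if $P_\pi(x) = P_\pi(y)$ for $x,y \in M_i$, then $|P_{\pi^\perp}(x-y)| = 0$, whence $x=y$. Setting $K_i := P_\pi(M_i) \subset \pi \cong \mathbb{R}^{m-1}$, which is compact as the continuous image of $M_i$, the inverse map $g_i : K_i \to M_i$ is well-defined and satisfies, for any $\xi, \eta \in K_i$,
$$|g_i(\xi) - g_i(\eta)|^2 = |P_\pi(g_i(\xi)-g_i(\eta))|^2 + |P_{\pi^\perp}(g_i(\xi)-g_i(\eta))|^2 \leq (1+\alpha^2)|\xi - \eta|^2$$
by $(\star)$. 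Hence $g_i$ is Lipschitz with constant $\sqrt{1+\alpha^2}$.

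Third, I would extend $g_i$ to a Lipschitz map $f_i : \mathbb{R}^{m-1} \to \mathbb{R}^m$ with the same Lipschitz constant via Kirszbraun's theorem (or coordinatewise McShane extension). Then $M_i = f_i(K_i)$ and consequently $M = \bigcup_{i=1}^N f_i(K_i)$, which is the desired decomposition. The finiteness of $\mathcal{H}^{m-1}(M)$ then follows from the standard Lipschitz-image estimate $\mathcal{H}^{m-1}(f_i(K_i)) \leq (1+\alpha^2)^{(m-1)/2}\, \mathcal{H}^{m-1}(K_i)$ together with the fact that each $K_i$, being a bounded subset of $\mathbb{R}^{m-1}$, has finite $\mathcal{H}^{m-1}$-measure. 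The only substantive point requiring care is the passage from the (one-sided) cone-avoidance hypothesis to the symmetric estimate $(\star)$, achieved by invoking the hypothesis at both endpoints; once $(\star)$ is in hand, the rest of the argument is routine Lipschitz-graph machinery.
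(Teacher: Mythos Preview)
Your proof is correct and follows exactly the standard cone--avoidance $\Rightarrow$ Lipschitz--graph argument that the paper is invoking (the paper does not actually give its own proof but cites \cite[Proposition 10.9]{Ma}, where precisely this argument appears). The only cosmetic issue is that with $M_i := M \cap \overline{B_{\delta/2}(x_i)}$ two points can satisfy $|x-y|=\delta$ rather than $|x-y|<\delta$; taking radius $\delta/3$ (or using open balls for $M_i$) removes this without affecting anything else.
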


\begin{cor} \label{corgeom}
Let $M \subset \mathbb{R}^m$ be compact and suppose that for each $x \in M$, $\pi(x)$ is an $(m-1)$-dimensional hyperplane. If there exist $\delta>0$ and $\alpha>0$ such that for all $x \in M$, $$(B_{\delta}(x) \cap M) \cap (x+C_\alpha(\pi(x))) = \emptyset,$$ then there exist $D \in \mathbb{N}$ and Lipschitz functions $f_i: \mathbb{R}^{m-1} \rightarrow \mathbb{R}^m$ where $i \in \{1,\ldots,D\}$, such that $M=\bigcup_{i=1}^D f_i(K_i)$, with $K_i \subset \mathbb{R}^{m-1}$ compact. In particular, $H^{m-1}(M) < \infty$.    
\end{cor}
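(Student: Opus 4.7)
The plan is to reduce Corollary \ref{corgeom} to Lemma \ref{geom} by partitioning $M$ into finitely many closed pieces, on each of which the cone condition holds with respect to a single \emph{fixed} hyperplane in place of the variable $\pi(x)$. The first ingredient is a perturbation estimate for cones: there exist $\epsilon=\epsilon(\alpha)>0$ and $\alpha'>\alpha$ such that whenever two $(m-1)$-planes $\pi,\pi'$ are within distance $\epsilon$ in the Grassmannian $G(m-1,m)$ (equivalently, whenever their unit normals are $\epsilon$-close in $S^{m-1}$), one has $C_{\alpha'}(\pi')\subset C_\alpha(\pi)$. Writing the cones via Definition \ref{cone} in terms of projections onto a common normal, this reduces to a direct computation: a small rotation of the cone's axis combined with a slight narrowing of its aperture preserves inclusion.

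Since $G(m-1,m)$ is compact, I cover it by finitely many open sets $U_1,\ldots,U_D$ of diameter less than $\epsilon$, with representatives $\pi_j\in U_j$, and set $M_j:=\{x\in M:\pi(x)\in U_j\}$, so that $M=\bigcup_{j=1}^D M_j$. For each $x\in M_j$, the hypothesis together with the cone inclusion yields
\[
(B_\delta(x)\cap M)\cap (x+C_{\alpha'}(\pi_j))=\emptyset.
\]
The sets $M_j$ need not be closed (since $x\mapsto\pi(x)$ may be discontinuous), so I pass to the closures $\overline{M_j}\subset M$, which are compact, and verify that the same cone condition persists on $\overline{M_j}$ with ambient set still taken to be $M$. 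Indeed, if $x\in\overline{M_j}$ and $y\in B_\delta(x)\cap\overline{M_j}\setminus\{x\}$ with $y-x\in C_{\alpha'}(\pi_j)$, then approximating $x$ and $y$ by sequences $x_n,y_n\in M_j$ and exploiting that $C_{\alpha'}(\pi_j)$ is open gives, for $n$ large, $y_n\in (B_\delta(x_n)\cap M)\cap(x_n+C_{\alpha'}(\pi_j))$, contradicting the cone condition at $x_n\in M_j$.

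At this point Lemma \ref{geom} applies to each compact set $\overline{M_j}$ with the fixed hyperplane $\pi_j$ and constants $\delta,\alpha'$, producing a finite family of Lipschitz parametrizations $f_{j,i}\colon K_{j,i}\to\overline{M_j}$ with $K_{j,i}\subset\mathbb{R}^{m-1}$ compact. Concatenating over $j$ gives the asserted finite Lipschitz cover of $M=\bigcup_{j}\overline{M_j}$ (after relabeling the total count as $D$), and $H^{m-1}(M)<\infty$ is then immediate from the usual Lipschitz image estimate for Hausdorff measure.

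The main technical point is the cone perturbation estimate and the verification that the uniform cone condition passes to the closure $\overline{M_j}$ (using openness of $C_{\alpha'}(\pi_j)$ and compactness of $M$); both are elementary, and I do not anticipate any serious obstacle beyond setting up the finite cover of the Grassmannian and carefully executing the limiting argument.
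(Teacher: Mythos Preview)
Your proposal is correct and is precisely the ``standard covering argument'' the paper alludes to (the paper gives no further details beyond that phrase). One minor point: since the cone $C_\alpha(\pi)$ in Definition~\ref{cone} is one-sided (it depends on a choice of positive direction for $\pi^\perp$), the relevant parameter space is really $S^{m-1}$ rather than the unoriented Grassmannian $G(m-1,m)$; you already note this equivalence, so just make sure the finite cover is taken in $S^{m-1}$ so that nearby hyperplanes have nearby (not antipodal) normals.
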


The next Lemma quantifies the geometric decay of the sections of an arbitrary convex function whose Monge-Amp\`{e}re measure satisfies a doubling property (see Definition \ref{as}). The proof may be found in Caffarelli and McCann \cite[Lemma 7.6]{CM}.

\begin{lem} \label{c}
Given $0\leq t<\bar t \leq 1$ and $C>0$, there exists $s_0=s_0(t,\bar t, \delta,n) \in (0,1)$, such that whenever $Z_{\epsilon}$ is a fixed section centered at $z_0\in$ $X:=spt M_{\Psi}$ of a convex function $\Psi:\mathbb{R}^n \rightarrow (-\infty, \infty]$ whose Monge-Amp\`{e}re measure satisfies the doubling condition 
$$M_{\Psi}[Z_{s\epsilon}(z)]\leq C M_{\Psi}[\frac{1}{2} \cdot Z_{s\epsilon}(z)]$$ for all $s\in [0,1]$ and all $z$ in the convex set $X \cap Z_{\epsilon}(z_0)$, then 
$$z \in X \cap t\cdot Z_{\epsilon}(z_0) \implies Z_{s\epsilon}(z) \subset \bar t \cdot Z_{\epsilon}(z_0), \hskip .3in \forall s\leq s_0.$$   
\end{lem}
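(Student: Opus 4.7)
The plan is to prove the lemma by an affine renormalization combined with a compactness/contradiction argument, in the spirit of Caffarelli's regularity theory for the Monge-Amp\`ere equation.

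First I would normalize the outer section. By Caffarelli's centering lemma together with John's theorem on ellipsoids, there is an affine transformation $T$ so that $T(z_0)=0$ and $B_1 \subset T(Z_\epsilon(z_0)) \subset B_n$. Passing to $\tilde \Psi := \epsilon^{-1}(\Psi \circ T^{-1} - \ell)$ for a suitable affine function $\ell$, the new section becomes $Z_1^{\tilde\Psi}(0)$. Because barycentric dilations, sections, and the support $X$ transform equivariantly, and because the doubling hypothesis on $M_\Psi$ is preserved with the same constant $C$ under affine pushforward (the Jacobian factor cancels on both sides), it suffices to prove the statement in this normalized setting. This normalization also rescales $s\epsilon$ to $s$, so I may take $\epsilon = 1$.

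Next, suppose the lemma fails. Then there exist sequences $s_k \to 0^+$, normalized convex functions $\Psi_k$ with $B_1 \subset Z_1^{\Psi_k}(0) \subset B_n$ and $M_{\Psi_k}$ satisfying the doubling inequality with constant $C$, points $z_k \in X_k \cap t \cdot Z_1^{\Psi_k}(0)$, and $y_k \in Z_{s_k}^{\Psi_k}(z_k) \setminus \bar t \cdot Z_1^{\Psi_k}(0)$. A standard compactness argument for convex functions with controlled sections produces (along a subsequence) a limit $\Psi_\infty$ to which $\Psi_k$ converges locally uniformly; correspondingly $Z_1^{\Psi_k}(0) \to Z_1^{\Psi_\infty}(0)$ in Hausdorff distance, $M_{\Psi_k} \rightharpoonup M_{\Psi_\infty}$ weakly-$\star$, $z_k \to z_\infty \in X_\infty \cap t \cdot Z_1^{\Psi_\infty}(0)$, and $y_k \to y_\infty \notin \operatorname{int}(\bar t \cdot Z_1^{\Psi_\infty}(0))$. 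The limit $\Psi_\infty$ still satisfies the doubling hypothesis with constant $C$ on $X_\infty \cap Z_1^{\Psi_\infty}(0)$.

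The crux is to show that $\operatorname{diam}(Z_s^{\Psi_k}(z_k)) \to 0$ uniformly as $s \to 0^+$. This is where doubling enters decisively: by applying the hypothesis $M_\Psi[Z_s(z)] \leq C \, M_\Psi[\tfrac{1}{2}\cdot Z_s(z)]$ iteratively along a telescoping chain of centered sections $Z_s(z) \supset \tfrac12\cdot Z_s(z) \supset \tfrac14\cdot Z_s(z) \supset \cdots$, and John-normalizing each one, one extracts a geometric decay of both Monge-Amp\`ere mass and diameter as $s$ decreases, with rate depending only on $C$ and $n$. Consequently $y_\infty = z_\infty$ in the limit, so $z_\infty \in t \cdot Z_1^{\Psi_\infty}(0) \subset \operatorname{int}(\bar t \cdot Z_1^{\Psi_\infty}(0))$ (using $t < \bar t$), contradicting $y_\infty \notin \operatorname{int}(\bar t \cdot Z_1^{\Psi_\infty}(0))$. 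The uniformity of the resulting threshold $s_0$ in $t, \bar t, C, n$ is a direct consequence of the compactness setup.

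The main obstacle I anticipate is precisely the shrinking step: without strict convexity of $\Psi$, sections centered at $z \in X$ could a priori fail to collapse to $\{z\}$ as $s \to 0$. The doubling hypothesis (imposed on the full convex set $X \cap Z_\epsilon(z_0)$, not just at $z_0$) is the exact mechanism that rules this out, since a nondegenerate flat part of $\Psi$ containing $z$ would force the $M_\Psi$-mass of $Z_s(z)$ to vanish while that of $\tfrac12 \cdot Z_s(z)$ stays comparable, violating the inequality. Converting this obstruction into a quantitative modulus of decay, uniform across the normalized family, is the technical heart of the proof.
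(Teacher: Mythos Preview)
The paper does not supply its own proof of this lemma; it simply cites \cite[Lemma 7.6]{CM}. So there is no in-paper argument to compare against, and your proposal must stand on its own.

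Your normalization step is fine and standard. The compactness framework is plausible in outline, but the argument has a genuine gap at the step you yourself flag as the crux. Two specific problems:

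\emph{The telescoping argument is misdirected.} Iterating the hypothesis $M_\Psi[Z_s(z)] \le C\, M_\Psi[\tfrac12\cdot Z_s(z)]$ yields $M_\Psi[Z_s(z)] \le C^k M_\Psi[2^{-k}\cdot Z_s(z)]$, which is a \emph{lower} bound on the mass of the small dilate, not an upper bound; this does not by itself produce decay of mass or diameter. Moreover, $\tfrac12\cdot Z_s(z)$ is a barycentric dilate, not a centered section, so the doubling hypothesis cannot be reapplied to it directly; one must first compare dilates to smaller centered sections, and that comparison is exactly the engulfing statement you are trying to prove.

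\emph{The heuristic in your last paragraph is backwards.} If $\Psi$ were flat on a segment through $z$ and $M_\Psi(Z_s(z))$ vanished, the inequality $0 \le C\cdot M_\Psi[\tfrac12\cdot Z_s(z)]$ would be trivially satisfied, not violated. The actual obstruction is more delicate: a flat piece forces the sections $Z_s(z)$ to become highly eccentric as $s\to 0$, and one must combine Aleksandrov's maximum principle (which relates $s^n$, the volume $|Z_s(z)|$, and $M_\Psi(Z_s(z))$) with the doubling hypothesis to rule this out quantitatively. That combination is the real content of the proof in \cite{CM}, and your sketch does not supply it.

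A compactness proof also faces the difficulty that weak-$*$ convergence $M_{\Psi_k}\rightharpoonup M_{\Psi_\infty}$ need not preserve the doubling inequality on sections, nor guarantee $z_\infty\in\operatorname{spt}M_{\Psi_\infty}$. The direct route in \cite{C2,CM}---Aleksandrov's estimate on the normalized outer section plus doubling to compare $M_\Psi$ on nested sections---avoids these issues and gives the quantitative $s_0$ without passing to a limit.
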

 
\begin{cor} \label{core1}
Assuming the setup in Lemma \ref{c}, we have $$Z_{s^k\epsilon}(x) \subset \bar t^k \cdot Z_{\epsilon}(x),$$ for all $s<s_0(0,\bar t)$, $\bar t \in (0,1)$ and integers $k \geq0$.  
\end{cor}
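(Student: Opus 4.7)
The plan is to prove Corollary \ref{core1} by induction on $k$, iterating the one-step decay of Lemma \ref{c} at the scale-invariant parameters $t=0$ and $\bar t$. Fix $\bar t \in (0,1)$ and let $s_0 := s_0(0,\bar t,\delta,n)$ be the constant produced by Lemma \ref{c}. The base case $k = 0$ is immediate, since $\bar t^0 \cdot Z_\epsilon(x) = Z_\epsilon(x)$.

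For the inductive step, I would assume $Z_{s^{k-1}\epsilon}(x) \subset \bar t^{k-1} \cdot Z_\epsilon(x)$. Since $x$ is by definition the barycenter of $Z_{s^{k-1}\epsilon}(x)$ and $x \in X$, one has $x \in X \cap 0 \cdot Z_{s^{k-1}\epsilon}(x)$; Lemma \ref{c}, applied to the section $Z_{s^{k-1}\epsilon}(x)$ in place of the reference section, would then give
$$Z_{s^k \epsilon}(x) \subset \bar t \cdot Z_{s^{k-1}\epsilon}(x)$$
for every $s \leq s_0$. To combine this with the inductive hypothesis I would invoke two elementary properties of barycentric dilation: if $Y_1 \subset Y_2$ are convex bodies sharing a common barycenter $c$, then $\lambda \cdot Y_1 \subset \lambda \cdot Y_2$; and $\lambda_1 \cdot (\lambda_2 \cdot Y) = (\lambda_1 \lambda_2) \cdot Y$ whenever $Y$ has barycenter $c$. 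Both are direct consequences of the definition $\lambda \cdot Y = (1-\lambda)c + \lambda Y$ and the fact that barycentric dilation preserves the barycenter. Applied to $Y_1 = Z_{s^{k-1}\epsilon}(x)$ and $Y_2 = \bar t^{k-1} \cdot Z_\epsilon(x)$, both centered at $x$, they yield
$$\bar t \cdot Z_{s^{k-1}\epsilon}(x) \subset \bar t \cdot \bigl( \bar t^{k-1} \cdot Z_\epsilon(x) \bigr) = \bar t^k \cdot Z_\epsilon(x),$$
closing the induction.

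The only point requiring care is that Lemma \ref{c} must be genuinely applicable at each stage, i.e.\ the Monge-Amp\`{e}re measure of $\Psi$ must double on sections centered in $X \cap Z_{s^{k-1}\epsilon}(x)$. This however is automatic once we know $Z_{s^{k-1}\epsilon}(x) \subset Z_\epsilon(x)$, which follows from the inductive hypothesis together with $\bar t < 1$: the doubling hypothesis on $X \cap Z_\epsilon(z_0)$ that already powers Lemma \ref{c} immediately restricts to the smaller set. Beyond this bookkeeping the argument is a clean geometric iteration of the one-step estimate, and I do not expect a genuine obstacle; the main conceptual content is simply the multiplicativity of barycentric dilation, which converts the additive iteration of the exponent of $s$ into a multiplicative decay of the dilation factor on the right-hand side.
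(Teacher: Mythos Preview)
Your argument is correct and is exactly the natural iteration the paper has in mind; the corollary is stated without proof precisely because applying Lemma \ref{c} with $t=0$ repeatedly, together with the multiplicativity of barycentric dilation about the common center $x$, yields the result immediately. Your check that the doubling hypothesis and the convexity of $X\cap Z_{s^{k-1}\epsilon}(x)$ persist at each stage (via $Z_{s^{k-1}\epsilon}(x)\subset \bar t^{\,k-1}\cdot Z_\epsilon(x)\subset Z_\epsilon(x)$) is the only subtlety, and you handle it correctly.
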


\noindent The following theorem of Straszewicz establishes a connection between exposed and extreme points of a closed convex set \cite[Theorem 18.6]{Ro}. 

\begin{thm} \label{Str}
For any closed convex set $Z\subset \mathbb{R}^n$, the set of exposed points of $Z$ is a dense subset of the set of extreme points of $Z$. 
\end{thm}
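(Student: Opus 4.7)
The plan is to fix an extreme point $e \in Z$ and $\eps > 0$, and construct an exposed point of $Z$ within distance $2\eps$ of $e$; this yields the density claim, while the reverse inclusion (every exposed point is extreme) is immediate from the definitions. I would first reduce to $Z$ compact by intersecting with a large closed ball centered at $e$; the subtlety here is that an exposing hyperplane for the truncation need not expose in $Z$, but the separating direction $v$ constructed below lies in the normal cone at $e$ and hence stays bounded on all of $Z$, which allows the argument to transfer back.

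Assuming $Z$ compact convex, set $K := \operatorname{conv}(Z \setminus B_\eps(e))$; this is compact by Carath\'eodory applied to the compact set $Z \setminus B_\eps(e)$. I claim $e \notin K$: otherwise $e = \sum_{i=0}^n \lambda_i s_i$ with $s_i \in Z \setminus B_\eps(e)$, $\lambda_i \geq 0$, $\sum \lambda_i = 1$, and some $\lambda_0 > 0$. If $\lambda_0 = 1$ then $s_0 = e$, contradicting $s_0 \notin B_\eps(e)$; otherwise rewriting $e = \lambda_0 s_0 + (1-\lambda_0)t$ with $t \in Z$, extremity of $e$ forces $s_0 = e$, the same contradiction. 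Hahn-Banach separation then yields $v \in \R^n$ and $c \in \R$ with $\langle v, e\rangle > c \geq \langle v, z\rangle$ for $z \in K$, so $\{z \in Z : \langle v, z\rangle > c\} \subset B_\eps(e)$, and in particular the face $F(v) := \operatorname{argmax}_Z \langle v, \cdot\rangle$ is a nonempty subset of $B_\eps(e)$.

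Finally, I would invoke Rademacher's theorem: the support function $h_Z(u) := \sup_{z\in Z}\langle u,z\rangle$ is convex on $\R^n$, so differentiable almost everywhere, and at each such $u$ the face $F(u) = \{\grad h_Z(u)\}$ is a singleton whose unique element is by definition an exposed point of $Z$. Choose $u_k \to v$ with $h_Z$ differentiable at each $u_k$ and set $\{p_k\} := F(u_k)$; compactness gives a subsequential limit $p_k \to p \in Z$, and passing to the limit in $\langle u_k, p_k\rangle \geq \langle u_k, z\rangle$ for $z \in Z$ shows $p \in F(v) \subset B_\eps(e)$, whence $p_k$ is eventually within $2\eps$ of $e$ as required. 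The step I expect to be most delicate is the initial reduction to the compact case: $\operatorname{conv}(Z \setminus B_\eps(e))$ need not be closed when $Z$ is unbounded, so the Carath\'eodory/separation argument must respect the recession cone of $Z$, and arranging the separating direction to lie in the normal cone of $Z$ at $e$ is what makes the argument go through for a general closed convex $Z$.
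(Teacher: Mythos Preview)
The paper does not prove this statement; it is quoted as Straszewicz's theorem with a reference to Rockafellar \cite[Theorem 18.6]{Ro}, and is used only as a black box in the proof of Lemma~\ref{d} (applied there to the closed, line-free, but generally \emph{unbounded} set $Z_{\min}$). So there is no ``paper's proof'' to compare against. Your argument for compact $Z$ is correct and is one of the standard routes: extremity gives $e\notin K=\operatorname{conv}(Z\setminus B_\eps(e))$, strict separation of the point $e$ from the compact convex $K$ yields $v$ with $F(v)\subset B_\eps(e)$, and Rademacher on the everywhere-finite support function $h_Z$ produces nearby directions $u_k$ with $F(u_k)$ a singleton --- hence an exposed point --- subconverging into $F(v)$.

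The gap lies exactly where you flag it, and your sketch does not close it. The assertion that the separating $v$ ``lies in the normal cone at $e$'' is unjustified: from $\langle v,e\rangle>c\ge\langle v,z\rangle$ for $z\in Z\setminus B_\eps(e)$ you only get $v\in N_Z(p)$ for $p\in F(v)\subset B_\eps(e)$, not $v\in N_Z(e)$. More to the point, if you run the construction on the truncation $Z'=Z\cap\overline{B_R(e)}$, neither $v$ nor the approximating $u_k$ need lie in the barrier cone $\operatorname{dom}h_Z=\{u:h_Z(u)<\infty\}$, and an exposing hyperplane for $Z'$ at $p_k\in\operatorname{int}B_R(e)$ can fail to expose $p_k$ in $Z$ precisely when $h_Z(u_k)=+\infty$. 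The usual repair is to avoid truncation and use instead that a closed convex set with an extreme point contains no line, so its recession cone is pointed and $\operatorname{int}(\operatorname{dom}h_Z)\neq\emptyset$; one then carries out both the separation and the Rademacher approximation inside this open cone, where $h_Z$ is finite, convex, and a.e.\ differentiable. Arranging the separation step to land in this cone still requires an argument you have not supplied. Since the paper's own application is to an unbounded $Z_{\min}$, this is not a cosmetic point.
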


\noindent The next theorem of Blaschke is a classical result which states that a family of convex bodies living in a ball admits a converging subsequence in the Hausdorff topology \cite{Bl}.  
\begin{thm} \label{Bl}
The space of all convex bodies in $\mathbb{R}^n$ is locally compact with respect to the Hausdorff metric. 
\end{thm}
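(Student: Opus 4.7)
The plan is to deduce local compactness from two ingredients: (i) the space $\mathcal{K}(B_R)$ of nonempty compact subsets of a fixed closed ball $B_R \subset \mathbb{R}^n$ is itself compact in the Hausdorff metric $d_H$, and (ii) the property of being convex is preserved under $d_H$-limits. With these in hand, the theorem follows at once: any convex body $K_0$ lies inside some $B_{R-1}$, and its open $1$-neighborhood in $d_H$ is contained in $\mathcal{K}(B_R)$; by (ii) the closure of this neighborhood consists of convex bodies, and by (i) it is compact.

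To establish (i), the standard route is to verify total boundedness and completeness of $(\mathcal{K}(B_R),d_H)$. For total boundedness, fix $\epsilon>0$ and a finite $\epsilon/2$-net $\{p_1,\ldots,p_N\}\subset B_R$. For any $K\in\mathcal{K}(B_R)$, the finite set
\[
K^\epsilon:=\{p_j : \dist(p_j,K)<\epsilon/2\}
\]
satisfies $d_H(K,K^\epsilon)\leq\epsilon$, and there are only finitely many such $K^\epsilon$. For completeness, given a Cauchy sequence $(K_m)$ I would set
\[
K_\infty:=\bigcap_{n\geq 1}\overline{\bigcup_{m\geq n}K_m},
\]
verify that $K_\infty$ is nonempty and compact, and then show $d_H(K_m,K_\infty)\to 0$ by estimating the one-sided distances directly from the Cauchy property. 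Combining total boundedness with completeness yields sequential compactness of $\mathcal{K}(B_R)$.

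For (ii), suppose $K_i\to K_\infty$ in $d_H$ with each $K_i$ convex. Given $x,y\in K_\infty$ and $\lambda\in[0,1]$, Hausdorff convergence provides sequences $x_i,y_i\in K_i$ with $x_i\to x$ and $y_i\to y$. Convexity of $K_i$ yields $z_i:=\lambda x_i+(1-\lambda)y_i\in K_i$, and since $d(z_i,K_\infty)\leq d_H(K_i,K_\infty)\to 0$ and $K_\infty$ is closed, the limit $\lambda x+(1-\lambda)y$ lies in $K_\infty$.

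The principal obstacle is the careful handling of completeness in (i). One must verify that $K_\infty$ is nonempty (which follows because each $K_m$ is nonempty and a Cauchy sequence in $\mathcal{K}(B_R)$ eventually concentrates near some common point up to $\epsilon$), and that the two one-sided Hausdorff distances $\sup_{x\in K_m}\dist(x,K_\infty)$ and $\sup_{y\in K_\infty}\dist(y,K_m)$ both tend to zero; the remaining steps are routine bookkeeping.
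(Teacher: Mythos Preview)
The paper does not supply its own proof of this statement: Theorem~\ref{Bl} is quoted as a classical result (the Blaschke selection theorem) with a citation to Schneider's book, and is used only as a tool later in Lemma~\ref{d}. So there is no ``paper's proof'' to compare against.

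Your argument is the standard one and is correct. The two ingredients you isolate---compactness of $(\mathcal{K}(B_R),d_H)$ via total boundedness plus completeness, and closedness of convexity under Hausdorff limits---are exactly what is needed, and your deduction of local compactness from them is sound. The total-boundedness step and the convexity-preservation step are fine as written. For completeness, your candidate limit $K_\infty=\bigcap_n\overline{\bigcup_{m\geq n}K_m}$ is the right object; nonemptiness follows from the finite intersection property in the compact ambient ball, and the two one-sided Hausdorff estimates are indeed routine once you use the Cauchy condition to trap all $K_m$ for $m\geq N$ within $\epsilon$ of one another. One small wording point: when you say ``the closure of this neighborhood consists of convex bodies,'' what you are really using is that the set of nonempty compact convex subsets of $B_R$ is a $d_H$-closed subset of $\mathcal{K}(B_R)$ (by your step~(ii)), hence compact, so closed balls in the space of convex bodies are compact. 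With that clarification the argument is complete.
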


\section{The $C^{1,\alpha}_{loc}$ regularity theory}
\label{Section3}

In what follows, we apply the method of Caffarelli $\&$ McCann \cite[Section 7]{CM} to derive the $C^{1,\alpha}_{loc}$ regularity of the free boundary away from the common region. Unless otherwise stated, we will always assume the following on the initial data:\\

\noindent \textbf{Assumption 1:} Assume $f,g$ are bounded away from zero and infinity on strictly convex, bounded domains $\Omega \subset \mathbb{R}^n$ and $\Lambda \subset \mathbb{R}^n$, respectively.\\

\noindent Indeed, this is the main assumption of Theorem \ref{Fii1}, therefore, whenever we will employ this theorem in the statements of our results, Assumption $1$ will be implicit. We start the analysis by identifying the support of the Monge-Amp\`{e}re measure corresponding to $\tilde\Psi_m$. By using an equation from the work of Figalli \cite[Equation (4.5)]{Fi}, one may prove this result in a similar manner (in fact, almost verbatim) as was done in Caffarelli and McCann \cite[Lemma 7.2]{CM}; hence, we omit the details.
\begin{lem} \label{a}
Let $\tilde\Psi_m$ be the $C^1(\mathbb{R}^n)$ extension of $\Psi_m$ to $\mathbb{R}^n$ given by Theorem \ref{Fii1}. Then $\tilde\Psi_m$ has a Monge-Amp\`{e}re measure that is absolutely continuous with respect to Lebesgue, and there exist positive constants $c$, $C$ (depending on the initial data) so that for any Borel set $E \subset \mathbb{R}^n$,
\begin{equation}
c |E\cap (\Omega \cap U_m)| + |E\cap (\Lambda \setminus V_m)|\leq M_{\tilde\Psi_m}(E) \leq C|E\cap (\Omega \cap U_m)|+|E \cap (\Lambda \setminus V_m)|.  
\end{equation} \label{a1}
\end{lem}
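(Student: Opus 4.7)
The plan is to adapt the proof of \cite[Lemma 7.2]{CM} to the non-disjoint setting, relying on Figalli's pointwise Monge-Amp\`ere identity \cite[Equation (4.5)]{Fi} on the active region and on the explicit form of $\tilde\Psi_m$ away from it supplied by Theorem \ref{Fii1}. Since $\tilde\Psi_m \in C^1(\mathbb{R}^n)$, the subdifferential at every point reduces to the singleton $\{\nabla \tilde\Psi_m(x)\}$, so $M_{\tilde\Psi_m}(E) = \mathcal{L}^n(\nabla \tilde\Psi_m(E))$ for every Borel set $E$, and I would split $\mathbb{R}^n$ as $A_1 \sqcup A_2 \sqcup A_3$ with $A_1 := \Omega \cap U_m$, $A_2 := \Lambda \setminus \overline{V_m}$, and $A_3 := \mathbb{R}^n \setminus (A_1 \cup A_2)$, estimating the contribution of each piece separately.

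On $A_1$ the identity $\det D^2 \Psi_m(x) = f_m(x)/g_m(\nabla \Psi_m(x))$ holds a.e.\ on density points of $\{f_m > 0\}$, and Assumption 1 furnishes constants $0 < c \le C$ with
\begin{equation*}
c\,\mathcal{L}^n(E \cap A_1) \le M_{\tilde\Psi_m}(E \cap A_1) \le C\,\mathcal{L}^n(E \cap A_1),
\end{equation*}
while on $A_2$ Theorem \ref{Fii1} gives $\nabla \tilde\Psi_m = \mathrm{id}$ and hence $M_{\tilde\Psi_m}(E \cap A_2) = \mathcal{L}^n(E \cap A_2)$. The absolute continuity of $M_{\tilde\Psi_m}$ with respect to Lebesgue measure is built into both of these pieces.

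The main obstacle is to show that $\mathcal{L}^n(\nabla \tilde\Psi_m(A_3)) = 0$, for which I would use Legendre duality: the convex function $\tilde\Psi_m^*$ is differentiable at $\mathcal{L}^n$-a.e.\ $y$, and at such a point the preimage $\{x : \nabla \tilde\Psi_m(x) = y\}$ is the single point $\nabla \tilde\Psi_m^*(y)$. By Theorem \ref{Fii1}, $\nabla \tilde\Psi_m^*$ maps $\overline{V_m \cap \Lambda}$ homeomorphically onto $\overline{U_m \cap \Omega}$ and equals the identity on $\Lambda \setminus \overline{V_m}$, while $\nabla \tilde\Psi_m(\mathbb{R}^n) = \overline{\Lambda}$, with $\mathcal{L}^n(\partial \Lambda) = 0$ by strict convexity; hence the unique preimage of a.e.\ $y \in \overline{\Lambda}$ lies in $A_1 \cup A_2$, forcing any $y$ hit from $A_3$ into a Lebesgue-null exceptional set. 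Combining this with the observation that $\nabla \tilde\Psi_m(A_1) \subset \overline{V_m \cap \Lambda} \subset \overline{V_m}$ and $\nabla \tilde\Psi_m(A_2) = \Lambda \setminus \overline{V_m}$ are disjoint, the three piecewise estimates assemble into both the upper and the lower inequality claimed in the lemma.
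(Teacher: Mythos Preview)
Your proposal is correct and follows essentially the same route the paper indicates: the paper omits the proof entirely, stating only that one may argue ``almost verbatim'' as in \cite[Lemma 7.2]{CM} using \cite[Equation (4.5)]{Fi}, and your decomposition $\mathbb{R}^n = A_1 \sqcup A_2 \sqcup A_3$ together with the Legendre-duality argument for the vanishing of $\mathcal{L}^n(\nabla\tilde\Psi_m(A_3))$ is precisely how that adaptation goes.

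One small caution on the $A_1$ piece: writing the pointwise Jacobian identity $\det D^2\Psi_m = f_m/(g_m\circ\nabla\Psi_m)$ is the \emph{Brenier} sense of the Monge--Amp\`ere equation, and by itself does not immediately control the \emph{Alexandrov} measure $M_{\tilde\Psi_m}$ (a $C^1$ convex function can in principle carry singular Monge--Amp\`ere mass). The clean way---and this is what \cite[Lemma 7.2]{CM} actually does---is to bypass the Jacobian and use the push-forward relation $(\nabla\tilde\Psi_m)_\# f_m = g_m$ together with the injectivity of $\nabla\tilde\Psi_m$ on $\overline{U_m\cap\Omega}$ from Theorem~\ref{Fii1}: for $E\subset A_1$ one has $\int_E f_m = \int_{\nabla\tilde\Psi_m(E)} g_m$, and the two-sided bounds on $f,g$ from Assumption~1 then sandwich $|\nabla\tilde\Psi_m(E)|$ between multiples of $|E|$. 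With that adjustment your argument is complete.
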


Next, we identify a set on which the Monge-Amp\`{e}re measure corresponding to the convex function $\tilde\Psi_m$ doubles affinely (recall Definition \ref{daff}). This will be useful in quantifying the strict convexity of $\tilde\Psi_m$. 

\begin{lem} \label{b}
Let $\tilde\Psi_m$ be the $C^1(\mathbb{R}^n)$ extension of $\Psi_m$ to $\mathbb{R}^n$ given by Theorem \ref{Fii1}. Then $\tilde\Psi_m$ has a Monge-Amp\`{e}re measure $M_{\tilde \Psi_m}$ which doubles affinely (see Definition \ref{daff}) on $$X:=\overline{\Omega \cap U_m} \setminus \Big(\partial_{nc} U_m \cup \big(\overline{\partial V_m \cap \Lambda} \cap \partial (\Omega\cap \Lambda)\big)\Big).$$ Moreover, any ball $N_x=B_R(x)$ which has a convex intersection with $\Omega \cap U_m$ and is disjoint from $\Lambda \setminus V_m$  is a doubling neighborhood around $x$.   
\end{lem}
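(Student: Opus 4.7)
The plan is to leverage Lemma~\ref{a}, which identifies $M_{\tilde\Psi_m}$ as comparable to Lebesgue measure on $(\Omega \cap U_m) \cup (\Lambda \setminus V_m)$ and vanishing elsewhere. Once we locate, at each $x \in X$, a ball $N_x$ that is disjoint from $\Lambda \setminus V_m$ and meets $\Omega \cap U_m$ in a convex set $W$, the affine doubling of $M_{\tilde\Psi_m}$ will reduce to the Lebesgue comparison $\abs{Z \cap W} \lesssim \abs{\tfrac{1}{2}\cdot Z \cap W}$, which is a purely convex-geometric fact.

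First I would exhibit such a ball. The condition $x \notin \partial_{nc} U_m$ is exactly the local convexity of $\Omega \cap U_m$ at $x$, so for sufficiently small $R$, $B_R(x) \cap (\Omega \cap U_m)$ is convex. For the disjointness, it suffices to prove $x \notin \overline{\Lambda \setminus V_m}$. Arguing by contradiction, suppose $x \in \overline{\Omega \cap U_m} \cap \overline{\Lambda \setminus V_m}$. Remark~\ref{cmr} gives $\Omega \cap \Lambda \subset V_m$, and since $V_m$ is open, no point of the common region $\Omega \cap \Lambda$ can belong to $\overline{\Lambda \setminus V_m}$; hence $x \in \partial(\Omega \cap \Lambda)$. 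Approximating $x$ by sequences $y_n \in \Omega \cap U_m$ and $z_n \in \Lambda \setminus V_m$ and exploiting that segments eventually cross $\partial V_m$ inside $\overline\Lambda$, one also locates $x \in \overline{\partial V_m \cap \Lambda}$, contradicting $x \in X$.

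For the doubling estimate, fix a convex body $Z \subset N_x$ with barycenter $z \in X$. Since $z \in Z$, one has $\tfrac{1}{2}\cdot Z \subset Z \subset N_x$, and in particular neither $Z$ nor $\tfrac{1}{2} \cdot Z$ meets $\Lambda \setminus V_m$. Lemma~\ref{a} therefore gives
\begin{equation*}
M_{\tilde\Psi_m}(Z) \leq C\,\abs{Z \cap W}, \qquad M_{\tilde\Psi_m}\bigl(\tfrac{1}{2}\cdot Z\bigr) \geq c\,\abs{\tfrac{1}{2}\cdot Z \cap W}.
\end{equation*}
Openness of $N_x$ together with $z \in \overline{\Omega \cap U_m}\cap N_x$ forces $z \in \overline{W}$. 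The affine map $T(y):=\tfrac{1}{2}(y+z)$ has Jacobian $2^{-n}$ and carries $Z$ bijectively onto $\tfrac{1}{2}\cdot Z$. For $y \in W$ the point $T(y)$ is a convex combination of $y \in W$ and $z \in \overline{W}$, hence lies in $\overline{W}$; so $T(Z \cap W) \subset \bigl(\tfrac{1}{2}\cdot Z\bigr) \cap \overline{W}$. Since $W$ is open and convex, $\abs{\overline W \setminus W}=0$, and one concludes $2^{-n}\abs{Z \cap W} = \abs{T(Z \cap W)} \leq \abs{(\tfrac{1}{2}\cdot Z)\cap W}$. Combining the three displayed inequalities yields $M_{\tilde\Psi_m}(Z) \leq (2^n C/c)\, M_{\tilde\Psi_m}(\tfrac{1}{2}\cdot Z)$, which is the desired affine doubling.

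The hardest part is the disjointness step. In the Caffarelli--McCann disjoint case a separating hyperplane trivializes it, but in our overlapping setting one has to analyze several distinct modes of accumulation of $\Lambda \setminus V_m$ against $\overline{\Omega \cap U_m}$ and verify that the two sets removed in the definition of $X$, namely $\partial_{nc} U_m$ and $\overline{\partial V_m \cap \Lambda} \cap \partial(\Omega \cap \Lambda)$, are precisely the obstructions to the existence of a good neighborhood $N_x$.
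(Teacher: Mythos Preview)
Your proof is correct and follows the same strategy as the paper: establish that each $x\in X$ admits a ball $N_x$ with convex intersection $W$ with $\Omega\cap U_m$ and disjoint from $\Lambda\setminus V_m$, then reduce the doubling to the Lebesgue estimate $|Z\cap W|\le 2^n|\tfrac12\cdot Z\cap W|$ via Lemma~\ref{a}. The paper merely cites \cite[Lemma 7.5]{CM} and \cite[Lemma 2.3]{C2} for this second step, whereas you have written it out explicitly (and correctly); your disjointness argument is likewise the same as the paper's one-line observation that $\overline{\Lambda\setminus V_m}$ can touch $\overline{\Omega\cap U_m}$ only along $\overline{\partial V_m\cap\Lambda}\cap\partial(\Omega\cap\Lambda)$.
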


\begin{proof}
First, since $\partial V_m \cap \Lambda$ does not intersect $U_m\cap \Omega$ (by Remark \ref{cmr}), the only place where $\Lambda \setminus V_m$ may possibly intersect $\overline{U_m \cap \Omega}$ is on $\overline{\partial V_m \cap \Lambda} \cap \partial (\Omega \cap \Lambda)$. Now if $x \in X$, then there exists $R=R(x)>0$ for which $B_R(x) \cap (\Lambda \setminus V_m) = \emptyset$ and $W:= \Omega \cap U_m \cap B_R(x)$ is convex. With this in mind, thanks to Lemma \ref{a}, we may proceed verbatim as in \cite[Lemma 7.5]{CM} and \cite[Lemma 2.3]{C2} to prove the result. 

\end{proof}

Note that the set $X$ from the previous lemma on which $M_{\tilde \Psi_m}$ doubles affinely excludes non-locally convex points in $\overline{\Omega \cap U_m}$; since Caffarelli's regularity theory employs the doubling property, and since the active region is not necessarily convex, this suggests the existence of a potential singular set. Indeed, we will now define and prove some topological results of various sets which will naturally come up in the course of our study; these sets will be used to construct candidates for the singular set. Although seemingly technical, they have a very geometric flavor, see Figure \ref{xenon}.

\begin{dfn} \label{d1} (Components of the singular set)
Let $\tilde \Psi_m$ be as in Theorem \ref{Fii1}. Then, for $x\in \partial (\Omega \cap U_m)$ and $x \neq \nabla \tilde \Psi_m(x)$, let\\
\vskip .1in 
\noindent $L(x):=\Bigl\{\nabla \tilde \Psi_m(x)+ \frac{x-\nabla \tilde \Psi_m(x)}{|x-\nabla \tilde \Psi_m(x)|}t: t\geq0 \Bigr\};$\\
\noindent
\vskip .1in
\noindent $K:=\Bigl\{x \in \partial (\Omega \cap U_m): \nabla \tilde \Psi_m(x)\neq x,  L(x) \cap \overline{\Omega \cap U_m} \subset \partial (\overline{\Omega \cap U_m}) \Bigr\};$\\
\vskip .1in
\noindent The following two sets play a critical role in our study:\\ 
\vskip .1in
\noindent $S_1:=\nabla \tilde \Psi_m^{-1}(\partial_{nt} \Lambda) \cap K;$   \\
\vskip .1in  
\noindent $S_2:=\big (\partial U_m \cap \partial \Lambda \cap \Omega\big) \cup \big(\partial \Omega \cap \partial \Lambda \cap \{\nabla \tilde \Psi_m(z)=z\}\big) \cup (\partial V_m \cap \partial \Omega \cap \Lambda).$\\
\vskip .1in
\noindent It will prove useful for us to decompose $S_1$ into the part which touches the free boundary and the part which is disjoint from the free boundary: 
\vskip .1in 
\noindent $A_1:=S_1 \cap \partial U_m$; \\

\vskip .1in 
\noindent $A_2:=S_1\setminus \partial U_m.$\\  
\end{dfn}
\begin{figure}[h!]
\centering 
\includegraphics[scale= .5]{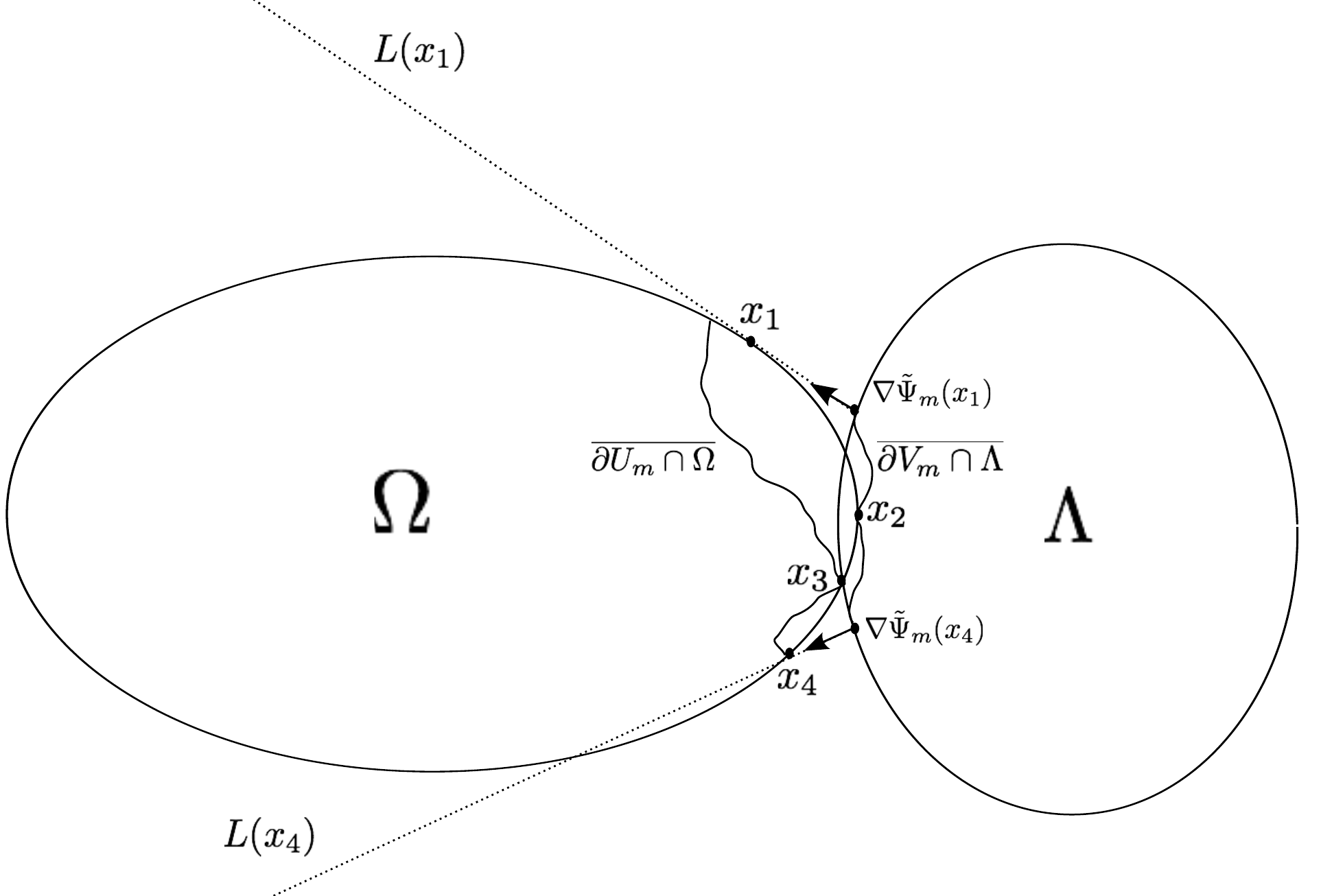}
\caption{$x_1\in A_2$; $x_4 \in A_1$; $x_2,x_3 \in S_2$.}
\label{xenon}
\end{figure}

\begin{rem} \label{r0}
If $x\in S_1 \cap \Omega$, then $x\in \partial U_m \cap \Omega$ and $\nabla \tilde \Psi_m(x) \in \partial_{nt} \Lambda \subset \overline{\partial V_m \cap \Lambda}$, a contradiction to Proposition \ref{fbd}. Hence, $S_1 \subset \partial \Omega$.    
\end{rem}

\begin{lem} \label{s01}
$\big(\partial U_m\cap \partial \Lambda \cap \overline{\Omega} \big) \cup \big(\partial V_m \cap \partial \Omega \cap \overline{\Lambda}\big) \subset \{\nabla \tilde \Psi_m(z)=\nabla \tilde \Psi_m^*(z)=z\} $.
\end{lem}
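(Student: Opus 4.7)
The plan is to prove $\partial U_m \cap \partial\Lambda \cap \overline\Omega \subset \{\nabla\tilde\Psi_m(z) = z\}$ directly; the second equality $\nabla\tilde\Psi_m^*(z) = z$ then follows from the homeomorphism property in Theorem \ref{Fii1} (since $\nabla\tilde\Psi_m^*$ is the inverse of $\nabla\tilde\Psi_m$ on the image), and the analogous inclusion for the symmetric set $\partial V_m \cap \partial\Omega \cap \overline\Lambda$ is immediate from the same argument applied to the Legendre dual, swapping the roles of $(f,\Omega,U_m,\Psi_m)$ with $(g,\Lambda,V_m,\Psi_m^*)$.

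The central idea is to approximate any $x \in \partial U_m \cap \partial\Lambda \cap \overline\Omega$ by points in the overlap $\Omega \cap \Lambda$, where the common mass is transported by the identity. Indeed, under Assumption~1 one has $f \wedge g > 0$ a.e.\ on $\Omega \cap \Lambda$, and the hypothesis $m \geq \|f \wedge g\|_{L^1}$ combined with the zero-cost identity transport for the common mass and Figalli's uniqueness \cite[Theorem 2.10]{Fi} forces $\nabla\Psi_m(z) = z$ for a.e.\ $z \in \Omega \cap \Lambda$. Since $\Omega \cap \Lambda \subset U_m \cap \Omega$ by Remark \ref{cmr} and $\tilde\Psi_m = \Psi_m$ on $U_m \cap \Omega$ by Theorem \ref{Fii1}, any sequence $z_n \to x$ drawn from this full-measure subset satisfies $\nabla\tilde\Psi_m(z_n) = z_n \to x$, so the continuity of $\nabla\tilde\Psi_m$ (also Theorem \ref{Fii1}) yields $\nabla\tilde\Psi_m(x) = x$.

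The key geometric step is therefore to verify $x \in \overline{\Omega \cap \Lambda}$. When $x \in \Omega$, this is immediate: $\Omega$ is open and $x \in \partial\Lambda$ with $\Lambda$ open, so every sufficiently small ball about $x$ lies in $\Omega$ and meets $\Lambda$. When $x \in \partial\Omega \cap \partial\Lambda$ and $\Omega \cap \Lambda \neq \emptyset$, I would invoke the standard convex-geometry fact that for any open convex set $C$ and points $a \in \overline C$, $b \in C$, the segment $(a,b]$ lies in $C$: applied to both $\Omega$ and $\Lambda$ with $a = x$ and $b$ an arbitrary point of $\Omega \cap \Lambda$, this gives $(x,b] \subset \Omega \cap \Lambda$ and hence $x \in \overline{\Omega \cap \Lambda}$. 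The main obstacle I anticipate is the degenerate edge case $\Omega \cap \Lambda = \emptyset$, in which $\overline\Omega$ and $\overline\Lambda$ can meet only at an isolated tangential point and the overlap-based approximation is unavailable. In that case one must instead approximate $x$ through the inactive target region $\Lambda \setminus \overline{V_m}$, on which $\nabla\tilde\Psi_m = \mathrm{id}$ by Theorem \ref{Fii1}; verifying that this region accumulates at $x$ uses the strict convexity of $\Lambda$ together with Proposition \ref{fbd} to prevent $V_m \cap \Lambda$ from covering a full one-sided neighborhood of $x$ inside $\Lambda$.
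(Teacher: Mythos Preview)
Your central claim---that $\nabla\Psi_m(z)=z$ for a.e.\ $z\in\Omega\cap\Lambda$---is false, and the rest of the argument collapses with it. Figalli's results give $f_m\geq f\wedge g$ and $g_m\geq f\wedge g$ (so the common region is \emph{active}), but they do \emph{not} force the optimal map to be the identity there. A concrete counterexample: take $\Omega=B_1(0)$, $\Lambda=B_1(e_1)$ in $\mathbb{R}^n$, $f=\chi_\Omega$, $g=\chi_\Lambda$, and $m=|B_1|$ (full transport). The unique optimal map is the translation $T(x)=x+e_1$, which is nowhere the identity on $\Omega\cap\Lambda$. More generally, the plan that leaves $f\wedge g$ in place and moves the residual mass can be strictly more expensive than the optimal one, so uniqueness does not select it. Your invocation of \cite[Theorem 2.10]{Fi} only gives uniqueness of the minimizer, not that the minimizer contains the diagonal transport of $f\wedge g$.

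Ironically, the idea you relegate to the ``degenerate edge case'' $\Omega\cap\Lambda=\emptyset$ is exactly the paper's argument in \emph{all} cases, and it is much shorter. For $x\in\partial V_m\cap\partial\Omega\cap\overline\Lambda$ one observes $x\in\overline{\Lambda\setminus\overline{V_m}}$ (the inactive target region), on which $\nabla\tilde\Psi_m=\mathrm{id}$ by Theorem~\ref{Fii1}; continuity then gives $\nabla\tilde\Psi_m(x)=x$. The homeomorphism between active regions (also Theorem~\ref{Fii1}, using Remark~\ref{cmr} to see $x\in\overline{\Omega\cap U_m}\cap\overline{\Lambda\cap V_m}$) yields $\nabla\tilde\Psi_m^*(x)=x$. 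The inclusion for $\partial U_m\cap\partial\Lambda\cap\overline\Omega$ follows by the symmetric argument using $\overline{\Omega\setminus\overline{U_m}}$ and $\nabla\tilde\Psi_m^*=\mathrm{id}$ there. No overlap-based approximation is needed, and no case distinction arises.
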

\begin{proof}
Suppose first that $x \in \partial V_m \cap \partial \Omega \cap \overline \Lambda$. Then, since $\partial V_m \cap \partial \Omega \cap \overline \Lambda \subset \overline{\Lambda \setminus \overline{V_m}}$, it follows that $\nabla \tilde \Psi_m(x)=x$ by Theorem \ref{Fii1}. But by Remark \ref{cmr}, we know $$\Omega \cap \Lambda \subset (\Lambda \cap V_m) \cap (\Omega \cap U_m);$$ therefore, $\partial V_m \cap \partial \Omega \cap \overline \Lambda \subset \overline{\Lambda \cap V_m} \cap \overline{\Omega \cap U_m}$, and since $\nabla \tilde \Psi_m : \overline{\Omega \cap U_m} \rightarrow \overline{\Lambda \cap V_m}$ is a homeomorphism with inverse $\nabla \tilde \Psi_m^*$,  we have $\nabla \tilde \Psi_m^*(x)=\nabla \tilde \Psi_m^*(\nabla \tilde \Psi_m(x))=x$. An entirely symmetric argument yields $\partial U_m\cap \partial \Lambda \cap \overline{\Omega} \subset \{\nabla \tilde \Psi_m(z)=\nabla \tilde \Psi_m^*(z)=z\}.$ 
\end{proof}

\begin{rem} \label{r1}
We note that if $x \in S_2$, then by Lemma \ref{s01}, $\nabla \tilde \Psi_m(x)=x$ so $\nabla \tilde \Psi_m(S_2)=S_2$.   
\end{rem}

\begin{lem} \label{s2}
Let $X_s:=S_1 \cup S_2$. Then $S_2$ and $X_s$ are compact. 
\end{lem}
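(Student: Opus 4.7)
The plan is to show boundedness and closedness of both sets; boundedness is immediate, since everything lives inside $\overline\Omega\cup\overline\Lambda$, which is bounded by Assumption 1.

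To show $S_2$ is closed, I would decompose it into its three defining pieces $T_1=\partial U_m\cap\partial\Lambda\cap\Omega$, $T_2=\partial\Omega\cap\partial\Lambda\cap\{\nabla\tilde\Psi_m(z)=z\}$, and $T_3=\partial V_m\cap\partial\Omega\cap\Lambda$, and handle each by a subsequence argument. The set $T_2$ is the intersection of three closed sets, the last being closed by the global continuity of $\nabla\tilde\Psi_m$ from Theorem \ref{Fii1}. For a limit point $x$ of a sequence in $T_1$, we have $x\in\partial U_m\cap\partial\Lambda\cap\overline\Omega$, and Lemma \ref{s01} applied to each term together with continuity of $\nabla\tilde\Psi_m$ forces $\nabla\tilde\Psi_m(x)=x$; so if $x\in\Omega$ then $x\in T_1$, while if $x\in\partial\Omega$ then $x\in T_2$. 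The case of $T_3$ is symmetric.

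For $X_s=S_1\cup S_2$, take $x_n\to x$ with $x_n\in X_s$. By passing to a subsequence, I may assume every $x_n\in S_1$ (otherwise closedness of $S_2$ finishes the job). By Remark \ref{r0}, $x_n\in\partial\Omega$, so $x\in\partial\Omega$. The set $\partial_{nt}\Lambda$ is closed—the defining nontransversality inequality is preserved in the limit by continuity of $\nabla\tilde\Psi_m^*$ on $\overline{V_m\cap\Lambda}$ (Theorem \ref{Fii1}) and closedness of $\partial\Lambda\cap\overline{\Lambda\cap\partial V_m}$—hence $\nabla\tilde\Psi_m(x)\in\partial_{nt}\Lambda\subset\partial\Lambda$. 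If $\nabla\tilde\Psi_m(x)=x$, then $x\in\partial\Omega\cap\partial\Lambda\cap\{\nabla\tilde\Psi_m(z)=z\}=T_2\subset S_2$. Otherwise $\nabla\tilde\Psi_m(x)\neq x$ and I must show $x\in K$; closedness of $\partial(\Omega\cap U_m)$ places $x$ in it, and the remaining ray condition $L(x)\cap\mathrm{int}(\overline{\Omega\cap U_m})=\emptyset$ I would prove by contradiction. If $p\in L(x)\cap\mathrm{int}(\overline{\Omega\cap U_m})$, write $p=\nabla\tilde\Psi_m(x)+t\,v(x)$ with $v(x)=(x-\nabla\tilde\Psi_m(x))/|x-\nabla\tilde\Psi_m(x)|$; the approximants $p_n:=\nabla\tilde\Psi_m(x_n)+t\,v(x_n)$ converge to $p$ (continuity of $v$ at $x$ uses $\nabla\tilde\Psi_m(x)\neq x$), so eventually $p_n$ lies in the open set $\mathrm{int}(\overline{\Omega\cap U_m})$ while $p_n\in L(x_n)$, contradicting $x_n\in K$.

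The main obstacle is the final step, preserving the ray condition defining $K$ under limits. This is precisely why the dichotomy on whether $\nabla\tilde\Psi_m(x)=x$ is forced on us: the direction vector $v$ is only continuous away from fixed points of $\nabla\tilde\Psi_m$, so the degenerate case must be absorbed into $S_2$ through the identity-set component $T_2$, while in the non-degenerate case continuity of $v$ is exactly what allows the open-set contradiction to go through.
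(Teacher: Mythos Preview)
Your proposal is correct and follows essentially the same approach as the paper: closedness of $S_2$ via its three pieces and Lemma \ref{s01}, then $\overline{S_1}\subset S_1\cup S_2$ by a dichotomy on whether $\nabla\tilde\Psi_m(x)=x$, with the ray condition in $K$ preserved by continuity of the direction vector and openness in the non-degenerate case. Your invocation of Remark \ref{r0} to place $x\in\partial\Omega$ up front is a small shortcut over the paper, which instead argues in the degenerate case via $x\in\partial(\Omega\cap U_m)\cap\partial V_m$ and Remark \ref{cmr} to reach $x\in\partial\Omega$.
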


\begin{proof}
First, we note that $X_s \subset \overline{\Omega \cup \Lambda}$, and since $\overline{\Omega \cup \Lambda}$ is bounded, it suffices to prove that $S_2$ and $X_s$ are closed. First, we prove the assertion for $S_2$: note that by Lemma \ref{s01}, 
\begin{align*}
\overline S_2 &\subset \big (\partial U_m \cap \partial \Lambda \cap \overline{\Omega}\big) \cup \big(\partial \Omega \cap \partial \Lambda \cap \{\nabla \tilde \Psi_m(z)=z\}\big) \cup (\partial V_m \cap \partial \Omega \cap \overline{\Lambda})\\
&\subset \Bigl( (\partial U_m \cap \partial \Lambda \cap \overline{\Omega}\big) \cup (\partial \Omega \cap \partial \Lambda)  \cup (\partial V_m \cap \partial \Omega \cap \overline{\Lambda}) \Bigr)  \cap \{\nabla \tilde \Psi_m(z)=z\}\\
&\subset S_2 \cup \big(\partial \Omega \cap \partial \Lambda \cap \{\nabla \tilde \Psi_m(z)=z\}\big)\subset S_2.  
\end{align*}

\noindent Next, we show $\overline S_1 \subset S_1 \cup S_2 =X_s$. Indeed, suppose $\{x_n\} \subset S_1$ with $x_n \rightarrow x \in \overline{\Omega \cup \Lambda}$. Then, as $\partial(\Omega \cap U_m)$ is compact, we have that 
\begin{equation} \label{s22}
x \in \partial(\Omega \cap U_m).
\end{equation}
Let $y_n:=\nabla \tilde \Psi_m(x_n) \in \partial_{nt} \Lambda \subset \partial \Lambda \cap \partial V_m$ so that for all $z \in \Lambda$, $$\langle \nabla \tilde \Psi_m^*(y_n)-y_n,z-y_n\rangle \leq 0.$$ By continuity of $\nabla \tilde \Psi_m$ and compactness of $\partial \Lambda \cap \partial V_m$, $y_n \rightarrow \nabla \tilde \Psi_m(x)=:y \in \partial \Lambda \cap \partial V_m$, and by continuity of $\nabla \tilde \Psi_m^*$ and of the inner product, it follows that for all $z \in \Lambda$, $$\langle \nabla \tilde \Psi_m^*(y)-y,z-y\rangle \leq 0.$$ Hence, $y \in \partial_{nt} \Lambda$ and 
\begin{equation} \label{s21}
x=\nabla \tilde \Psi_m^*(y)= (\nabla \tilde \Psi_m)^{-1}(y) \in \nabla \tilde \Psi_m^{-1}(\partial_{nt} \Lambda).
\end{equation} 
Let us first assume $\nabla \tilde \Psi_m(x) \neq x$. In this case, if there exists $t \geq 0$ such that $$\nabla \tilde \Psi_m(x)+ \frac{x-\nabla \tilde \Psi_m(x)}{|x-\nabla \tilde \Psi_m(x)|}t \in \Omega \cap U_m,$$ then since $\Omega \cap U_m$ is open, for $n$ large enough we will also have $$\nabla \tilde \Psi_m(x_n)+ \frac{x_n-\nabla \tilde \Psi_m(x_n)}{|x_n-\nabla \tilde \Psi_m(x_n)|}t \in \Omega \cap U_m,$$ a contradiction to the fact that $x_n \in K$. Therefore, we obtain that for all $t \geq 0$, $$\nabla \tilde \Psi_m(x)+ \frac{x-\nabla \tilde \Psi_m(x)}{|x-\nabla \tilde \Psi_m(x)|}t \not \in \Omega \cap U_m;$$ hence, $x \in K$ and together with (\ref{s21}), we obtain $x \in S_1$. Now it may happen that $\nabla \tilde \Psi_m(x) = x$. In this case, by $(\ref{s21})$, we know $x=\nabla \tilde \Psi_m(x) \in \partial_{nt} \Lambda$, so in particular $x \in \partial \Lambda \cap \partial V_m$. Moreover, by (\ref{s22}), we also have $x \in \partial(\Omega \cap U_m)$. If $x \in \partial U_m \cap \Omega$, then it follows that $x \in \partial V_m \cap \Omega$, a contradiction to the fact that the free boundary does not enter the common region (see Remark \ref{cmr}). Therefore, we must have $x \in \partial V_m \cap \partial \Omega$; hence, Lemma \ref{s01} implies $x \in S_2$ and so $\overline S_1 \subset S_1 \cup S_2$.                      
\end{proof}

\begin{cor} \label{impa}
Let $\tilde\Psi_m$ be the $C^1(\mathbb{R}^n)$ extension of $\Psi_m$ to $\mathbb{R}^n$ given by Theorem \ref{Fii1}. Then for all $z \in \partial \Lambda$ and $R>0$ with $\overline{B_R(z)} \cap \partial \Omega = \emptyset$, we have that $\nabla \tilde \Psi_m(S_1) \cap \overline{B_R(z)}$ is compact. In particular, $\nabla \tilde \Psi_m(S_1) \setminus \partial \Omega$ is relatively closed in $\partial \Lambda \setminus \partial \Omega$.    
\end{cor}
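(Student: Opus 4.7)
The strategy is to exploit the compactness of $X_s = S_1 \cup S_2$ from Lemma~\ref{s2}, together with the continuity of $\nabla\tilde\Psi_m$ on $\mathbb{R}^n$ (Theorem~\ref{Fii1}), to show that any limit point of $\nabla\tilde\Psi_m(S_1)$ inside $\overline{B_R(z)}$ lies again in $\nabla\tilde\Psi_m(S_1)$. Concretely, take a sequence $y_k = \nabla\tilde\Psi_m(x_k)$ with $x_k \in S_1$ and $y_k \in \overline{B_R(z)}$ converging to some $y \in \overline{B_R(z)}$. By Remark~\ref{r0}, each $x_k$ lies in the compact set $\partial\Omega$, and since $X_s$ is also compact, after extracting a subsequence we may assume $x_k \to x \in X_s \cap \partial\Omega$. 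Continuity of $\nabla\tilde\Psi_m$ then gives $y = \nabla\tilde\Psi_m(x)$.

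The main point is to rule out $x \in S_2$. Since $\Omega$ is open, the piece $\partial U_m \cap \partial\Lambda \cap \Omega$ of $S_2$ is disjoint from $\partial\Omega$, so $x$ would have to lie in $\partial\Omega \cap \partial\Lambda \cap \{\nabla\tilde\Psi_m(w)=w\}$ or in $\partial V_m \cap \partial\Omega \cap \Lambda$. The former fixes $x$ by definition, and Lemma~\ref{s01} fixes every point of the latter. Thus in either case $y = \nabla\tilde\Psi_m(x) = x \in \partial\Omega$, contradicting $y \in \overline{B_R(z)}$ together with the hypothesis $\overline{B_R(z)} \cap \partial\Omega = \emptyset$. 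Hence $x \in S_1$, so $y \in \nabla\tilde\Psi_m(S_1)$; this yields closedness of $\nabla\tilde\Psi_m(S_1) \cap \overline{B_R(z)}$, and boundedness is automatic from the ambient ball, proving compactness.

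For the ``in particular'' assertion, suppose $y \in \partial\Lambda \setminus \partial\Omega$ is a relative limit in $\partial\Lambda \setminus \partial\Omega$ of a sequence in $\nabla\tilde\Psi_m(S_1) \setminus \partial\Omega$. Since $\partial\Omega$ is closed and $y \notin \partial\Omega$, I can choose $R > 0$ so small that $\overline{B_R(y)} \cap \partial\Omega = \emptyset$. Applying the first part with $z = y$ shows that $\nabla\tilde\Psi_m(S_1) \cap \overline{B_R(y)}$ is compact and therefore contains $y$, giving $y \in \nabla\tilde\Psi_m(S_1) \setminus \partial\Omega$ as desired.

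The only real subtlety is the middle step: one has to dissect $S_2$ piece by piece and combine the definition of $S_2$ with Lemma~\ref{s01} to see that $\nabla\tilde\Psi_m$ sends $S_2 \cap \partial\Omega$ back into $\partial\Omega$. Once this is in hand, the disjointness hypothesis on the ball forces the limit point to come from $S_1$, which is exactly the content of the corollary.
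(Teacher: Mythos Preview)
Your argument is correct and follows essentially the same approach as the paper: both pass to a subsequential limit $x\in\partial\Omega$ and obtain a contradiction from $\nabla\tilde\Psi_m(x)=x\in\partial\Omega$ when the limit fails to lie in $S_1$. The only cosmetic difference is that you invoke the compactness of $X_s$ from Lemma~\ref{s2} as a black box and then exclude $S_2$ via Remark~\ref{r1}/Lemma~\ref{s01}, whereas the paper re-runs the relevant part of the proof of Lemma~\ref{s2} to verify the $S_1$ conditions directly.
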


\begin{proof}
It suffices to prove that $\nabla \tilde \Psi_m(S_1) \cap \overline{B_R(z)}$ is closed. Let $y_n \in \nabla \tilde \Psi_m(S_1) \cap \overline{B_R(z)}$ and suppose $y_n \rightarrow y \in \partial_{nt} \Lambda \cap \overline{B_R(z)}$. Set $x_n:= \nabla \tilde \Psi_m^*(y_n)$ and $x:=\nabla \tilde \Psi_m^*(y)$. Then by repeating the proof of Lemma \ref{s2}, it follows that $x \in \partial(\Omega \cap U_m)$, $L(x) \cap \overline{\Omega \cap U_m} \subset \partial(\overline{\Omega \cap U_m})$, and $x \in \nabla \tilde \Psi_m^{-1}(\partial_{nt} \Lambda)$. Since $y_n \in \nabla \tilde \Psi_m(S_1)$, it also follows from Remark \ref{r0} that $x_n \in \partial \Omega$; hence, $x\in \partial \Omega$. Now if $y=\nabla \tilde \Psi_m(x)=x$, then $y \in \partial \Omega$. However, $y \in \overline{B_R(z)}$, and by assumption, $\overline{B_R(z)} \cap \partial \Omega = \emptyset$. Thus, $\nabla \tilde \Psi_m(x) \neq x$, and $y \in\nabla \tilde \Psi_m(S_1) \cap \overline{B_R(z)}$. Since $\nabla \tilde \Psi_m(S_1) \cap \overline{B_R(z)}$ is compact, $\nabla \tilde \Psi_m(S_1) \setminus \partial \Omega$ is relatively closed in $\partial \Lambda \setminus \partial \Omega$.

\end{proof}

\begin{rem} \label{bbgun}
By arguing as in the proof of Corollary \ref{impa}, one may similarly deduce that the set $\nabla \tilde \Psi_m(A_1) \setminus \partial \Omega$ is relatively closed in $\partial \Lambda \setminus \partial \Omega$. Moreover, it is not hard to see that $S_1$ and $A_1$ are relatively closed in $\partial \Omega \setminus \partial \Lambda$.  
\end{rem}

\noindent Next, we generalize the uniform localization lemma of Caffarelli and McCann \cite[Lemma 7.11]{CM} to the case in which the supports may have a nontrivial intersection. Our proof is by contradiction and follows the line of reasoning for the disjoint case although a new ingredient is required to get around the lack of a separating hyperplane. Our key observation is that one may fully identify the exposed points of the closed convex set $Z_{min}$ that shows up in the work of Caffarelli and McCann. Indeed in that context, thanks to the fact that $\overline{\Omega}\cap \overline{\Lambda}=\emptyset$, the authors only need that all exposed points lie in the set $\overline{\Lambda}$ to obtain the contradiction; however, this is not enough in the general case. We get around this difficulty by exploiting the fact that $\nabla \tilde \Psi_m(x)=x$ for all $x \in \Lambda \setminus V_m $ (see Theorem \ref{Fii1}). Consequently, a weaker version of the uniform localization lemma for the disjoint case is established; this paves the way for the next section in which we estimate the Hausdorff dimension of the singular set.

\begin{lem} \label{d} (Uniform localization: general case)
Let $\tilde\Psi_m$ be the $C^1(\mathbb{R}^n)$ extension of $\Psi_m$ to $\mathbb{R}^n$ given by Theorem \ref{Fii1} and $X_{s}$ the compact set in Lemma \ref{s2}. Then for $R>0$ there exists $\epsilon_0>0$ such that for all $z \in \overline{\Omega \cap U_m}$ for which $B_R(z) \cap X_{s} = \emptyset$ and for all $0\leq \epsilon \leq \epsilon_0$, we have $$Z_{\epsilon}(z) \subset B_{R}(z).$$
\end{lem}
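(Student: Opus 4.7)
The plan is to argue by contradiction, in the spirit of Caffarelli--McCann \cite[Lemma 7.11]{CM}, but inserting a new analysis of extreme points to handle the lack of a separating hyperplane. Suppose the conclusion fails: there exist $R>0$ and sequences $z_k \in \overline{\Omega \cap U_m}$ with $B_R(z_k) \cap X_s = \emptyset$ and $\epsilon_k \downarrow 0$ such that $Z_{\epsilon_k}(z_k) \not\subset B_R(z_k)$. Passing to a subsequence and using compactness of $\overline{\Omega\cap U_m}$ together with compactness of $X_s$ (Lemma \ref{s2}), I may assume $z_k \to z_\infty \in \overline{\Omega\cap U_m}$ with $B_R(z_\infty) \cap X_s = \emptyset$. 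After translating so that $z_\infty = 0$, I would apply affine maps $T_k$ of unit determinant (John's lemma) that normalize the sections, so $\hat Z_k := T_k(Z_{\epsilon_k}(z_k) - z_k)$ lies between two concentric balls; Blaschke's selection theorem (Theorem \ref{Bl}) then yields a further subsequence with $\hat Z_k \to Z_{min}$ in the Hausdorff topology, where $Z_{min}$ is a convex body containing the origin in its interior.

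Next, I would transfer the convex function $\tilde\Psi_m$ to the normalized picture, producing a rescaled sequence $\hat\Psi_k(\zeta) := \epsilon_k^{-1}\bigl(\tilde\Psi_m(T_k^{-1}\zeta + z_k) - \tilde\Psi_m(z_k) - \langle \nu_{\epsilon_k}, T_k^{-1}\zeta\rangle\bigr)$ whose unit section is exactly $\hat Z_k$. Lemma \ref{a} gives two-sided Lebesgue bounds for $M_{\tilde\Psi_m}$ on $\Omega\cap U_m$ and identifies it with Lebesgue on $\Lambda\setminus V_m$; Lemma \ref{b} provides affine doubling on $X$; combining these with Lemma \ref{c} and Corollary \ref{core1} yields uniform estimates on $\hat\Psi_k$ and on its sections. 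Extracting once more, $\hat\Psi_k \to \Psi_\infty$ locally uniformly to a convex function on $Z_{min}$ whose Monge-Amp\`ere measure inherits, in the rescaled limit sense, the two-density structure of Lemma \ref{a}.

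The main obstacle, and the key new ingredient, is the classification of the extreme points of $Z_{min}$. By Straszewicz (Theorem \ref{Str}) it suffices to analyze exposed points. In the disjoint setting of \cite{CM}, one shows every exposed point of $Z_{min}$ comes from the limit of $\overline\Lambda$, which is disjoint from the limit of $\overline\Omega \ni 0$, and a contradiction follows immediately. Without a separating hyperplane this fails, and the new device is to exploit Theorem \ref{Fii1}: on $\Lambda\setminus\overline{V_m}$ one has $\nabla\tilde\Psi_m(x) = x$, so $\tilde\Psi_m$ is a translated quadratic there. Rescaling this quadratic piece by $T_k$ and $\epsilon_k$ degenerates in the limit, which forces an exposed point $p$ of $Z_{min}$ to come either from $\overline{V_m\cap\Lambda}$, where the transport is genuinely nontrivial, or from $\partial\Omega$ where the gradient agrees with the identity. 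In the first case, combined with Proposition \ref{fbd} and the ray condition encoded in the set $K$ of Definition \ref{d1}, the associated base point on the $\Omega$-side is pinned into $S_1 = A_1 \cup A_2$; in the second case Lemma \ref{s01} and Remark \ref{r1} pin it into $S_2$.

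Finally, I would assemble the pieces: since $Z_{min}$ is a convex body with nonempty interior, Theorem \ref{Str} supplies an exposed point, which by the classification above corresponds, upon pulling back through $T_k^{-1}$, to a point of $X_s = S_1\cup S_2$ lying within $o(1)$ of $z_\infty$. This contradicts $B_R(z_\infty)\cap X_s = \emptyset$ and closes the argument. The delicate step is unambiguously the exposed-point classification; the decomposition $X_s = S_1 \cup S_2$ is engineered precisely so that the two failure modes---nontransverse images of free-boundary rays and identity-gradient points on $\partial\Omega\cap\partial\Lambda$---together account for every way in which an exposed point of $Z_{min}$ can escape the part of $\overline{\Omega\cap U_m}$ where Caffarelli's regularity machinery applies.
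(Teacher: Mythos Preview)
Your contradiction setup and compactness extraction are fine, but the heart of your argument diverges from what is actually needed and, as written, does not close.

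The main gap is your use of John-type affine normalizations $T_k$. The paper (following \cite[Lemma 7.11]{CM}) does \emph{not} normalize the sections. One works directly with the un-rescaled $Z_{\epsilon_k}(z_k)$: since $\nabla\tilde\Psi_m(\mathbb{R}^n)=\overline\Lambda$ is bounded, the defining slopes $\nu_{\epsilon_k}$ stay in $\overline\Lambda$, the sections themselves are bounded, and Blaschke gives $Z_{\epsilon_k}(z_k)\to Z_\infty$ in $\mathbb{R}^n$. After translating so that $\nabla\tilde\Psi_m(z_\infty)=0$ (not $z_\infty=0$), one shows $Z_\infty\subset Z_{min}:=\{\tilde\Psi_m=\tilde\Psi_m(z_\infty)\}$, the genuine flat set of $\tilde\Psi_m$. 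This set is a closed convex set with \emph{empty} interior and is typically unbounded; your description of $Z_{min}$ as ``a convex body containing the origin in its interior'' cannot be right, and with John normalization the limit object is no longer the level set of $\tilde\Psi_m$, so the crucial link to the gradient is lost.

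Your exposed-point classification is also inverted. Exposed points of $Z_{min}$ lie in $\operatorname{spt} M_{\tilde\Psi_m}=\overline{\Omega\cap U_m}\cup\overline{\Lambda\setminus V_m}$ (Lemma \ref{a}); since $Z_{min}\cap\overline{\Omega\cap U_m}=\{z_\infty\}$ and $z_\infty$ is not exposed (a segment through it sits in $Z_\infty$), every exposed---hence every extreme---point lies in $\overline{\Lambda\setminus V_m}$, not in $\overline{V_m\cap\Lambda}$. The decisive new step is then: on $\overline{\Lambda\setminus V_m}$ one has $\nabla\tilde\Psi_m(x)=x$, while on $Z_{min}$ one has $\nabla\tilde\Psi_m=0$; hence the \emph{only} extreme point of $Z_{min}$ is $0$. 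Minkowski's theorem then gives $z_\infty\in rc[Z_{min}]$, so the half-line $\{0+t\,z_\infty/|z_\infty|:t\ge0\}=L(z_\infty)$ lies in $Z_{min}$ and therefore meets $\overline{\Omega\cap U_m}$ only at $z_\infty$. One checks $z_\infty\neq0$ (else $z_\infty\in S_2$), that $0\in\partial V_m\cap\partial\Lambda$ with $z_\infty$ an outward normal, so $0\in\partial_{nt}\Lambda$, and hence $z_\infty\in S_1$. This is the contradiction. Your scheme of ``pulling back through $T_k^{-1}$ to a point of $X_s$ within $o(1)$ of $z_\infty$'' does not produce this ray structure: the normalizing maps may degenerate, and without the identification of $Z_{min}$ as the actual flat set of $\tilde\Psi_m$, there is no mechanism that places $z_\infty$ itself in $S_1\cup S_2$.
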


\begin{proof}
Suppose not. Then there exists $R>0$ such that for all $k\in \mathbb{N}$ there exists $0<\epsilon_k \leq \frac{1}{k}$ and $z_k\in \overline{\Omega \cap U_m}$ satisfying $B_R(z_k)\cap X_s =\emptyset$ and $Z_{\epsilon(k)}(z_k) \not \subset B_R(z_k)$. Since $\overline{\Omega \cap U_m}$ is compact, along a subsequence we have $z_k \rightarrow z_{\infty} \in \overline{\Omega \cap U_m},$ with 
\begin{equation} \label{empt}
B_R(z_{\infty}) \cap X_s =\emptyset.
\end{equation}
By translating all the data we may assume $\nabla \tilde \Psi_m(z_{\infty})=0$. Since $\tilde \Psi_m$ is convex, this implies that $\tilde \Psi_m$ is minimized at $z_{\infty}$. Now by Theorem \ref{Fii1}, $\nabla \tilde \Psi_m(\mathbb{R}^n)=\overline{\Lambda}$ is bounded and each centered affine section is bounded, thus it follows that the slope $\nu_{\epsilon(k)}(z_k)$ of the affine function defining the set $Z_{\epsilon(k)}(z_k)$ is contained in $\Lambda$ (indeed, a translate of the affine function defining the section serves as a supporting hyperplane for $\tilde\Psi_m$). Therefore, along another subsequence $\nu_{\epsilon(k)}(z_k) \rightarrow \nu_{\infty} \in \overline{\Lambda}$ and we can apply Theorem \ref{Bl} (Blaschke selection theorem) to conclude that the sets $Z_{\epsilon(k)}(z_k)$ converge locally in Hausdorff distance to a closed convex set $Z_{\infty}$. Let $Z_{min}:=\{x \in \mathbb{R}^n: \tilde \Psi_m(x)=\tilde \Psi_m(z_{\infty})\}$. By the same exact argument as in \cite[Lemma 7.11 (Claim \#1)]{CM}, one derives $Z_{\infty} \subset Z_{min}$ and that $Z_\infty$ contains a line segment $L$ centered at $z_\infty$ of length $\frac{2R}{\alpha},$ where $\alpha:=n^{\frac{3}{2}}$ is the constant from John's Lemma (the idea is that if strict convexity fails at a point, then there must be a segment on which the function is affine). Now by Theorem \ref{Fii1}, we know $\nabla \tilde \Psi_m : \overline{U_m \cap \Omega} \rightarrow \overline{V_m \cap \Lambda}$ is a homeomorphism; hence, $Z_{min}$ cannot intersect $\overline{U_m \cap \Omega}$ except at the single point $z_{\infty}$, which necessarily, must lie on the boundary. Therefore, the set $Z_{min} \setminus \{z_{\infty}\}$ must lie outside of $\overline{U_m \cap \Omega}$. Next, by the same exact argument as in \cite[Lemma 7.11 (Claim \#2)]{CM} we have that the exposed points of $Z_{min}$ (see Definition \ref{exp}) lie in the support of the Monge-Amp\`{e}re measure of $\tilde \Psi_m$. By Lemma \ref{a}, this implies that the exposed points of $Z_{min}$ lie in $\overline{\Omega \cap U_m}$ or $\overline{\Lambda \setminus V_m}$; since $\{z_{\infty}\}=Z_{min} \cap \overline{\Omega \cap U_m}$ and $z_\infty$ is not an exposed point in $Z_{min}$ (due to the existence of $L$), we have that all exposed points of $Z_{min}$ lie in $\overline{\Lambda \setminus V_m}$. Since every extreme point (see Definition \ref{ext}) is a limit of exposed points (by Theorem \ref{Str}), we have that the extreme points of $Z_{min}$ also lie in $\overline{\Lambda \setminus V_m}$. Next, note that if $Z_{min}$ would contain a whole line, then gradient monotonicity would imply $\nabla \tilde \Psi_m \cdot e_1 = 0$, where $e_1$ is the direction of the line. This however, contradicts $\nabla \tilde \Psi_m(\mathbb{R}^n) = \overline{\Lambda}$. Since the closed, convex set $Z_{min}$ does not contain a line, by Minkowski's theorem \cite[Theorem 18.5]{Ro} we have $Z_{min}=conv[ext[Z_{min}]+rc[Z_{min}]]$. Also, since $z_{\infty} \in Z_{min}$, we have $Z_{min}$ is non-empty, so $ext[Z_{min}]$ is non-empty.  Hence, $z_{\infty} = \sum t_i (x_i+y_i)$, where $\sum t_i = 1$, $x_i \in \overline{\Lambda \setminus V_m}$, and $y_i \in rc[Z_{min}]$. Since the recession cone of a convex set is convex, we have that $y:=\sum t_i y_i \in rc[Z_{min}]$. Moreover, by Theorem \ref{Fii1}, $\nabla \tilde \Psi_m(x)=x$ on $\Lambda \setminus \overline{V_m}$ and by continuity on $\Lambda \setminus V_m$. Combining this fact with the definition of $Z_{min}$ yields $0=\nabla \tilde \Psi_m(x_i)=x_i$. Note that this also shows $0$ to be the only extreme point of $Z_{min}$, which in turn, implies $z_{\infty}=y \in rc[Z_{min}]$. Next, we wish to show 
\begin{equation} \label{z1}
z_{\infty} \neq 0.
\end{equation} 
Assume by contradiction that $z_{\infty}=0$. Recall $\nabla \tilde \Psi_m(z_{\infty})=0$, so in particular $\nabla \tilde \Psi_m(z_{\infty})=z_{\infty}$. However, $z_{\infty} \in \partial(\overline{\Omega \cap U_m})$, and $\nabla \tilde \Psi_m : \overline{U_m\cap \Omega} \rightarrow \overline{V_m\cap \Lambda}$ is a homeomorphism; therefore, $z_{\infty} \in \partial(\overline{\Omega \cap U_m}) \cap \partial (\overline{\Lambda \cap V_m})$. Hence, $$z_{\infty} \in \big((\partial U_m \cap \partial \Lambda \cap \Omega) \cup (\partial \Omega \cap \partial \Lambda) \cup (\partial V_m \cap \partial \Omega \cap \Lambda)\big) \cap \{\nabla \tilde \Psi_m (z)=z\}=S_2,$$ and this contradicts $B_R(z_{\infty})\cap X_s =\emptyset$.

\noindent Thus, since $z_{\infty} \neq 0$ is in $rc[Z_{min}]$, we have that $Z_{min}$ contains a half-line in the direction $z_{\infty}$. Consider a basis for $\mathbb{R}^n$ so that $z_{\infty}$ parallels the negative $x_n$ axis. Let $z \in \mathbb{R}^n$ be arbitrary. Since $\tilde \Psi_m$ is convex, $$\bigg \langle \nabla \tilde \Psi_m(z)- \nabla \tilde \Psi_m(ke_n), \frac{z-ke_n}{|z-ke_n|} \bigg \rangle \geq 0.$$ By taking the limit as $k \rightarrow \infty$, we obtain $\partial_n \tilde \Psi_m(z) \geq 0$. Therefore, it follows that $\nabla \tilde \Psi_m(\mathbb{R}^n) = \overline{\Lambda} \subset \{x: x_n \geq 0\}$. This implies 
\begin{equation} \label{l1}
0 \in \partial \Lambda,
\end{equation}
and that $\{x_n=0\}$ is a supporting hyperplane for $\Lambda$ at $0$. In particular, $z_{\infty}$ is a normal to $\Lambda$ at $0$. Recall that all the extreme points of $Z_{min}$ lie in $\overline{\Lambda \setminus V_m}$ and since $0$ is the only extreme point of $Z_{min}$, 
\begin{equation} \label{l2}
0 \in \overline{\Lambda \setminus V_m}. 
\end{equation}
However, $z_{\infty} \in \partial(\overline{\Omega \cap U_m})$ so $$0=\nabla \tilde \Psi_m(z_{\infty}) \in \partial (\overline{V_m\cap \Lambda}) = (\partial V_m\cap \Lambda) \cup (\partial V_m \cap \partial \Lambda)\cup \Big(\partial \Lambda \setminus \partial \big(\overline{\Lambda \setminus V_m}\big)\Big).$$ Hence, (\ref{l1}) and (\ref{l2}) imply $0 \in \partial V_m \cap \partial \Lambda$; in particular, $0$ is a free boundary point. Since $\nabla \tilde \Psi_m^*(0)=z_{\infty}$ and $z_\infty$ is a normal to $\Lambda$ at $0$, convexity of $\Lambda$ implies $$\langle \nabla \tilde \Psi_m^*(0) - 0, y-0 \rangle \leq 0 \hskip .1in \forall y \in \Lambda;$$ therefore, 
\begin{equation} \label{l3}
z_\infty \in \nabla \tilde \Psi_m^{-1}(\partial_{nt} \Lambda).
\end{equation}
Recall also that $z_\infty \in rc[Z_{min}]$ and $0 \in Z_{min}$ so that in particular, by definition of recession cone, $0+\frac{z_\infty}{|z_\infty|}t \in Z_{min} \hskip .1in \forall t \geq 0,$ which is, of course, equivalent to
\begin{equation} \label{l4} 
\nabla \tilde \Psi_m(z_\infty)+\frac{z_\infty-\nabla \tilde \Psi_m(z_\infty)}{|z_\infty-\nabla \tilde \Psi_m(z_\infty)|}t \in Z_{min} \hskip .1in \forall t \geq 0.
\end{equation}
Since $Z_{min} \cap \overline{\Omega \cap U_m}=\{z_{\infty} \}$, (\ref{z1}), (\ref{l3}), and (\ref{l4}) imply $z_\infty \in S_1$. This contradicts that $z_\infty \notin X_s$ (i.e. (\ref{empt})).  

\end{proof}

\begin{rem} \label{rem111} (Uniform localization: disjoint case)
If $\Omega$ and $\Lambda$ are separated by a hyperplane and $\tilde\Psi_m$ is the $C^1(\mathbb{R}^n)$ extension of $\Psi_m$ to $\mathbb{R}^n$ given by Theorem \ref{Fii1}, then $S_2=\emptyset$ so that $$X_{s}=S_1 \cup S_2 =\nabla \tilde \Psi_m^{-1}(\partial_{nt} \Lambda) \cap K \subset X_{nt}:= \overline{\Omega \cap U_m} \cap \nabla \tilde \Psi_m^{-1}(\partial_{nt} \Lambda).$$ Therefore, we obtain Caffarelli and McCann's uniform localization lemma   
\cite[Lemma 7.11]{CM} under a weaker hypothesis: namely, that of replacing $X_{nt}$ by $X_s$. 
\end{rem}

Equipped with the general uniform localization lemma and the other tools developed so far, we are now in a position to prove that away from a singular set, $\tilde \Psi_m$ will be locally $p$-uniformly convex (recall Definition \ref{puc}); this in turn will readily yield the H\"{o}lder continuity of $\nabla \tilde \Psi_m^*$ (see Remark \ref{hold} and Corollary \ref{cor1}). The proof is a direct adaptation of the corresponding proof for the disjoint case (cf. \cite[Theorem 7.13]{CM}); nevertheless, we have decided to include it in the appendix for the reader's convenience.     

\begin{thm} \label{th1} 
Let $\tilde\Psi_m$ be the $C^1(\mathbb{R}^n)$ extension of $\Psi_m$ to $\mathbb{R}^n$ given by Theorem \ref{Fii1}. Given $x \in \overline{\Omega \cap U_m}$ and  $R>0$ there exists $r=r(R,\epsilon_0)>0$ (where $\epsilon_0$ is from Lemma \ref{d}) such that  $\tilde \Psi_m$ will be p-uniformly convex on $\Omega \cap U_m \cap B_{\frac{r}{2}}(x)$ if $B_{3R}(x)$ is disjoint from the closed set $\overline{\Lambda \setminus V_m}  \cup X_s$ and has convex intersection with $\Omega \cap U_m$.   
\end{thm}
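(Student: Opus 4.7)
The plan is to reduce the statement to the disjoint-case argument of \cite[Theorem 7.13]{CM} by assembling the three ingredients already prepared: the affine doubling of $M_{\tilde\Psi_m}$ (Lemma \ref{b}), the general uniform localization (Lemma \ref{d}), and the geometric section decay (Lemma \ref{c} with Corollary \ref{core1}).

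First I would fix $r \in (0,R)$ small enough that for every $z \in \overline{\Omega \cap U_m} \cap B_{r/2}(x)$ the ball $B_R(z)$ is still contained in $B_{3R}(x)$. The hypothesis on $B_{3R}(x)$ then transfers uniformly to each $B_R(z)$: the intersection $B_R(z) \cap (\Omega \cap U_m)$ remains convex (being a convex subset of the convex set $B_{3R}(x) \cap (\Omega \cap U_m)$), and $B_R(z)$ still avoids $\overline{\Lambda \setminus V_m} \cup X_s$. In particular the convex-intersection assumption rules out $\partial_{nc}U_m$, and the $X_s$-disjointness rules out the remaining singular piece $\overline{\partial V_m \cap \Lambda} \cap \partial(\Omega \cap \Lambda) \subset S_2$, so Lemma \ref{b} gives a doubling constant $C$ that is uniform in $z$. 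Simultaneously, Lemma \ref{d} produces a single $\epsilon_0 > 0$ such that $Z_\epsilon(z) \subset B_R(z)$ for every such $z$ and every $\epsilon \in [0,\epsilon_0]$.

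Second, because the doubling bound is now available uniformly on every centered section contained in $B_R(z)$, I would iterate Lemma \ref{c} via Corollary \ref{core1} to extract constants $s \in (0,1)$ and $\bar t \in (0,1)$ with
$$Z_{s^k \epsilon_0}(z) \subset \bar t^{\,k} \cdot Z_{\epsilon_0}(z)$$
for every integer $k \geq 0$ and every $z \in \overline{\Omega \cap U_m} \cap B_{r/2}(x)$. Combined with John's lemma and the bound $Z_{\epsilon_0}(z)\subset B_R(z)$, this controls the Euclidean diameter of the $k$-th centered section by $C\bar t^{\,k}$, uniformly in $z$.

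Third, the geometric decay is translated into $p$-uniform convexity exactly as in \cite[Proof of Theorem 7.13]{CM}: for a point of differentiability $y \in Z_{s^k\epsilon_0}(z) \setminus Z_{s^{k+1}\epsilon_0}(z)$ one has $|y-z| \leq C \bar t^{\,k}$ together with a matching lower bound of order $s^k\epsilon_0$ on the convex excess $\tilde\Psi_m(y)-\tilde\Psi_m(z)-\langle \nabla \tilde\Psi_m(z),y-z\rangle$; selecting $p \geq 2$ with $\bar t^{\,p}=s$ (possibly enlarging $p$ to enforce $p\geq 2$) produces
$$\langle \nabla\tilde\Psi_m(y)-\nabla\tilde\Psi_m(z),\,y-z\rangle \geq c|y-z|^{p}$$
for every pair of differentiability points in $\Omega \cap U_m \cap B_{r/2}(x)$, which is the desired $p$-uniform convexity.

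The genuinely new feature compared with the disjoint case is Step 1, where one must verify that uniform localization really does propagate across a whole neighborhood of $x$; this is exactly what the enlarged singular set $X_s$ in Lemma \ref{d} is engineered to permit, so no new analytic idea is needed beyond what has already been developed. The main technical work sits in Step 3, but since that is a mechanical conversion of section decay into a gradient modulus identical to the disjoint setting, it is appropriately deferred to the appendix.
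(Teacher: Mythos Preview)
Your plan is correct and tracks the paper's proof essentially step for step: transfer the hypotheses from $B_{3R}(x)$ to $B_R(z)$ for each $z$ near $x$, invoke Lemma~\ref{b} and Lemma~\ref{d} to get uniform doubling and localization, iterate Lemma~\ref{c}/Corollary~\ref{core1} for section decay, and convert this to $p$-uniform convexity with $p=\max\{\log s_0/\log\bar t,\,2\}$ exactly as in \cite[Theorem 7.13]{CM}. The one step you summarize a bit loosely is the ``matching lower bound of order $s^k\epsilon_0$ on the convex excess'': since the centered sections are defined with slope $\nu_\epsilon$ rather than $\nabla\tilde\Psi_m(z)$, membership in $\partial Z_\epsilon(z)$ does not by itself bound the excess relative to the tangent plane, and the paper (like \cite{CM}) closes this via an auxiliary point $z_t=(1-t)z_0+tz_1$ with $t/(1-t)=n^{3/2}$, a second application of Lemma~\ref{c} at $z_t$, and John's lemma to show neither $z_0$ nor $z_1$ lies in $Z_{s_0\epsilon}(z_t)$---but you correctly flag this as the mechanical part inherited verbatim from the disjoint case.
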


\begin{cor} \label{cor1}
Let $\tilde\Psi_m$ be the $C^1(\mathbb{R}^n)$ extension of $\Psi_m$ to $\mathbb{R}^n$ given by Theorem \ref{Fii1}. Consider the closed set $$F:=\nabla \tilde \Psi_m(\partial_{nc}U_m) \cup \nabla \tilde \Psi_m(X_s),$$ with $X_s$ as in Lemma \ref{s2}. Then $\tilde\Psi_m^* \in C_{loc}^{1,\alpha}\bigl(\overline{\Lambda \cap V_m} \setminus F\bigr)$, where $\alpha:=\frac{1}{p-1}$ and $p$ is as in Theorem \ref{th1}.   
\end{cor}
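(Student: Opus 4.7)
The plan is to push the local $p$-uniform convexity of $\tilde\Psi_m$ supplied by Theorem \ref{th1} through the homeomorphism $\nabla\tilde\Psi_m\colon \overline{\Omega\cap U_m}\to\overline{\Lambda\cap V_m}$ of Theorem \ref{Fii1}, converting it into local $C^{1,1/(p-1)}$ regularity of $\tilde\Psi_m^*$ by means of Remark \ref{hold}.

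Fix $y_0\in\overline{\Lambda\cap V_m}\setminus F$ and set $x_0:=\nabla\tilde\Psi_m^*(y_0)\in\overline{\Omega\cap U_m}$. Because $\nabla\tilde\Psi_m$ and $\nabla\tilde\Psi_m^*$ are mutual inverses between the two closures, the hypothesis $y_0\notin\nabla\tilde\Psi_m(\partial_{nc}U_m)\cup\nabla\tilde\Psi_m(X_s)$ translates directly into $x_0\notin\partial_{nc}U_m\cup X_s$. The set $\partial_{nc}U_m$ is closed by its defining property (\ref{nc}) and $X_s$ is closed by Lemma \ref{s2}. The additional key input, which I verify in the next paragraph, is the inclusion
\[
\overline{\Omega\cap U_m}\cap\overline{\Lambda\setminus V_m}\subset X_s;
\]
together with $x_0\notin X_s$ it forces $x_0\notin\overline{\Lambda\setminus V_m}$, so that one may choose $R>0$ with $B_{3R}(x_0)$ disjoint from the closed set $\overline{\Lambda\setminus V_m}\cup X_s$ and having convex intersection with $\Omega\cap U_m$. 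Theorem \ref{th1} then produces $r=r(R,\epsilon_0)>0$ on which $\tilde\Psi_m$ is $p$-uniformly convex on $\Omega\cap U_m\cap B_{r/2}(x_0)$, and Remark \ref{hold} converts this into $\tilde\Psi_m^*\in C^{1,1/(p-1)}\bigl(\nabla\tilde\Psi_m(\Omega\cap U_m\cap B_{r/2}(x_0))\bigr)$. The image is a relative neighborhood of $y_0$ in $\overline{\Lambda\cap V_m}$ by the homeomorphism property, and since $y_0\in\overline{\Lambda\cap V_m}\setminus F$ was arbitrary, a standard covering argument (using that $F$ is closed, being the image of closed sets in a compact space under a continuous map) delivers $\tilde\Psi_m^*\in C_{loc}^{1,\alpha}(\overline{\Lambda\cap V_m}\setminus F)$ with $\alpha:=1/(p-1)$.

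The main obstacle, and essentially the only non-routine step, is the inclusion $\overline{\Omega\cap U_m}\cap\overline{\Lambda\setminus V_m}\subset X_s$. As noted in the proof of Lemma \ref{b}, this intersection sits inside $\overline{\partial V_m\cap\Lambda}\cap\partial(\Omega\cap\Lambda)$, and a short case analysis based on Lemma \ref{s01}, the openness of $V_m$, and Remark \ref{cmr} finishes the job. Points in $\partial V_m\cap\partial\Omega\cap\Lambda$ lie in $S_2$ directly; points in $\partial V_m\cap\partial\Omega\cap\partial\Lambda$ satisfy $\nabla\tilde\Psi_m(z)=z$ by Lemma \ref{s01} and hence belong to the middle component of $S_2$; candidate points in $\partial V_m\cap\partial\Lambda\cap\Omega$ are excluded altogether, because $x\in\Omega$ together with the strict convexity of $\Lambda$ forces every nearby point of $\Lambda$ to lie in $\Omega\cap\Lambda\subset V_m$ (Remark \ref{cmr}), so such an $x$ cannot be accumulated by $\Lambda\setminus V_m$. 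Once this classification is in place, the transfer of regularity through the homeomorphism sketched above is immediate.
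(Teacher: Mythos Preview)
Your argument is correct and follows essentially the same route as the paper's proof: pull $y_0$ back to $x_0$ via $\nabla\tilde\Psi_m^*$, use $x_0\notin\partial_{nc}U_m\cup X_s$ to secure the hypotheses of Theorem~\ref{th1}, and then push the resulting $p$-uniform convexity forward through Remark~\ref{hold} and the homeomorphism of Theorem~\ref{Fii1}. Your verification of the inclusion $\overline{\Omega\cap U_m}\cap\overline{\Lambda\setminus V_m}\subset X_s$ is in fact slightly more explicit than what appears in the paper's proof of the corollary itself; the paper establishes the same inclusion (phrased as $\overline{\partial V_m\cap\Lambda}\cap\partial(\Omega\cap\Lambda)\subset X_s$) in the Appendix proof of Theorem~\ref{th1} and then invokes it tersely in the corollary by writing ``$x\notin\partial(\Omega\cap\Lambda)\cap\partial V_m$''.
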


\begin{proof}
Let $y \in \overline{\Lambda \cap V_m} \setminus F$ and set $x:=\nabla \tilde \Psi_m^*(y) \in \overline{\Omega \cap U_m}$. Note that $x \notin \partial_{nc}U_m$, so there exists $\delta_1=\delta(x)>0$ such that $B_{\delta_1}(x) \cap (\Omega \cap U_m)$ is convex. Moreover, by Lemma \ref{s2}, $X_s:=S_1 \cup S_2$ is compact, and since $x \notin X_s$, there exists $\delta_2>0$ such that $B_{\delta_2}(x) \cap X_s = \emptyset$. Let $\delta:= \min\{\delta_1, \delta_2\}$. Note that since $B_{\delta_2}(x) \cap X_s = \emptyset$ we have $x \notin \partial (\Omega \cap \Lambda) \cap \partial V_m$; thus, by possibly taking $\delta$ smaller we may assume without loss of generality that $B_{\delta}(x) \cap \overline{\Lambda \setminus V_m} = \emptyset$. Then set $R:=\frac{\delta}{3}$ so that by Theorem \ref{th1}, there exists $r=r(R,\epsilon_0)$ (where $\epsilon_0$ is from Lemma \ref{d}) such that  $\tilde \Psi_m$ will be p-uniformly convex on $\Omega \cap U_m \cap B_{\frac{r}{2}}(x)$. Since the convexity exponent and constant are universal, it follows that $\tilde \Psi_m^* \in C^{1,\frac{1}{p-1}}\Bigl(\overline{\nabla \tilde \Psi_m(\Omega \cap U_m \cap B_{\frac{r}{2}}(x))}\Bigr)$. Now $\nabla \tilde \Psi_m(\Omega \cap U_m \cap B_{\frac{r}{2}}(x))$ is relatively open in $\Lambda \cap V_m$ (since $\nabla \tilde \Psi_m$ is a homeomorphism), so there exists $s>0$ such that $\tilde \Psi_m^* \in C^{1,\frac{1}{p-1}}\big(\overline{B_s(y) \cap (\Lambda \cap V_m)}\big)$.             
\end{proof}

\begin{rem} \label{frem}
If $\overline{\Omega} \cap \overline{\Lambda} = \emptyset$, then $\nabla \tilde \Psi_m(S_2)=\emptyset$ and so $F \subset \nabla \tilde \Psi_m(\partial_{nc}U_m) \cup \partial_{nt} \Lambda;$ in particular, we obtain \cite[Corollary 7.14]{CM}.    
\end{rem}

\noindent For $x \in \partial U_m \cap \Omega$, we know that $\nabla \tilde \Psi_m(x)-x$ is parallel to the normal of the free boundary by Theorem \ref{Fii2}. Combining this fact with Corollary \ref{cor1} enables us to derive $C_{loc}^{1,\alpha}$ regularity of the free boundaries inside the domains.    

\begin{cor} \label{cor2} (Free boundary regularity inside the domains)
The free boundaries $\partial V_m \cap \Lambda$ and $\partial U_m \cap \Omega$ are $C_{loc}^{1,\alpha}$ hypersurfaces away from $\partial{(\Omega \cap \Lambda)}$ with $\alpha:= \frac{1}{p-1}$ and $p$ as in Theorem \ref{th1}.   
\end{cor}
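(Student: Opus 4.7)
The plan is to combine Theorem \ref{Fii2} with a dualized form of Corollary \ref{cor1}. By symmetry it suffices to treat $\partial U_m \cap \Omega$; fix $x_0 \in \partial U_m \cap \Omega$ with $x_0 \notin \partial(\Omega \cap \Lambda)$. Theorem \ref{Fii2} already displays the free boundary near $x_0$ as a $C^1$ graph whose inward normal is parallel to $N(x) = \nabla \tilde \Psi_m(x) - x \neq 0$. Hence the corollary reduces to proving that $\nabla \tilde \Psi_m$ is locally $C^{0,\alpha}$ at $x_0$: once $N$ is H\"older along $\partial U_m \cap \Omega$ near $x_0$, the standard graph parametrization promotes the $C^1$ surface to $C^{1,\alpha}$.

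The entire chain Lemma \ref{a} $\to$ Theorem \ref{th1} $\to$ Corollary \ref{cor1} is invariant under the exchange $(f,\Omega,U_m,\tilde \Psi_m) \leftrightarrow (g,\Lambda,V_m,\tilde \Psi_m^*)$, so applying the dualized statement yields
\[
\tilde \Psi_m \in C^{1,\alpha}_{loc}\bigl(\overline{\Omega \cap U_m} \setminus F^*\bigr), \qquad F^* := \nabla \tilde \Psi_m^*(\partial_{nc} V_m) \cup \nabla \tilde \Psi_m^*(X_s^*),
\]
where $X_s^*$ is the analogue of $X_s$ with the two sides interchanged. It therefore suffices to show $x_0 \notin F^*$.

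The key structural input is $x_0 \notin \overline{\Lambda}$. Indeed, by Remark \ref{cmr}, $\Omega \cap \Lambda \subset U_m$, so $x_0 \in \partial U_m$ forces $x_0 \notin \Omega \cap \Lambda$; combined with $x_0 \in \Omega$ and $x_0 \notin \partial(\Omega \cap \Lambda)$, this gives $x_0 \in \Omega \setminus \overline{\Omega \cap \Lambda}$, whence $x_0 \notin \overline{\Lambda}$. Setting $y_0 := \nabla \tilde \Psi_m(x_0) \in \overline{V_m \cap \Lambda}$, Proposition \ref{fbd}(a) removes $y_0$ from $\overline{\partial V_m \cap \Lambda}$, while the homeomorphism of Theorem \ref{Fii1} places $y_0$ on $\partial(V_m \cap \Lambda)$; decomposing the latter as $(\partial V_m \cap \Lambda) \cup (\partial \Lambda \cap \overline{V_m})$ and deleting the excluded closure pins $y_0$ down to $V_m \cap \partial \Lambda$. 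Openness of $V_m$ at $y_0$ and convexity of $\Lambda$ then render $V_m \cap \Lambda$ locally a convex intersection of a ball with $\Lambda$, so $y_0 \notin \partial_{nc} V_m$. For $X_s^* = S_1^* \cup S_2^*$: the dual $S_1^*$ lies in $\partial \Lambda$ and is swept by $\nabla \tilde \Psi_m^*$ into $\partial \Omega$, contradicting $\nabla \tilde \Psi_m^*(y_0) = x_0 \in \Omega$; and $S_2$ is symmetric under the duality so $S_2^* = S_2$, while Lemma \ref{s01} forces every point of each component of $S_2$ to be fixed by $\nabla \tilde \Psi_m^*$, yet $x_0 = \nabla \tilde \Psi_m^*(y_0) \neq y_0$ since $x_0 \notin \overline{\Lambda}$. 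Hence $x_0 \notin F^*$.

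I expect the main obstacle to be the membership $y_0 \in V_m \cap \partial \Lambda$: it hinges on the mild geometric fact that $\overline{\partial V_m \cap \Lambda}$ absorbs $\partial V_m \cap \partial \Lambda$, which must be secured by unpacking Figalli's construction of $V_m$ as a union of open balls and combining it with the $C^1$ regularity of $\tilde \Psi_m$ from Theorem \ref{Fii1}. Once this is in hand, the dualized Corollary \ref{cor1} supplies the $C^{0,\alpha}$ regularity of $\nabla \tilde \Psi_m$ in a neighborhood of $x_0$ in $\overline{\Omega \cap U_m}$, and the graph upgrade completes the proof that $\partial U_m \cap \Omega$ is $C^{1,\alpha}_{loc}$ near $x_0$; the case of $\partial V_m \cap \Lambda$ is identical with the dual roles swapped.
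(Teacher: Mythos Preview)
Your approach is essentially the paper's own, just mirrored: the paper treats $\partial V_m \cap \Lambda$ by applying Corollary~\ref{cor1} directly and showing $y \notin F$, then invokes symmetry for $\partial U_m \cap \Omega$; you treat $\partial U_m \cap \Omega$ via the dualized Corollary~\ref{cor1} and show $x_0 \notin F^*$. The three exclusions you check ($\partial_{nc}V_m$, $S_1^*$, $S_2^*$) correspond exactly to the paper's three exclusions ($\partial_{nc}U_m$, $S_1$, $S_2$).

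The one place you make life harder than necessary is your flagged ``main obstacle.'' You do not need $y_0 \in V_m$; you only need $y_0 \notin \partial_{nc} V_m$. Once Proposition~\ref{fbd}(a) gives $y_0 \notin \overline{\partial V_m \cap \Lambda}$, choose $r>0$ with $B_r(y_0)$ disjoint from that closure. Then $B_r(y_0)\cap \Lambda$ is a connected open set that does not meet $\partial V_m$; since $y_0 \in \overline{V_m \cap \Lambda}$ it must meet $V_m$, hence lies entirely in $V_m$. Thus $B_r(y_0)\cap (V_m\cap \Lambda) = B_r(y_0)\cap \Lambda$ is convex, so $y_0 \notin \partial_{nc}V_m$. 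This is exactly (the dual of) what the paper does, and it bypasses your claimed inclusion $\partial V_m \cap \partial \Lambda \subset \overline{\partial V_m \cap \Lambda}$, which is neither obvious nor required.
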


\begin{proof}
We prove the result only for $\partial V_m \cap \Lambda$ since the argument for $\partial U_m \cap \Omega$ is entirely symmetric. Let $y \in (\partial V_m \cap \Lambda) \setminus \partial (\Omega \cap \Lambda)$; in particular, $y \notin S_2 = \nabla \tilde \Psi_m(S_2)$ (see Remark \ref{r1}). Moreover, since $\nabla \tilde \Psi_m(S_1) \subset \partial V_m \cap \partial \Lambda$, we also have that $y \notin \nabla \tilde \Psi_m(S_1)$. Next, as $y\in \partial V_m \cap \Lambda$, we may apply Proposition \ref{fbd} (free boundary never maps to free boundary) to deduce $x:=\nabla \tilde \Psi_m^{-1}(y)=\nabla \tilde \Psi_m^{*}(y) \notin \overline{\partial U_m \cap \Omega}$. Therefore, $x \in \partial \Omega \setminus \partial V_m$ and so $y \notin \nabla \tilde \Psi_m(\partial_{nc}U_m)$. Hence, $y \notin F$ and Corollary \ref{cor1} implies that $\nabla \tilde \Psi_m^*$ is locally $C^{1,\alpha}$ at $y$. Now thanks to Theorem \ref{Fii2}, $\nabla \tilde \Psi_m^*(y)-y$ is different from $0$ and gives the direction of the inward normal to $V_m$; hence, this normal is locally H\"{o}lder continuous with universal exponent $\alpha>0$.        
\end{proof}

Corollary \ref{cor2} confirms Figalli's prediction on the regularity of the free boundaries \cite[Remark 4.15]{Fi}. Next, we would like to understand the set $F$ that shows up in Corollary \ref{cor1}. Our aim in the next section is to prove that under suitable conditions on the domains $\Omega$ and $\Lambda$, the free boundaries $\overline{\partial U_m \cap \Omega}$ and $\overline{\partial V_m \cap \Lambda}$ are $C_{loc}^{1,\alpha}$ hypersurfaces away from the common region $\Omega \cap \Lambda$ and up to a ``small" singular set contained at the intersection of fixed with free boundary (inside the domains, the result follows from Corollary \ref{cor2}).

\section{Analysis of the singular set}
\label{Section4}

The goal of this section is to prove that away from the common region, the free boundaries are locally $C^{1,\alpha}$ outside of an $\mathcal{H}^{n-2}$ $\sigma$-finite set. To achieve this task, we need some regularity assumptions on the domains and initiate the analysis by developing a method which combines geometric measure theory and convex analysis. The following result is a general statement about projections of convex sets onto other convex sets and is a crucial tool in our study of the singular set.

\begin{prop}  \label{geom2b} Assume $\Omega \subset \mathbb{R}^n$ is a convex, bounded domain and $\Lambda \subset \mathbb{R}^n$ is a uniformly convex, bounded domain with $C^{1,1}$ boundary. Then $$\partial (P_\Lambda(\Omega)\cap \partial \Lambda) \setminus \partial (\partial(\Omega \cap \Lambda)\cap \partial \Lambda)$$ is locally $\mathcal{H}^{n-2}$ finite ($\partial P_\Lambda(\Omega)$ is discussed under Definition \ref{projj}).     
\end{prop}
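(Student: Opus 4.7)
The plan is to apply Corollary \ref{corgeom} to
$$M := \partial(P_\Lambda(\Omega)\cap\partial\Lambda) \setminus \partial(\partial(\Omega\cap\Lambda)\cap\partial\Lambda)$$
inside a local $C^{1,1}$ chart of $\partial\Lambda$; such charts provide a bi-Lipschitz parametrization by an open subset of $\mathbb{R}^{n-1}$, courtesy of the $C^{1,1}$ regularity, with uniform convexity of $\Lambda$ moreover yielding a bi-Lipschitz Gauss map. The goal reduces to producing, at each $y_0 \in M$, an $(n-2)$-plane $\pi(y_0) \subset \mathbb{T}_{y_0}\partial\Lambda$ and uniform constants $\delta, \alpha > 0$ so that $y_0 + C_\alpha(\pi(y_0))$ is disjoint from $M$ inside $B_\delta(y_0)$.

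First I would characterize points of $M$. For $y_0 \in M$, compactness gives $x_0 \in \overline\Omega$ with $x_0 = y_0 + t_0 n(y_0)$ and $t_0 \geq 0$. The case $t_0 = 0$ would place $y_0 \in \overline\Omega \cap \partial\Lambda$; since $y_0$ is a boundary point of the projection, every neighborhood of $y_0$ on $\partial\Lambda$ must contain points outside $P_\Lambda(\Omega)$, hence outside $\overline\Omega$, forcing $y_0 \in \partial(\partial(\Omega\cap\Lambda)\cap\partial\Lambda)$ and contradicting $y_0 \in M$. Thus $t_0 > 0$ and, taking the first entry, $x_0 \in \partial\Omega$. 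Moreover the ray at $y_0$ must be tangent to $\overline\Omega$ at $x_0$: were it transversal, continuity of the Gauss map would force nearby normal rays to also enter $\overline\Omega$, making $y_0$ interior to the projection. Convexity of $\overline\Omega$ then supplies a supporting hyperplane $H$ at $x_0$ with outward unit normal $\nu_0$ satisfying $\nu_0 \cdot n(y_0) = 0$, so $\nu_0 \in \mathbb{T}_{y_0}\partial\Lambda$. I would take $\pi(y_0)$ to be the orthogonal complement of $\nu_0$ inside $\mathbb{T}_{y_0}\partial\Lambda$, with $\pi(y_0)^\perp$ oriented along $+\nu_0$.

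The cone condition then follows from a supporting-hyperplane computation. For $y \in \partial\Lambda$ inside $y_0 + C_\alpha(\pi(y_0))$ at distance $s$ from $y_0$, choosing $\alpha$ large makes the cone narrow around $\nu_0$, so $\nu_0 \cdot (y - y_0) \geq s/2$. The $C^{1,1}$ regularity of $\partial\Lambda$ gives $n(y) = n(y_0) + O(s)$, whence the displaced ray point $y + tn(y) - x_0$ has $\nu_0$-component at least $s/2 + tO(s) + O(s^2)$. Because $\overline\Omega$ is bounded, only $t$ in a bounded interval is relevant, so for $s$ small enough (depending only on $\diam\Omega$ and the Gauss-map Lipschitz constant) this quantity is strictly positive uniformly in $t$. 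Consequently the displaced ray lies in the open halfspace $\{\nu_0 \cdot (\cdot - x_0) > 0\}$, and since $\overline\Omega \subset H^-$ it misses $\overline\Omega$ entirely. Hence $y \notin P_\Lambda(\Omega)\cap\partial\Lambda$, and in particular $y \notin M$.

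The hard part will be to make $\delta$ and $\alpha$ uniform on a compact subset $K \subset M$. Two issues are (i) non-uniqueness of $\nu_0$ when $\partial\Omega$ has a flat face through $x_0$ -- any admissible $\nu_0$ works, but the choice must depend measurably on $y_0$ -- and (ii) ruling out that a displaced ray, while avoiding $\overline\Omega$ near $x_0$, could hit $\overline\Omega$ at a distant point. Both are handled by an upper-semicontinuity / compactness argument on the tangent pair $(x_0, \nu_0)$ over $K$, together with the bi-Lipschitz Gauss map from uniform convexity of $\Lambda$, which converts quantitative linear separation from $H$ into uniform angular separation on $\partial\Lambda$. With uniform $\delta, \alpha$ in hand, Corollary \ref{corgeom} delivers $\mathcal{H}^{n-2}(K) < \infty$, producing the asserted local $\mathcal{H}^{n-2}$-finiteness.
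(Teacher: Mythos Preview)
Your overall architecture is the same as the paper's: locate the first-entry point $x_0\in\partial\Omega$ along the normal ray from $y_0$, take the supporting normal $\nu_0\perp n(y_0)$ as the cone axis, show that for $y$ in a narrow cone around $\nu_0$ the displaced ray $\{y+tn(y)\}_{t\ge0}$ stays in the open half-space $\{\nu_0\cdot(\,\cdot\,-x_0)>0\}$ and hence misses $\overline\Omega$, and then invoke Corollary~\ref{corgeom} in a chart. The identification of $x_0$, the tangency argument, and the reduction to Corollary~\ref{corgeom} are all fine.

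The gap is in your displacement estimate. You write $n(y)=n(y_0)+O(s)$ and conclude that $\nu_0\cdot\big(y+tn(y)-x_0\big)\ge s/2+tO(s)+O(s^2)$ is positive ``for $s$ small enough.'' But the dangerous term $t\,\nu_0\cdot n(y)$ is of order $ts$ with an \emph{a priori unknown sign}, while the favorable term is of order $s$; their ratio is $t$, which ranges over a \emph{fixed} bounded interval (roughly $[0,\diam(\Omega\cup\Lambda)]$) and does not shrink with $s$. If the Gauss-map Lipschitz constant times this diameter exceeds your cone constant, the estimate fails no matter how small $s$ is. A mere $C^{1,1}$ bound on $n(\cdot)$, or even the bi-Lipschitz Gauss map you cite, controls only the \emph{magnitude} $|n(y)-n(y_0)|$, not the sign of $\nu_0\cdot n(y)$.

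The paper closes exactly this gap by using uniform convexity of $\Lambda$ quantitatively: in a graph parametrization $\phi$ with $\nabla\phi(\tilde w)=0$, it shows that for $\tilde z$ in the cone $\{\alpha|b_{n-1}^\perp|<b_{n-1}\}$ with $\alpha=C_2/C_1$ (Lipschitz constant over uniform-convexity constant), one has $\partial_{n-1}\phi(\tilde z)\le 0$, i.e.\ $\nu_0\cdot n(y)\ge 0$. With that sign in hand the $t$-term is nonnegative for \emph{all} $t\ge0$, and the estimate $\nu_0\cdot\big(y+tn(y)-x_0\big)\ge b_{n-1}>0$ follows for every $t$, not just $t$ in a small window. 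Your argument becomes correct once you replace the unsigned bound $n(y)=n(y_0)+O(s)$ by this signed monotonicity, and choose $\alpha$ accordingly; note that the resulting $\alpha$ depends on both $C^{1,1}$ data and the uniform-convexity constant, as it must.
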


\begin{proof}
If $\partial (P_\Lambda(\Omega)\cap \partial \Lambda)\setminus \partial (\partial(\Omega \cap \Lambda)\cap \partial \Lambda)=\emptyset,$ then there is nothing to prove (this is the case if $\Omega \subset \Lambda$). Let $y \in \partial (P_\Lambda(\Omega) \cap \partial \Lambda)\setminus \partial (\partial(\Omega \cap \Lambda)\cap \partial \Lambda)$. We may pick $\rho_y>0$ sufficiently small so that 
\begin{equation} \label{hear}
\overline{B_{\rho_y}(y)} \cap \big(\partial (P_\Lambda(\Omega) \cap \partial \Lambda)\setminus\partial (\partial(\Omega \cap \Lambda)\cap \partial \Lambda)\big) = \overline{B_{\rho_y}(y)} \cap \partial (P_\Lambda(\Omega) \cap \partial \Lambda)=:E.
\end{equation}
Our aim is to prove the existence of $\epsilon_y>0$ so that  
\begin{equation} \label{goal}
\mathcal{H}^{n-2}(\overline{B_{\epsilon_y}(y)}\cap E)<\infty.
\end{equation}
Since $y \in \partial \Lambda$, convexity of $\Lambda$ implies the existence of $r_y>0$ so that $B_{r_y}(y) \cap \partial \Lambda$ may be represented by the graph of a concave function $\phi_y$: $$\Lambda \cap B_{r_y} = \{z \in B_{r_y}(y): z_n < \phi_y(z_1, \ldots, z_{n-1})\}.$$ Without loss of generality, we may assume $\phi_y: B_{r_y}^{n-1}(\tilde y) \rightarrow \mathbb{R}$ with $y=(\tilde y, \phi(\tilde y)),$ and $N_\Lambda(y)=(0,1)$ so that $B_{r_y}^{n-1}(\tilde y)\subset \mathbb{T}_y \Lambda-(\tilde y, \phi_y(\tilde y)) \subset \mathbb{R}^{n-1}$ (recall Definition \ref{tang}). Now pick $\delta_y>0$ with $\delta_y\leq\frac{r_y}{2}$. Let $s_y:=\frac{\delta_y}{4}$ so that for all $\tilde z \in B_{\frac{\delta_y}{2}}^{n-1}(\tilde y)$ we have 
\begin{equation} \label{ano}
B_{s_y}^{n-1}(\tilde z) \subset B_{\frac{3\delta_y}{4}}^{n-1}(\tilde y).
\end{equation}
Fix $\tilde z\in B_{\frac{\delta_y}{2}}^{n-1}(\tilde y)$ and set $z:=(\tilde z, \phi_y(\tilde z)) \in \partial \Lambda$; there exists $r_z>0$ such that $\phi_z : B_{r_z}^{n-1}(\bar z) \rightarrow \mathbb{R}$ is a local parametrization of $\partial \Lambda$ at $z$ where $B_{r_z}^{n-1}(\bar z) \subset \mathbb{T}_z \Lambda-(\bar z, \phi_z(\bar z))$ (in this parametrization, $z=(\bar z, \phi_z(\bar z))$). Let $\Phi_y: B_{r_y}^{n-1}(\tilde y) \rightarrow \partial \Lambda$ be the map $\Phi_y(\tilde z)= (\tilde z, \phi(\tilde z))$ ($\Phi_z$ is similarly defined). Since $(\Phi_y^{-1} \circ \Phi_z)(\bar z)=\tilde z$, by continuity of $\Phi_y^{-1} \circ \Phi_z$, we may first pick $\eta=\eta(s_y)>0$ small enough so that $$\Phi_y^{-1}(\Phi_z(B_{\eta}^{n-1}(\bar z))) \subset B_{s_y}^{n-1}(\tilde z);$$ then by continuity of $\Phi_z^{-1} \circ \Phi_y$, there exists $\mu=\mu(\eta)>0$ so that $$B_{\mu}^{n-1}(\tilde z) \subset \Phi_y^{-1}(\Phi_z(B_{\eta}^{n-1}(\bar z))).$$ Thus, by (\ref{ano}) we obtain 
\begin{equation} \label{e3}
B_{\mu}^{n-1}(\tilde z) \subset \Phi_y^{-1}(\Phi_z(B_{\eta}^{n-1}(\bar z)))\subset B_{\frac{3\delta_y}{4}}^{n-1}(\tilde y).
\end{equation}

\vskip .1in 
\textit{Claim:} Let $w \in E$ (see \ref{hear}) and $\phi: B_s^{n-1}(\tilde w) \rightarrow \mathbb{R}$ be any concave parametrization of $\partial \Lambda$ at $w=(\tilde w, \phi(\tilde w))$ such that $N_\Lambda(w)=(0,1)$, and graph$(\phi)$ $\cap \partial (\partial(\Omega \cap \Lambda)\cap \partial \Lambda)=\emptyset$. Then, there exists an $(n-2)$-dimensional hyperplane $\pi(\tilde w)$ and a cone $C_\alpha(\pi(\tilde w)) \subset \mathbb{R}^{n-1}$ (see Definition \ref{cone}) with $\alpha=\alpha(\Lambda)$ so that $$\Phi^{-1}\big(\overline{P_\Lambda(\Omega)\cap \partial \Lambda} \big) \cap \big(\tilde w + C_\alpha(\pi(\tilde w))\big) = \emptyset,$$ where $\Phi:B_s^{n-1}(\tilde w) \rightarrow \partial \Lambda $ is the map $\Phi(\tilde x):=(\tilde x, \phi(\tilde x))$.  
\vskip .1in

\textit{Proof of Claim}: First, since $\partial \Lambda$ is uniformly convex, there is a constant $C_1>0$ such that for all $\tilde x, \tilde y$ 
\begin{equation} \label{co}
\langle \nabla \phi(\tilde y)-\nabla \phi(\tilde x), \tilde x- \tilde y \rangle \geq C_1|\tilde x- \tilde y|^2.
\end{equation} 
Moreover, let $$x:=w+t^*(w)N_\Lambda(w) \in \partial \Omega,$$ where $t^*(w):=\inf \{t\geq0: w+tN_\Lambda(w) \in \overline{\Omega} \}$. Note that since $$w \in E=\overline{B_{\rho_y}(y)} \cap \big(\partial (P_\Lambda(\Omega) \cap \partial \Lambda)\setminus \partial (\partial(\Omega \cap \Lambda) \cap \partial \Lambda)\big),$$ $t^*(w)>0$; hence, the half-line $\{L_t:=w+tN_\Lambda(w)\}_{t>0}$ touches $\Omega$ on the boundary at $x$ and lies on a tangent space of $\Omega$ at $x$ with normal $N_\Omega(x)$. This implies $\langle N_\Omega(x), N_\Lambda(w) \rangle =0$ and since $N_\Lambda(w)=(0,1)$ we have that $e_{n-1}:= N_\Omega(x) \in \mathbb{R}^{n-1}$ (since its $n$-th component is $0$). Next, let $\{e_1,\ldots, e_{n-1}\}$ be an orthonormal basis for $\mathbb{R}^{n-1}$ and fix $\tilde z \in B_s^{n-1}(\tilde w)$; thus, $\tilde z = \displaystyle \sum_{i=1}^{n-1} b_i e_i+\tilde w$ with $\bigg|\displaystyle \sum_{i=1}^{n-1} b_i e_i\bigg|\leq s$. Let $C_2>0$ be the uniform Lipschitz constant of $\partial \Lambda$ and define $\alpha:=\frac{C_2}{C_1}>0$. Set $\pi(\tilde w):=e_1^\perp=\mathbb{R}^{n-2}$, and define $$C_\alpha(\pi(\tilde w)):=\bigl\{(b_1,\ldots, b_{n-1})=(b_{n-1}^\perp, b_{n-1}) \in \mathbb{R}^{n-1}: \alpha |b_{n-1}^\perp| < b_{n-1} \bigr\}.$$ We will now show that $C_\alpha(\pi(\tilde w))$ is the desired cone. It suffices to show that if $\tilde z \in \tilde w+C_\alpha(\pi(w))$, then $\langle N_\Lambda((\tilde z, \phi(\tilde z)), e_{n-1}\rangle \geq 0$ since if this is true, then for $t\geq 0$,    
\begin{align*}
\langle z+tN_\Lambda(z)-x, e_{n-1}\rangle &= \langle z+tN_\Lambda(z)-(w+t^*(w) N_\Lambda(w)), N_\Omega(x) \rangle\\
&= \langle \tilde z- \tilde w, e_{n-1}\rangle+t\langle N_\Lambda(z), e_{n-1}\rangle- t^*(w) \langle N_\Lambda(w), N_\Omega(x)\rangle\\
&\geq b_{n-1} > \alpha |b_{n-1}^\perp|\geq 0,
\end{align*}            
so by convexity of $\Omega$, $z+tN_\Lambda(w) \notin \overline \Omega$, and this implies $\tilde z \notin \Phi^{-1}\bigl(\overline{P_\Lambda(\Omega)\cap \partial \Lambda}\bigr)$. Therefore, we will prove that if $\tilde z \in \tilde w+C_\alpha(\pi(\tilde w))$, then $\langle N_\Lambda((\tilde z, \phi(\tilde z)), e_{n-1}\rangle \geq 0$: since $N_\Lambda(z)=\frac{(-\nabla \phi(\tilde z), 1)}{\sqrt{1+|\nabla \phi(\tilde z)|^2}}$, it suffices to prove $\langle -\nabla \phi(\tilde z), e_{n-1}\rangle \geq 0$. Write $b_{n-1}^{\perp}:=\displaystyle \sum_{i=1}^{n-2} b_{i}e_i$ where $$\tilde z = b_{n-1}^{\perp}+ b_{n-1}e_{n-1}+ \tilde w.$$ Since $\nabla \phi(\tilde w)=0$, we may use (\ref{co}) and the fact that the Lipschitz constant of $\nabla \phi$ is $C_2$ to obtain
\begin{align*}
\langle \nabla \phi(\tilde z), e_{n-1}\rangle&=\langle \nabla \phi(\tilde z) - \nabla \phi(\tilde z - b_{n-1}^{\perp})+ \nabla \phi(\tilde z - b_{n-1}^{\perp})-\nabla \phi(\tilde w), e_{n-1} \rangle \\
& \leq C_2|b_{n-1}^{\perp}|+ \frac{1}{b_{n-1}}\langle \nabla \phi(b_{n-1}e_{n-1}+ \tilde w)-\nabla \phi(\tilde w), b_{n-1}e_{n-1} \rangle\\
& \leq C_2|b_{n-1}^{\perp}|- \frac{1}{b_{n-1}} C_1 |b_{n-1}e_{n-1}|^2\\
&\leq C_2\bigg(\frac{C_1}{C_2} b_{n-1}\bigg)- C_1b_{n-1} = 0\\
\end{align*}      

\textit{End of Claim.} \\

\hskip .1in 

\noindent Let $z\in \overline{B_{\frac{\delta_y}{2}}(y)} \cap E$. Without loss of generality, we may assume $\frac{\delta_y}{2}\leq \rho_y$. By the claim we obtain $$\Phi_z \Big(B_{\eta}^{n-1}(\bar z)\cap \big(\bar z + C_\alpha(\pi(\bar z))\big)\Big) \subset \partial \Lambda \setminus \overline{P_\Lambda(\Omega)}$$ so that by (\ref{e3}),
\begin{equation} \label{e1}
\Phi_y^{-1}\Big(\Phi_z\big(B_{\eta}^{n-1}(\bar z)\cap (\bar z+C_\alpha(\pi(\bar z)))\big)\Big) \subset B_{\frac{3}{4}\delta_y}(\tilde y) \cap \Phi_y^{-1}\big(\partial \Lambda\setminus \overline{P_\Lambda(\Omega)}\big).
\end{equation}
Now $\Phi_y^{-1}(\Phi_z(\bar z))=\tilde z$, and since $\Lambda$ is uniformly Lipschitz, $\Phi_y^{-1} \circ \Phi_z$ has a uniform Lipschitz constant; hence, there exists a cone $C_{\tilde \alpha}(\tilde \pi(\tilde z))$, where $\tilde \alpha$ depends only on the Lipschitz constant of $\Lambda$ and $\tilde \pi(\tilde z)$ is an $(n-2)$-dimensional hyperplane, for which 
\begin{equation} \label{e2}
(\tilde z + C_{\tilde \alpha}(\tilde \pi(\tilde z))) \cap \Phi_y^{-1}\big(\Phi_z(B_{\eta}^{n-1}(\bar z))\big) \subset \Phi_y^{-1}\Big(\Phi_z\big(B_{\eta}^{n-1}(\bar z) \cap (\bar z+C_\alpha(\pi(\tilde y)))\big)\Big).
\end{equation}
By (\ref{e3}), we obtain 
\begin{equation*}
(\tilde z + C_{\tilde \alpha}(\tilde \pi(\tilde z))) \cap B_{\mu}^{n-1}(\tilde z) \subset (\tilde z + C_{\tilde \alpha}(\tilde \pi(\tilde z))) \cap \Phi_y^{-1}(\Phi_z(B_{\eta}^{n-1}(\bar z))),
\end{equation*}
which combines with (\ref{e1}), and (\ref{e2}) to yield,  

$$(\tilde z + C_{\tilde \alpha}(\tilde \pi(\tilde z))) \cap B_{\mu}^{n-1}(\tilde z)\subset B_{\frac{3}{4}\delta_y}(\tilde y) \cap \Phi_y^{-1}\big(\partial \Lambda\setminus \overline{P_\Lambda(\Omega)}\big) \subset \Phi_y^{-1}\Big(\overline{B_{r_y}(y)} \cap \big(\partial \Lambda\setminus \overline{P_\Lambda(\Omega)}\big)\Big);$$
hence, 
$$(\tilde z + C_{\tilde \alpha}(\tilde \pi(\tilde z))) \cap B_{\mu}^{n-1}(\tilde z) \cap \Phi_y^{-1}\big(\overline{ B_{\frac{\delta_y}{2}}(y)} \cap E)= \emptyset.$$ 

\noindent Now applying Corollary \ref{corgeom}, we obtain that $\Phi_y^{-1}\big(\overline{ B_{\frac{\delta_y}{2}}(y)} \cap E)$ is finitely $(n-2)$-rectifiable. Since $\phi_y$ is bi-Lipschitz with uniform Lipschitz constant, we have 
\begin{equation} \label{e11}
\mathcal{H}^{n-2}\big(\overline{B_{\frac{\delta_y}{2}}(y)} \cap E \big) < \infty.
\end{equation}
This yields (\ref{goal}) and finishes the proof.

\end{proof}

\begin{cor} \label{inve}
Assume $\Omega \subset \mathbb{R}^n$ is a convex, bounded domain and $\Lambda \subset \mathbb{R}^n$ is a uniformly convex, bounded domain with $C^{1,1}$ boundary. If $\overline{\Omega} \cap \overline{\Lambda}=\emptyset$, then $$\mathcal{H}^{n-2}\big(\partial(P_\Lambda(\Omega)\cap \partial \Lambda)\big)<\infty.$$    
\end{cor}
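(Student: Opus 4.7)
The proof is essentially a deduction from Proposition \ref{geom2b} together with compactness. The plan has three short steps.

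First, I would observe that the disjointness hypothesis $\overline{\Omega} \cap \overline{\Lambda} = \emptyset$ implies $\Omega \cap \Lambda = \emptyset$, so $\partial(\Omega \cap \Lambda) = \emptyset$ and hence $\partial(\partial(\Omega \cap \Lambda) \cap \partial \Lambda) = \emptyset$. Thus Proposition \ref{geom2b} applies directly to give that $\partial(P_\Lambda(\Omega) \cap \partial \Lambda)$ itself is \emph{locally} $\mathcal{H}^{n-2}$ finite: each $y \in \partial(P_\Lambda(\Omega) \cap \partial \Lambda)$ admits a radius $\epsilon_y > 0$ (namely the one produced in the proof of Proposition \ref{geom2b}) with $\mathcal{H}^{n-2}\big(\overline{B_{\epsilon_y}(y)} \cap \partial(P_\Lambda(\Omega) \cap \partial \Lambda)\big) < \infty$.

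Second, I would promote this to global finiteness by a compactness argument. The set $\partial(P_\Lambda(\Omega) \cap \partial \Lambda)$, viewed as the (topological) boundary of a subset of the compact manifold $\partial \Lambda$, is closed in $\partial \Lambda$ and therefore compact, since $\partial \Lambda$ itself is compact (as $\Lambda$ is bounded with $C^{1,1}$ boundary). The open cover $\{B_{\epsilon_y}(y)\}_{y \in \partial(P_\Lambda(\Omega) \cap \partial \Lambda)}$ therefore admits a finite subcover indexed by points $y_1, \dots, y_N$, and subadditivity of Hausdorff measure yields
\begin{equation*}
\mathcal{H}^{n-2}\big(\partial(P_\Lambda(\Omega) \cap \partial \Lambda)\big) \leq \sum_{i=1}^{N} \mathcal{H}^{n-2}\big(\overline{B_{\epsilon_{y_i}}(y_i)} \cap \partial(P_\Lambda(\Omega) \cap \partial \Lambda)\big) < \infty.
\end{equation*}

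There is no real obstacle here, since Proposition \ref{geom2b} already contains all the geometric content; the only ingredient that needs a moment of care is checking that the target set is closed in $\partial \Lambda$, which is automatic because boundaries of sets in any topological space are closed. The disjointness hypothesis is used only once, at the very start, to kill the exceptional set $\partial(\partial(\Omega \cap \Lambda) \cap \partial \Lambda)$ that appears in Proposition \ref{geom2b}.
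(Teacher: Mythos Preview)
Your proposal is correct and follows essentially the same approach as the paper: both observe that the disjointness hypothesis makes the exceptional set $\partial(\partial(\Omega\cap\Lambda)\cap\partial\Lambda)$ empty, and then use compactness of $\partial(P_\Lambda(\Omega)\cap\partial\Lambda)$ to upgrade the local $\mathcal{H}^{n-2}$ finiteness from Proposition~\ref{geom2b} to global finiteness. The paper's proof is simply a more compressed version of your argument.
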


\begin{proof}
Simply note that $$\partial (P_\Lambda(\Omega)\cap \partial \Lambda) \setminus \partial (\partial(\Omega \cap \Lambda)\cap \partial \Lambda)=\partial (P_\Lambda(\Omega)\cap \partial \Lambda),$$ and as the latter set is compact, the result follows from Proposition \ref{geom2b}.

\end{proof}

Note that Proposition \ref{geom2b} is a purely geometric result. We will now connect this geometry with the optimal partial transport problem.

\begin{lem} \label{yay} Assume $\Omega \subset \mathbb{R}^n$ is a strictly convex, bounded domain and $\Lambda \subset \mathbb{R}^n$ is a uniformly convex, bounded domain with $C^{1,1}$ boundary. Let $\tilde\Psi_m$ be the $C^1(\mathbb{R}^n)$ extension of $\Psi_m$ to $\mathbb{R}^n$ given by Theorem \ref{Fii1}. Then 

\begin{equation} \label{dfg}
\nabla \tilde \Psi_m(A_2)\subset \partial (P_\Lambda(\Omega) \cap \partial \Lambda)\setminus \partial (\partial(\Omega \cap \Lambda)\cap \partial \Lambda),
\end{equation}
 with $A_2$ as in Definition \ref{d1}. 
\end{lem}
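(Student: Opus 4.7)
The plan is to take an arbitrary $x \in A_2$, set $y := \nabla\tilde\Psi_m(x)$ and $\nu := (x-y)/|x-y|$, and verify that $y$ lies in $\partial(P_\Lambda(\Omega) \cap \partial\Lambda)$ but not in $\partial(\partial(\Omega \cap \Lambda) \cap \partial\Lambda)$. Unpacking $A_2 \subset S_1 \subset K$, Remark \ref{r0}, and $A_2 \cap \partial U_m = \emptyset$, one extracts: $x \in \partial\Omega$, $y \in \partial_{nt}\Lambda$, $y \neq x$, $L(x) \cap \overline{\Omega \cap U_m} \subset \partial(\overline{\Omega \cap U_m})$, and $x \in U_m$. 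Since $U_m$ is open, the $K$-condition localizes near $x$ to $L(x) \cap \overline\Omega \cap B_r(x) = \{x\}$ for some $r > 0$, and the definition of $\partial_{nt}\Lambda$ identifies $\nu$ with the outward unit normal to $\Lambda$ at $y$.

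I would first record two parallel separation facts: $y \notin \overline\Omega$ and $x \notin \overline\Lambda$. For $y \notin \overline\Omega$: if $y \in \overline\Omega$, strict convexity of $\Omega$ would force $(y,x) \subset \Omega$, and its terminal portion in $B_r(x) \cap \Omega$ would sit inside $L(x) \cap \Omega \cap U_m$, contradicting the localized ray condition. Symmetrically, if $x \in \overline\Lambda$, strict convexity of $\Lambda$ would place $(y,x) \subset \Lambda$, violating outward-normality of $\nu$ at $y$. The first fact immediately gives a neighborhood of $y$ disjoint from $\overline{\Omega \cap \Lambda}$, hence $y \notin \partial(\partial(\Omega \cap \Lambda) \cap \partial \Lambda)$. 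Using the second fact, I would then approximate $x$ from within $\Omega$ by $x_k \to x$; since $x \notin \overline\Lambda$, eventually $x_k \notin \overline\Lambda$, so $P_\Lambda(x_k) \in \partial\Lambda$, and by $1$-Lipschitz continuity of $P_\Lambda$ together with $P_\Lambda(x) = y$ (since $x - y = |x-y|\nu$ lies in the normal cone of $\overline\Lambda$ at the $C^{1,1}$ point $y$), we get $P_\Lambda(x_k) \to y$, placing $y \in \overline{P_\Lambda(\Omega) \cap \partial\Lambda}$.

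The technical heart is proving $y \in \overline{\partial\Lambda \setminus P_\Lambda(\Omega)}$. I would first produce a supporting hyperplane $H$ to $\overline\Omega$ at $x$ containing the direction $\nu$: since neither $\nu$ nor $-\nu$ lies in $\operatorname{int} T_x\Omega$ (by the localized ray condition) and $N_x\Omega \setminus \{0\}$ is connected (as $x$ is an extreme point of $\overline\Omega$ by strict convexity, so $N_x\Omega$ has trivial lineality), the continuous functional $v' \mapsto v' \cdot \nu$ must vanish at some nonzero $v \in N_x\Omega$; strict convexity then upgrades $H := \{z : v \cdot (z-x) = 0\}$ to $H \cap \overline\Omega = \{x\}$, with $\overline\Omega \subset \{\tilde z \cdot v \leq 0\}$ in coordinates where $y = 0$, $\nu = e_n$, $x = (0, t^{*})$. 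Parameterizing $\partial\Lambda$ near $y$ as the graph of a uniformly concave $\phi$ with $\phi(0) = 0$, $\nabla\phi(0) = 0$, $D^2\phi \leq -c_1 I$ (from uniform convexity of $\Lambda$), the candidate points $y_\varepsilon := (\varepsilon v, \phi(\varepsilon v)) \in \partial\Lambda$ satisfy $y_\varepsilon \to y$, and a direct computation using $\nabla\phi(\varepsilon v) \cdot v \leq -c_1 \varepsilon |v|^2$ shows that for every $t \geq 0$ the candidate preimage $w := y_\varepsilon + t\,\nu_\Lambda(y_\varepsilon)$ has $\tilde w \cdot v \geq \varepsilon|v|^2 > 0$, so $w \notin \overline\Omega$ and hence $y_\varepsilon \notin P_\Lambda(\Omega)$. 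The main obstacle is the supporting-hyperplane step, which must simultaneously exploit strict convexity of $\Omega$ and the geometric content of the $K$-condition; once it is in hand, the uniform-convexity bound on $\Lambda$ yields the local non-surjectivity of $P_\Lambda$ at $y$ by a short direct calculation.
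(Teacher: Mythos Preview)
Your proof is correct and follows the same overall route as the paper: identify $y=\nabla\tilde\Psi_m(x)=P_\Lambda(x)$ via the $\partial_{nt}\Lambda$ condition, observe that the ray $L(x)$ is tangent to $\Omega$ at $x$ because $x\in\partial\Omega\setminus\partial U_m$ lies in the open set $U_m$, and deduce that $y$ sits on the relative boundary of $P_\Lambda(\Omega)\cap\partial\Lambda$ while avoiding $\partial(\partial(\Omega\cap\Lambda)\cap\partial\Lambda)$. The paper's proof is extremely terse at two points that you have fully worked out: (i) the existence of a supporting hyperplane to $\overline\Omega$ at $x$ containing the direction $\nu$, which the paper records as ``$L_t$ is tangent to $\Omega$ at $x$; hence, it is on a tangent space to $\Omega$ at $x$'' and which you prove by a connectedness/intermediate-value argument on $N_x\Omega\setminus\{0\}$ (an equivalent but different device from the proper-separation theorem the paper implicitly appeals to); and (ii) the conclusion $y\in\partial(P_\Lambda(\Omega)\cap\partial\Lambda)$, which the paper asserts in one line and which you justify by the explicit approximants $y_\varepsilon=(\varepsilon v,\phi(\varepsilon v))$ together with the uniform-concavity estimate $\nabla\phi(\varepsilon v)\cdot v\le -c_1\varepsilon|v|^2$ --- essentially the same computation that appears inside the Claim of Proposition~\ref{geom2b}. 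Your extra step $y\notin\overline\Omega$ also makes the exclusion $y\notin\partial(\partial(\Omega\cap\Lambda)\cap\partial\Lambda)$ transparent, whereas the paper's one-line justification for that exclusion is not spelled out.
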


\begin{proof}
Let $y=\nabla \tilde \Psi_m(x) \in \nabla \tilde \Psi_m(A_2)$. Since $A_2 \subset S_1$, we have $\nabla \tilde \Psi_m(x)\neq x$ and $$y \notin \partial (\partial(\Omega \cap \Lambda)\cap \partial \Lambda).$$ Moreover, let $L_t:=\nabla \tilde \Psi_m(x)+ \frac{x-\nabla \tilde \Psi_m(x)}{|x-\nabla \tilde \Psi_m(x)|}t$ and note that the half-line $\{L_t\}_{t\geq0}$ is tangent to the active region. Since $x \in \partial \Omega \setminus \partial U_m$, it follows that $L_t$ is tangent to $\Omega$ at $x$; hence, it is on a tangent space to $\Omega$ at $x$. Next, let $z=P_\Lambda(x) \in \partial \Lambda$ (recall that $P_\Lambda$ is the orthogonal projection operator). Then by the properties of the projection operator (and the convexity of $\Lambda$), we know that $x-z$ is parallel to $N_\Lambda(z)$. Since $x \in S_1$, it follows that $\nabla \tilde \Psi_m(x) \in \partial_{nt} \Lambda$; in particular, $x-\nabla \tilde \Psi_m(x)$ is parallel to $N_\Lambda(\nabla \tilde \Psi_m(x))$. Thus, by uniqueness of the projection, it readily follows that $z=\nabla \tilde \Psi_m(x)=y$. Combining $\{L_t\}_{t\geq0} \subset \mathbb{T}_x \Omega$ and $y=P_\Lambda(x)$ yields $y \in \partial (P_\Lambda(\Omega)\cap \partial \Omega)$.              
\end{proof}  

Next, we turn our attention to the set $A_1$. Recall $S_1=A_1 \cup A_2$, and as evidenced by Lemma \ref{yay}, the set $A_2$ has a rich geometric structure. Analogously, the next proposition provides insight into the geometry of $A_1$ (via Corollary \ref{sig}).   

\begin{prop} (Nontransverse intersection points never map to nontransverse intersection points)  \label{propo}
Suppose that $\Omega \subset \mathbb{R}^n$ and $\Lambda \subset \mathbb{R}^n$ are bounded, strictly convex domains, and let $\tilde\Psi_m$ be the $C^1(\mathbb{R}^n)$ extension of $\Psi_m$ to $\mathbb{R}^n$ given by Theorem \ref{Fii1}. Then $$\nabla \tilde \Psi_m(\partial_{nt} \Omega) \cap \partial_{nt} \Lambda = \emptyset,$$ where $\partial_{nt} \Lambda$ (and by duality $\partial_{nt} \Omega$) is defined in (\ref{nt}). 
\end{prop}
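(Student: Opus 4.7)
The plan is a proof by contradiction. Suppose $x \in \partial_{nt}\Omega$ and $y := \nabla\tilde\Psi_m(x) \in \partial_{nt}\Lambda$, and set $v := y - x$. Reading (\ref{nt}) at $x$ with $\nabla\tilde\Psi_m(x) = y$ gives $\overline\Omega \subset \{z : \langle z - x, v\rangle \leq 0\}$, while the dual condition at $y$ (using $\nabla\tilde\Psi_m^*(y) = x$) gives $\overline\Lambda \subset \{w : \langle w - y, v\rangle \geq 0\}$. If $v \neq 0$, these two halfspaces are strictly separated by the midpoint hyperplane perpendicular to $v$.

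The main step exploits convexity of $\tilde\Psi_m$ together with strict convexity of $\Lambda$. Consider the segment $x_\epsilon := x - \epsilon v$ for small $\epsilon \geq 0$. Gradient monotonicity for the convex function $\tilde\Psi_m$ yields $\langle x_\epsilon - x, \nabla\tilde\Psi_m(x_\epsilon) - \nabla\tilde\Psi_m(x)\rangle \geq 0$, which rearranges to $\langle v, \nabla\tilde\Psi_m(x_\epsilon) - y\rangle \leq 0$. On the other hand, Theorem \ref{Fii1} gives $\nabla\tilde\Psi_m(x_\epsilon) \in \overline\Lambda$, and the halfspace inclusion above (from $y \in \partial_{nt}\Lambda$) gives $\langle v, \nabla\tilde\Psi_m(x_\epsilon) - y\rangle \geq 0$. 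Combining the two inequalities, $\nabla\tilde\Psi_m(x_\epsilon)$ lies in the intersection of $\overline\Lambda$ with the tangent hyperplane to $\Lambda$ at $y$, which by strict convexity of $\Lambda$ reduces to the single point $\{y\}$. Hence $\nabla\tilde\Psi_m(x_\epsilon) = y$ for every $\epsilon \in [0, \epsilon_0)$.

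To finish, the constancy of the gradient of $\tilde\Psi_m$ along the segment $\{x - \epsilon v : 0 \leq \epsilon < \epsilon_0\}$ places this entire segment in the subdifferential $\partial\tilde\Psi_m^*(y)$ via Fenchel--Young duality. But the symmetric application of Theorem \ref{Fii1} (interchanging $f$ with $g$ and $\Omega$ with $\Lambda$) produces a $C^1(\mathbb{R}^n)$ extension of $\Psi_m^*$, namely $\tilde\Psi_m^*$, so $\partial\tilde\Psi_m^*(y) = \{\nabla\tilde\Psi_m^*(y)\} = \{x\}$ is a single point. A nondegenerate segment collapsing to a single point forces $v = 0$, the desired contradiction.

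The main obstacle is the degenerate case $v = 0$ (i.e., $x = y \in \partial\Omega \cap \partial\Lambda$), in which the halfspace inclusions hold only trivially and the test segment degenerates. To handle it, one must argue from the set-membership constraints $x \in \overline{\Omega \cap \partial U_m}$ and $y \in \overline{\Lambda \cap \partial V_m}$, together with Lemma \ref{s01} (which pins down $\nabla\tilde\Psi_m(x) = x$ at such corner points) and Remark \ref{cmr} (the common region lies in the interior of the active region), to rule out the configuration under the strict convexity hypothesis; alternatively, one tests at points slightly away from $\partial(\Omega \cap \Lambda)$ where the halfspace separation becomes nondegenerate and the preceding argument applies.
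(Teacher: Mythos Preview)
Your monotonicity argument is correct and elegant up to the point where you conclude $\nabla\tilde\Psi_m(x_\epsilon)=y$ for every $\epsilon\geq 0$. The gap is in the last step. Fenchel--Young duality places the half-line $\{x-\epsilon v:\epsilon\ge 0\}$ inside $\partial(\tilde\Psi_m)^*(y)$, where $(\tilde\Psi_m)^*$ is the \emph{Legendre transform} of the extension $\tilde\Psi_m$. But the $C^1$ function you invoke at the end is the \emph{symmetric extension} $\widetilde{\Psi_m^*}$ coming from Theorem~\ref{Fii1} with the roles of $(f,\Omega)$ and $(g,\Lambda)$ interchanged; these are two different convex functions. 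Since $\nabla\tilde\Psi_m(\mathbb{R}^n)=\overline\Lambda$, the Legendre dual $(\tilde\Psi_m)^*$ is $+\infty$ outside $\overline\Lambda$, so at the boundary point $y\in\partial\Lambda$ its subdifferential will automatically contain the outward normal ray to $\Lambda$ at $y$ --- precisely the direction $-v$. Thus the inclusion $\{x-\epsilon v\}\subset\partial(\tilde\Psi_m)^*(y)$ is entirely consistent and yields no contradiction. To close the argument along your lines you would need some $\epsilon>0$ with $x_\epsilon\in\overline{U_m\cap\Omega}$, so that the homeomorphism in Theorem~\ref{Fii1} would force $x_\epsilon=x$; but nothing guarantees the ray $\{x-\epsilon v\}$ re-enters the active region.

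The degenerate case $v=0$ is also left open: when $\nabla\tilde\Psi_m(x)=x$ the inequality in \eqref{nt} becomes $0\le 0$ and carries no information about normals, so the sketch you give does not go through as stated.

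For comparison, the paper proceeds quite differently. When $\Omega\cap\Lambda\neq\emptyset$ it picks $z\in\Omega\cap\Lambda$ and computes $0<|x-y|^2=\langle x-y,x-z\rangle+\langle x-y,z-y\rangle<0$ directly from the two nontransversality conditions and strict convexity. When $\overline\Omega\cap\overline\Lambda=\emptyset$ it abandons convex-analytic manipulations altogether and gives a \emph{variational} argument: it reroutes a small amount of mass near $x$ (which travels the long distance $|x-y|$) to land near $y$ instead, freeing up an equal amount of target mass that was being received from a nearby free-boundary point $x_1$ travelling a strictly longer distance; the resulting plan is admissible and strictly cheaper, contradicting optimality of $\gamma_m$.
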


\begin{proof}
Let $$\nabla \tilde \Psi_m(x)=: y\in \nabla \tilde \Psi_m(\partial_{nt} \Omega) \cap \partial_{nt} \Lambda$$ and suppose $\Omega \cap \Lambda \not = \emptyset$. If $x=y$, then by strict convexity, $$\langle N_\Lambda(x), z-x\rangle<0$$ for all $z \in \overline \Lambda$. However, we also have $N_\Omega(x)=-N_\Lambda(x)$ (since $x=y\in \nabla \tilde \Psi_m(\partial_{nt} \Omega) \cap \partial_{nt} \Lambda$) so that $$\langle N_\Omega(x), z-x\rangle>0$$ for all $z \in \overline \Omega$. Now, pick $z\in \Omega \cap \Lambda$; then from the convexity of $\Omega$ we have $$\langle N_\Omega(x), z-x\rangle\leq0,$$ a contradiction. Therefore, we may assume without loss of generality that  $x \not = y$. By definition of $\partial_{nt} \Omega$ and $\partial_{nt} \Lambda$, $y-x$ is parallel to a normal of $\Omega$ at $x$ and $x-y$ is parallel to a normal to $\Lambda$ at $y$. Using the strict convexity of $\Lambda$ and convexity of $\Omega$, this means that for $z \in \Lambda \cap \Omega$, $$\langle x-y, z-y\rangle < 0,$$ and $$\langle y-x, z-x\rangle \leq 0.$$ Thus, $$0<|x-y|^2=\langle x-y, x-y\rangle = \langle x-y, x-z\rangle+ \langle x-y, z-y\rangle <0,$$ a contradiction. Therefore, we have reduced the problem to the case when $\Omega \cap \Lambda = \emptyset$. Suppose $\nabla \tilde \Psi_m(x_0) \in \nabla \tilde \Psi_m(\partial_{nt} \Omega) \cap \partial_{nt} \Lambda$ and let $x_1 \in \partial U_m \cap \Omega$. By strict convexity of $\Lambda$, note that $d:=dist\big(\nabla \tilde \Psi_m(x_1),\mathbb{T}_{\nabla \tilde \Psi_m(x_0)}\Lambda\big)>0$ and 
\begin{equation} \label{zzz}
|\nabla \tilde \Psi_m(x_0)-x_0|+d \leq |\nabla \tilde \Psi_m(x_1)-x_1|.
\end{equation}

\noindent By continuity of $\nabla \tilde \Psi_m$, for $\epsilon>0$, there exists $\delta=\delta(\epsilon)>0$ such that $$\nabla \tilde \Psi_m(B_\delta(x_1) \cap U_m) \subset B_\epsilon(\nabla \tilde \Psi_m(x_1)) \cap V_m.$$ Now let $A_\delta:=B_\delta(x_1) \cap U_m$, and for $\eta>0$, set $A_\eta:=B_\eta(x_0) \cap U_m^c \cap \Omega$. Pick $\epsilon>0$ small enough so that $A_\eta \cap A_\delta =\emptyset$, see Figure \ref{xenon2}. Then, by possibly reducing $\epsilon>0$ (thereby also reducing $\delta$), we may pick $\eta=\eta(\epsilon)>0$ small so that 
\begin{equation} \label{eq}
\int_{A_\eta} f(x)dx= \int_{A_\delta} f_m(x)dx.
\end{equation}
\begin{figure}[h!]
\centering 
\includegraphics[scale= .5]{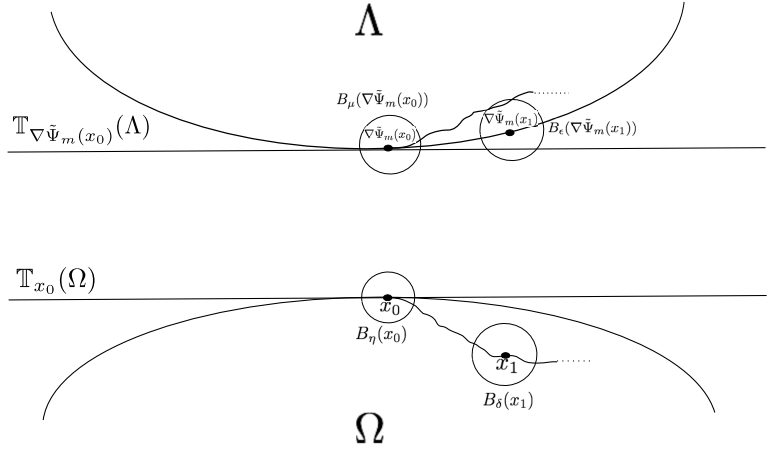}
\caption{Constructing a cheaper transference plan.}
\label{xenon2}
\end{figure} 
\noindent Next, let $\mu=\mu(\epsilon)>0$ be small enough so that $$\int_{A_\eta} f(x)dx = \int_{B_\mu(\nabla \tilde \Psi_m(x_0))\cap V_m^c} g(x) dx,$$ and let $$T_\epsilon: A_\eta \rightarrow B_\mu(\nabla \tilde \Psi_m(x_0))\cap V_m^c \cap \Lambda$$ be the optimal transport map between $f \chi_{A_\eta}$ and 
$g \chi_{D_\mu}$, where $$D_\mu:=B_\mu(\nabla \tilde \Psi_m(x_0))\cap V_m^c \cap \Lambda.$$

\noindent Define
\begin{displaymath}
   \tilde T(x):= \left\{
     \begin{array}{lr}
      T_\epsilon(x), & \hskip .05in x \in A_\eta \\ 
       x, \hskip .05in & \hskip 0.05in x \in A_\delta\\
      \nabla \tilde \Psi_m(x),  & \hskip .05in  x\in U_m \setminus B_\delta(x_1),
     \end{array}
   \right.
\end{displaymath}       

\begin{displaymath}
   \tilde f(x):= \left\{
     \begin{array}{lr}
      f(x), & \hskip .05in x \in A_\eta \\ 
       f_m(x), \hskip .05in & \hskip 0.05in x \in U_m \setminus B_\delta(x_1)\\
      0,  & \hskip .05in  otherwise.
     \end{array}
   \right.
\end{displaymath}

\noindent Set $\tilde \gamma:=(Id \times \tilde T)_{\#}\tilde f$; it is easy to check that $\tilde \gamma$ is admissible. Now let $z \in A_\eta$ and $w \in A_\delta$ and select $\epsilon$ small enough so that $$\eta(\epsilon)+\mu(\epsilon) + \delta(\epsilon)+\epsilon<\frac{d}{2}.$$ Then, by (\ref{zzz}) and the triangle inequality we obtain
\begin{align*}
|z-T_\epsilon(z)|&\leq |z-x_0|+|x_0-\nabla \tilde \Psi_m(x_0)|+|\nabla \tilde \Psi_m(x_0)-T_\epsilon(z)| \nonumber \\
&\leq \eta(\epsilon)+\mu(\epsilon)+ |x_1-\nabla \tilde \Psi_m(x_1)| - d \nonumber \\
&\leq \eta(\epsilon)+\mu(\epsilon)+ |x_1-w|+|w-\nabla \tilde \Psi_m(w)|+|\nabla \tilde \Psi_m(w)-\nabla \tilde \Psi_m(x_1)| - d \nonumber\\
&\leq \eta(\epsilon)+\mu(\epsilon) + \delta(\epsilon)+\epsilon - d + |w-\nabla \tilde \Psi_m(w)| \nonumber\\
&\leq |w-\nabla \tilde \Psi_m(w)|-\frac{d}{2}. \nonumber \\
\end{align*}
This shows that the cost of $\tilde T$ inside $A_\eta$ is strictly less than the one of $\nabla \tilde \Psi_m$ inside $A_\delta$, and since these maps coincide elsewhere, this contradicts the minimality of $\nabla \tilde \Psi_m$.

\end{proof}

Proposition \ref{propo} enables us to apply a weak form of the implicit function theorem to prove that $A_1$ is $\mathcal{H}^{n-2}$ $\sigma$-finite; moreover, this information combines with the geometry established in the proof of Lemma \ref{yay} to estimate the size of $\nabla \tilde \Psi_m(A_1)$. This is the content  of the following two corollaries.

\begin{cor} \label{sig}
Assume that $\Omega \subset \mathbb{R}^n$ and $\Lambda \subset \mathbb{R}^n$ are bounded, strictly convex domains. Then the relatively closed set $A_1$ (see Remark \ref{bbgun}) is $\mathcal{H}^{n-2}$ $\sigma$-finite. Moreover, if $\Omega$ has a $C^1$ boundary, then there exists $\{x_k\}_{k=1}^\infty \subset A_1$ and $R_k>0$ such that 
\begin{equation} \label{covr}
A_1 \subset \bigcup_{k=1}^\infty B_{R_k}(x_k),
\end{equation}
with $\mathcal{H}^{n-2}(A_1 \cap B_{R_k}(x_k))<\infty$. If in addition $\overline \Omega \cap \overline \Lambda = \emptyset$, then $\mathcal{H}^{n-2}(A_1)<\infty$.      

\end{cor}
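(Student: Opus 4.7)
The plan is to combine Proposition \ref{propo} (which supplies pointwise transversality at every point of $A_1$) with the rectifiability criterion of Corollary \ref{corgeom}. The central observation is that at every $x \in A_1$ the free boundary $\partial U_m$ meets the fixed boundary $\partial \Omega$ transversally, so $A_1$ is locally contained in the transverse intersection of two hypersurfaces and hence in an $(n-2)$-dimensional set.

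First I would establish pointwise transversality. For $x \in A_1 \subset S_1 = \nabla \tilde\Psi_m^{-1}(\partial_{nt}\Lambda) \cap K$ one has $\nabla \tilde\Psi_m(x) \in \partial_{nt}\Lambda$, so Proposition \ref{propo} forces $x \notin \partial_{nt}\Omega$ and produces $z_x \in \Omega$ with $\langle \nabla \tilde\Psi_m(x)-x,\, z_x-x\rangle > 0$. Since by Remark \ref{c1rem} the vector $\nabla \tilde\Psi_m(x)-x$ is a continuous, nonvanishing inward normal direction for $\partial U_m$ near $x$, this inequality is the quantitative statement that $\partial U_m$ is not tangent to $\partial \Omega$ at $x$. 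Setting $\pi(x) := (\nabla \tilde\Psi_m(x)-x)^\perp \cap \mathbb{T}_x \Omega$ (an $(n-2)$-plane thanks to transversality), the local $C^1$ parametrization of $\partial U_m$ near $x$ together with the local Lipschitz graph representation of $\partial \Omega$ (automatic for bounded convex domains; cleaner when $\partial \Omega$ is $C^1$) produces constants $\delta_x, \alpha_x > 0$ for which
\begin{equation*}
(B_{\delta_x}(x) \cap A_1) \cap (x + C_{\alpha_x}(\pi(x))) = \emptyset.
\end{equation*}
On any compact subset $K \subset A_1$, uniform $\delta, \alpha$ then follow by a compactness/contradiction argument: a degeneration $x_k \to x_\infty \in K$ would send $\nabla \tilde\Psi_m(x_\infty)-x_\infty$ into the tangent cone to $\Omega$ at $x_\infty$, placing $x_\infty \in \partial_{nt}\Omega$ and contradicting the previous step (using the continuity of $\nabla \tilde\Psi_m$ from Theorem \ref{Fii1}).

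With uniform $\delta, \alpha$ in hand, Corollary \ref{corgeom} gives $\mathcal{H}^{n-2}(K) < \infty$. To conclude, I would invoke Remark \ref{bbgun}, which says $A_1$ is relatively closed in $\partial \Omega \setminus \partial \Lambda$: cover $\partial \Omega \setminus \partial \Lambda$ by countably many balls $B_{R_k}(x_k)$ with centers $x_k \in A_1$ and $\overline{B_{R_k}(x_k)} \cap \partial \Lambda = \emptyset$, so that each $A_1 \cap \overline{B_{R_k}(x_k)}$ is compact; the preceding estimate then gives $\mathcal{H}^{n-2}(A_1 \cap B_{R_k}(x_k)) < \infty$, yielding both the $\sigma$-finiteness and the explicit covering (\ref{covr}) under the $C^1$ hypothesis on $\partial \Omega$. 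In the disjoint case $\overline \Omega \cap \overline \Lambda = \emptyset$, $A_1 \subset \partial \Omega$ is a closed subset of a compact set, hence compact, and a single application of the local estimate yields $\mathcal{H}^{n-2}(A_1) < \infty$.

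The main obstacle is the upgrade from the pointwise transversality (immediate from Proposition \ref{propo}) to the \emph{uniform} cone condition required by Corollary \ref{corgeom}. The compactness argument hinges on the fact that $A_1 \cap \partial_{nt}\Omega = \emptyset$ is stable under limits inside a compact $K$, a consequence of the continuity of $\nabla \tilde \Psi_m$ (Theorem \ref{Fii1}) and of the closed form of the inequality defining $\partial_{nt}\Omega$. The $C^1$ hypothesis on $\partial \Omega$ enters precisely to guarantee that the tangent planes $\mathbb{T}_x \Omega$ vary continuously in $x$, making the construction of $\pi(x)$ and the upgrade to uniform constants straightforward; without it one must rely on the (only almost-everywhere defined) supporting hyperplanes of the convex set $\Omega$, which still suffices for $\sigma$-finiteness but no longer produces a canonical centered cover.
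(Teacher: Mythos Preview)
Your overall strategy---use Proposition \ref{propo} to show $A_1 \cap \partial_{nt}\Omega = \emptyset$, then exploit the resulting transversality of $\partial U_m$ and $\partial\Omega$ to obtain local $(n-2)$-dimensional structure, and finally assemble via a countable cover (finite in the disjoint case by compactness of $A_1$)---is exactly the paper's approach. The execution differs: rather than building a cone condition and upgrading to uniform constants by compactness before invoking Corollary \ref{corgeom}, the paper applies a weak implicit function theorem (\cite[Corollary 10.52]{Vi}) pointwise at each differentiable $x \in A_1$, obtaining directly some $R(x)>0$ with $\partial U_m \cap \partial\Omega \cap B_{R(x)}(x)$ contained in an $(n-2)$-dimensional Lipschitz graph. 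The countable cover then follows without any uniformity argument, and in the disjoint case $S_2=\emptyset$ makes $X_s=S_1$ compact by Lemma \ref{s2}, hence $A_1$ compact, and a finite subcover suffices.

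Two technical points in your version need attention. First, Corollary \ref{corgeom} as stated takes an $(m-1)$-plane in $\mathbb{R}^m$ and returns $\mathcal{H}^{m-1}$-finiteness; your $\pi(x)$ is an $(n-2)$-plane sitting in $\mathbb{R}^n$, so the corollary does not apply as written. To make it work you must first pass to a local Lipschitz chart on $\partial\Omega$ and run the cone argument in $\mathbb{R}^{n-1}$ (this is precisely how the paper uses Corollary \ref{corgeom} in the proof of Proposition \ref{geom2b}). Second, your handling of the non-$C^1$ case (``still suffices for $\sigma$-finiteness'') is too vague: the paper deals with it by splitting off the set of non-differentiability points of $\partial\Omega$, which for a convex domain is $\mathcal{H}^{n-2}$ $\sigma$-finite by a result of Alberti \cite{Alb}, and then runs the transversality argument only at differentiability points. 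Without this decomposition (or an equivalent), your compactness/contradiction step can fail, since at a non-differentiable limit point $x_\infty$ the tangent plane $\mathbb{T}_{x_\infty}\Omega$ used to define $\pi(x_\infty)$ need not exist.
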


\begin{proof} Let $D_\Omega$ denote the set of differentiability points of $\partial \Omega$ and set $$A_1^1:= A_1 \cap \partial_{nt} \Omega, \hskip .1in A_1^2:=(A_1 \setminus \partial_{nt} \Omega) \cap D_\Omega.$$ If $x \in A_1^1$, then $\nabla \tilde \Psi_m(x) \in \partial_{nt} \Lambda$. Therefore, $\nabla \tilde \Psi_m(A_1^1) \subset \nabla \tilde \Psi_m(\partial_{nt} \Omega) \cap \partial_{nt} \Lambda$, so by Proposition \ref{propo}, $A_1^1 = \emptyset$. Next, since $\Omega \subset \mathbb{R}^n$ is convex,  it is well-known that the set of non-differentiability points is $\mathcal{H}^{n-2}$ $\sigma$-finite (see for instance \cite{Alb}); thus, $(A_1 \setminus \partial_{nt} \Omega) \setminus D_\Omega$ is $\mathcal{H}^{n-2}$ $\sigma$-finite. Now let $x \in A_1^2$ so that by Remark \ref{r0}, $x \in (\partial U_m \cap \partial \Omega) \setminus \partial_{nt} \Omega$. Therefore, at $x$, the free boundary $\partial U_m$ touches the fixed boundary transversally so that $N_{U_m}(x) \neq N_{\Omega}(x)$ and since $x$ is a differentiability point of $\Omega$, we may apply the weak implicit function theorem (see e.g. \cite[Corollary 10.52]{Vi}) to obtain $R(x)>0$ such that $\partial U_m \cap \partial \Omega \cap B_{R(x)}(x)$ is contained in an $(n-2)$-dimensional Lipschitz graph. In particular, $$\mathcal{H}^{n-2}(\partial U_m \cap \partial \Omega \cap B_{R(x)}(x)) < \infty.$$ Now 
\begin{equation} \label{juo}
A_1^2 \subset \bigcup_{x\in A_1^2} B_{R(x)}(x);
\end{equation}
thus, there exists $\{x_k\}_{k=1}^{\infty} \subset A_1^2$ such that $$A_1^2 = \bigcup_{k=1}^{\infty} \bigl(A_1^2 \cap B_{R_k}(x_k)\bigr).$$ Since $(A_1 \setminus \partial_{nt} \Omega) \setminus D_\Omega$ is $\mathcal{H}^{n-2}$ $\sigma$-finite, it readily follows that
\begin{equation} \label{gah}
A_1=\bigcup_{k=0}^\infty E_k,
\end{equation}
with $$\mathcal{H}^{n-2}(E_k) < \infty.$$ This proves the first part of the corollary. If $\Omega$ has a $C^1$ boundary, then $A_1=A_1^2$ so (\ref{covr}) follows from (\ref{juo}). Furthermore, if $\overline \Omega \cap \overline \Lambda = \emptyset$, then $S_2=\emptyset$ and $X_s=S_1$ is compact by Lemma \ref{s2}; this implies that $A_1=A_1^2$ is compact; thus, using (\ref{juo}), we may extract a finite subcover to conclude the proof.      

\end{proof}

\begin{cor} \label{sigc}
Assume $\Omega \subset \mathbb{R}^n$ and $\Lambda \subset \mathbb{R}^n$ are bounded, strictly convex domains, and let $\tilde\Psi_m$ be the $C^1(\mathbb{R}^n)$ extension of $\Psi_m$ to $\mathbb{R}^n$ given by Theorem \ref{Fii1}. Then the relatively closed set $\nabla \tilde \Psi_m(A_1)$ (see Remark \ref{bbgun}) is $\mathcal{H}^{n-2}$ $\sigma$-finite. Moreover, if $\Omega$ has a $C^1$ boundary, then there exist open sets $F_k \subset \partial(\Lambda \cap V_m)$ (in the subspace topology) such that 
\begin{equation} \label{covr2}
\nabla \tilde \Psi_m(A_1) \subset \bigcup_{k=1}^\infty F_k,
\end{equation}   
with $\mathcal{H}^{n-2}(\nabla \tilde \Psi_m(A_1)\cap F_k)<\infty$. If in addition $\overline \Omega \cap \overline \Lambda = \emptyset$, then $$\mathcal{H}^{n-2}(\nabla \tilde \Psi_m(A_1))<\infty.$$     
\end{cor}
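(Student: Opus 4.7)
The plan is to deduce the corollary from Corollary \ref{sig} after observing that on $A_1$ the gradient $\nabla \tilde \Psi_m$ agrees with the orthogonal projection $P_\Lambda$ onto $\overline{\Lambda}$. Fix $x \in A_1$ and set $y := \nabla \tilde \Psi_m(x) \in \partial_{nt} \Lambda$. Dualizing the definition (\ref{nt}) and using $\nabla \tilde \Psi_m^*(y) = x$ (guaranteed by the homeomorphism property of Theorem \ref{Fii1}) yields
\begin{equation*}
\langle x - y, z - y\rangle \leq 0 \quad \text{for all } z \in \Lambda,
\end{equation*}
which extends by continuity to all $z \in \overline{\Lambda}$. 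This is exactly the first-order condition characterizing $y$ as the unique nearest point in $\overline{\Lambda}$ to $x$, so $\nabla \tilde \Psi_m(x) = P_\Lambda(x)$ on $A_1$ and therefore $\nabla \tilde \Psi_m(A_1) = P_\Lambda(A_1)$ as sets.

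This identification is the crux of the argument: although $\nabla \tilde \Psi_m$ itself is merely continuous on $\overline{\Omega \cap U_m}$ and a priori cannot be used to push forward $\mathcal{H}^{n-2}$ bounds, the metric projection onto a closed convex set is $1$-Lipschitz on all of $\mathbb{R}^n$. Consequently the $\sigma$-finiteness assertion of Corollary \ref{sig} for $A_1$ transfers verbatim to $P_\Lambda(A_1) = \nabla \tilde \Psi_m(A_1)$.

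For the refined cover under the hypothesis $\partial \Omega \in C^1$, Corollary \ref{sig} supplies $A_1 \subset \bigcup_{k=1}^\infty B_{R_k}(x_k)$ with $\mathcal{H}^{n-2}(A_1 \cap B_{R_k}(x_k)) < \infty$. I would set $W_k := B_{R_k}(x_k) \cap \overline{\Omega \cap U_m}$ and $F_k := \nabla \tilde \Psi_m(W_k) \cap \partial(\Lambda \cap V_m)$. Because $\nabla \tilde \Psi_m : \overline{\Omega \cap U_m} \to \overline{\Lambda \cap V_m}$ is a homeomorphism (Theorem \ref{Fii1}), $\nabla \tilde \Psi_m(W_k)$ is relatively open in $\overline{\Lambda \cap V_m}$, so $F_k$ is open in $\partial(\Lambda \cap V_m)$. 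Since $A_1 \subset \partial U_m \cap \partial \Omega \subset \overline{\Omega \cap U_m}$, injectivity of $\nabla \tilde \Psi_m$ gives
\begin{equation*}
\nabla \tilde \Psi_m(A_1) \cap F_k \subset \nabla \tilde \Psi_m(A_1 \cap B_{R_k}(x_k)) = P_\Lambda(A_1 \cap B_{R_k}(x_k)),
\end{equation*}
and the $1$-Lipschitz bound then yields $\mathcal{H}^{n-2}(\nabla \tilde \Psi_m(A_1) \cap F_k) \leq \mathcal{H}^{n-2}(A_1 \cap B_{R_k}(x_k)) < \infty$, while the $F_k$ manifestly cover $\nabla \tilde \Psi_m(A_1)$. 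The disjoint case $\overline{\Omega} \cap \overline{\Lambda} = \emptyset$ is immediate: Corollary \ref{sig} then asserts $\mathcal{H}^{n-2}(A_1) < \infty$, and the same $1$-Lipschitz bound produces $\mathcal{H}^{n-2}(\nabla \tilde \Psi_m(A_1)) = \mathcal{H}^{n-2}(P_\Lambda(A_1)) < \infty$.

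The only genuine obstacle is the lack of Lipschitz regularity of the transport map, which is precisely what makes the naive push-forward of Hausdorff measure estimates illegitimate; the identification $\nabla \tilde \Psi_m|_{A_1} = P_\Lambda|_{A_1}$ dissolves this obstacle, after which every remaining claim reduces to Corollary \ref{sig} combined with the non-expansive character of convex projections and the homeomorphism property of $\nabla \tilde \Psi_m$.
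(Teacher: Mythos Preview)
Your proof is correct and follows essentially the same route as the paper: identify $\nabla\tilde\Psi_m|_{A_1}$ with the $1$-Lipschitz projection $P_\Lambda$, then push forward the estimates of Corollary~\ref{sig} and use the homeomorphism property of Theorem~\ref{Fii1} to manufacture the open sets $F_k$. Your $F_k$ actually coincide with the paper's (since $\nabla\tilde\Psi_m(W_k)\cap\partial(\Lambda\cap V_m)=\nabla\tilde\Psi_m(B_{R_k}(x_k)\cap\partial(\Omega\cap U_m))$ by the boundary-to-boundary property of the homeomorphism), and your explicit use of injectivity to obtain $\nabla\tilde\Psi_m(A_1)\cap F_k\subset P_\Lambda(A_1\cap B_{R_k}(x_k))$ makes a step transparent that the paper leaves implicit.
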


\begin{proof}
If $y \in \nabla \tilde \Psi_m(A_1)$, then $y=\nabla \tilde \Psi_m(x)$ with $x \in A_1 \subset S_1$; in particular, $y \in \partial_{nt} \Lambda$ so that $x-\nabla \tilde \Psi_m(x)$ is parallel to a normal of $\Lambda$ at $\nabla \tilde \Psi_m(x) \in \partial \Lambda$. Hence, $\nabla \tilde \Psi_m(x)=P_\Lambda(x)$, so that 
\begin{equation} \label{jkl}
\nabla \tilde \Psi_m(A_1)=P_\Lambda(A_1).
\end{equation}
Now from (\ref{gah}) in the proof of Corollary \ref{sig}, $A_1=\bigcup_{k=0}^\infty E_k$, with $\mathcal{H}^{n-2}(E_k) < \infty$, so $$P_\Lambda(A_1)=\bigcup_{k=0}^\infty P_\Lambda(E_k),$$ and since $P_\Lambda$ is Lipschitz, $\mathcal{H}^{n-2}(P_\Lambda(E_k))\leq \mathcal{H}^{n-2}(E_k)<\infty$, and this proves that $\nabla \tilde \Psi_m(A_1)$ is $\mathcal{H}^{n-2}$ $\sigma$-finite. If $\Omega$ has a $C^1$ boundary, then we may use (\ref{covr}) to define $F_k:=\nabla \tilde \Psi_m(B_{R_k}(x_k) \cap \partial (\Omega \cap U_m))$; note that since $\nabla \tilde \Psi_m$ is a homeomorphism between the active regions, each $F_k$ is open in $\partial(\Lambda \cap V_m)$. Moreover, thanks to (\ref{jkl}), $$\mathcal{H}^{n-2}(\nabla \tilde \Psi_m(A_1) \cap F_k)=\mathcal{H}^{n-2}(P_\Lambda(A_1)\cap F_k) \leq \mathcal{H}^{n-2}(A_1 \cap B_{R_k}(x_k))<\infty,$$  
and we obtain (\ref{covr2}). If in addition $\overline \Omega \cap \overline \Lambda = \emptyset$, then Corollary \ref{sig} implies $\mathcal{H}^{n-2}(P_\Lambda(A_1))\leq \mathcal{H}^{n-2}(A_1) <\infty$.   
\end{proof}

Since $S_1=A_1 \cup A_2$, we are now in a position to prove that the set $\nabla \tilde \Psi_m(S_1)$ is $\mathcal{H}^{n-2}$ $\sigma$-finite.

\begin{prop} \label{sigc2}
Assume $\Omega \subset \mathbb{R}^n$ is a strictly convex, bounded domain with $C^1$ boundary and $\Lambda \subset \mathbb{R}^n$ is a uniformly convex, bounded domain with $C^{1,1}$ boundary. Let $\tilde\Psi_m$ be the $C^1(\mathbb{R}^n)$ extension of $\Psi_m$ to $\mathbb{R}^n$ given by Theorem \ref{Fii1}. Then the relatively closed set $\nabla \tilde \Psi_m(S_1)$ (see Corollary \ref{impa}) is $\mathcal{H}^{n-2}$ $\sigma$-finite. Moreover, if $\overline \Omega \cap \overline \Lambda = \emptyset$, then $$\mathcal{H}^{n-2}(\nabla \tilde \Psi_m(S_1))<\infty.$$   

\end{prop}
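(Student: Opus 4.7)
The plan is to decompose $S_1 = A_1 \cup A_2$ as in Definition \ref{d1} and handle each of the images $\nabla \tilde \Psi_m(A_1)$ and $\nabla \tilde \Psi_m(A_2)$ separately using the machinery built up in the preceding results. The whole architecture of the section was designed with this decomposition in mind, so the proof should essentially be an assembly of Corollary \ref{sigc}, Lemma \ref{yay}, Proposition \ref{geom2b}, and Corollary \ref{inve}.

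For the piece coming from $A_1$, Corollary \ref{sigc} directly delivers everything we need: $\nabla \tilde \Psi_m(A_1)$ is $\mathcal{H}^{n-2}$ $\sigma$-finite in general (using the $C^1$ hypothesis on $\partial\Omega$ to run the weak implicit function theorem argument there), and it is actually $\mathcal{H}^{n-2}$ finite in the disjoint case, so no further work is needed on this side.

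For the piece coming from $A_2$, I would first invoke Lemma \ref{yay} to obtain the inclusion
$$\nabla \tilde \Psi_m(A_2) \subset \partial (P_\Lambda(\Omega) \cap \partial \Lambda)\setminus \partial (\partial(\Omega \cap \Lambda)\cap \partial \Lambda),$$
which requires only the convexity plus $C^{1,1}$ regularity of $\Lambda$ to make the projection identification $\nabla \tilde \Psi_m(x)=P_\Lambda(x)$ meaningful. Then Proposition \ref{geom2b} says the right-hand side is \emph{locally} $\mathcal{H}^{n-2}$ finite. Since $\nabla \tilde \Psi_m(A_2) \subset \partial \Lambda$ is contained in the bounded, hence Lindelöf, set $\overline{\Omega \cup \Lambda}$, the local finiteness upgrades to $\sigma$-finiteness by extracting a countable subcover of the covering by doubling neighborhoods supplied by Proposition \ref{geom2b}. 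In the disjoint case, Corollary \ref{inve} bypasses this covering step entirely and gives global $\mathcal{H}^{n-2}$ finiteness in one shot.

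Combining, since $\nabla \tilde \Psi_m(S_1)= \nabla \tilde \Psi_m(A_1) \cup \nabla \tilde \Psi_m(A_2)$ and the class of $\mathcal{H}^{n-2}$ $\sigma$-finite (respectively $\mathcal{H}^{n-2}$ finite) sets is closed under finite unions, both conclusions of the proposition follow. I do not anticipate a genuine obstacle: the hard analytic and geometric work — the classification of extreme points in the uniform localization lemma, the projection/cone argument in Proposition \ref{geom2b}, and the ``nontransverse never maps to nontransverse'' dichotomy in Proposition \ref{propo} — has already been carried out, and the present proposition is their common harvest. The only mild care required is the passage from local to global $\mathcal{H}^{n-2}$ finiteness on the $A_2$ side, which is handled by the standard second-countability/Lindelöf covering argument sketched above.
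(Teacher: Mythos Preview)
Your proposal is correct and follows essentially the same approach as the paper's proof: the paper decomposes $S_1=A_1\cup A_2$, handles $\nabla\tilde\Psi_m(A_1)$ via Corollary~\ref{sigc}, handles $\nabla\tilde\Psi_m(A_2)$ via Lemma~\ref{yay} together with Proposition~\ref{geom2b} (extracting a countable cover from the local $\mathcal{H}^{n-2}$ finiteness), and in the disjoint case invokes Corollary~\ref{inve} and the disjoint clause of Corollary~\ref{sigc}. Your covering argument for the $A_2$ piece is exactly the one the paper carries out explicitly.
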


\begin{proof}
Recall $\nabla \tilde \Psi_m(A_2) \subset \partial(P_\Lambda(\Omega) \cap \partial \Lambda) \setminus \partial (\partial(\Omega \cap \Lambda)\cap \partial \Lambda)$ (see (\ref{dfg})). Thus, for each $y \in \nabla \tilde \Psi_m(A_2)$, Proposition \ref{geom2b} implies the existence of $R_y>0$ such that $$\mathcal{H}^{n-2}(B_{R_y}(y)\cap \nabla \tilde \Psi_m(A_2))<\infty.$$ Now pick $\{y_k\}_{k\in \mathbb{N}} \subset \nabla \tilde \Psi_m(A_2)$ with this property so that $$\nabla \tilde \Psi_m(A_2)\subset \bigcup_{k=1}^\infty B_{R_{y_k}}(y_k).$$ Set $\tilde F_k^2:= B_{R_{y_k}}(y_k) \cap \nabla \tilde \Psi_m(A_2)\cap  \partial(\Lambda \cap V_m)$ and note that since $\nabla \tilde \Psi_m(A_2) \subset \partial(\Lambda \cap V_m)$, $$\nabla \tilde \Psi_m(A_2) = \bigcup_{k=1}^\infty \tilde F_k^2,$$ with $\mathcal{H}^{n-2}(\tilde F_k^2)<\infty$. Recalling $S_1=A_1\cup A_2$ and setting $\tilde F_k^1:=\nabla \tilde \Psi_m(A_1)\cap F_k$, an application of Corollary \ref{sigc} yields the $\mathcal{H}^{n-2}$ $\sigma$-finiteness with $\tilde F_k:=\tilde F_k^1 \cup \tilde F_k^2$.
Finally, if $\overline \Omega \cap \overline \Lambda = \emptyset$, then thanks to Corollary \ref{sigc}, $$\mathcal{H}^{n-2}(\nabla \tilde \Psi_m(A_1))<\infty,$$  and $$\mathcal{H}^{n-2}(\nabla \tilde \Psi_m(A_2))<\infty$$ by Corollary \ref{inve} and (\ref{dfg}).  

\end{proof}

Before proving the main result of this section (i.e. Theorem \ref{thhm}), we need one more statement about the size of the set consisting of points at the intersection of the target free boundary with fixed boundary that are the image of corresponding points at the intersection of the source free boundary with fixed boundary under the partial transport map.  

\begin{prop} \label{propo23} Assume $\Omega \subset \mathbb{R}^n$ and $\Lambda \subset \mathbb{R}^n$ are bounded, strictly convex domains, and let $$G:=\nabla \tilde \Psi_m(\partial U_m \cap \partial \Omega)\cap \partial V_m \cap \partial \Lambda,$$ where $\tilde\Psi_m$ is the $C^1(\mathbb{R}^n)$ extension of $\Psi_m$ to $\mathbb{R}^n$ given by Theorem \ref{Fii1}. Then $G$ admits a decomposition $G=G_1 \cup G_2$, where $G_1$ is relatively closed in $\partial \Lambda \setminus \partial \Omega$ and $G_2$ is compact with $\mathcal{H}^{n-2}$ finite measure. Moreover,   
\begin{equation} \label{covr4}
G_1 \subset \bigcup_{k=1}^\infty B_{R_k}(y_k),
\end{equation}
with $\mathcal{H}^{n-2}(G_1 \cap B_{R_i}(y_i)) < \infty$. If in addition, $\overline \Omega \cap \overline \Lambda =\emptyset$, then we also have that $G_1$ is compact with $\mathcal{H}^{n-2}(G_1)<\infty.$ 
\end{prop}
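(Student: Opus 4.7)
The plan is to decompose $G=G_1\cup G_2$ by setting $G_1:=G\setminus\partial\Omega$ and $G_2:=G\cap\partial\Omega$. A sequential argument shows $G$ itself is compact: given $y_k=\nabla\tilde\Psi_m(x_k)\in G$ with $x_k\in\overline{\Omega\cap\partial U_m}\cap\partial\Omega$ (a compact set), extract $x_k\to x$ and use continuity of $\nabla\tilde\Psi_m$ together with closedness of $\partial V_m\cap\partial\Lambda$ to conclude $y_k\to\nabla\tilde\Psi_m(x)\in G$. Hence $G_2$ is compact (closed in the compact set $G$), and $G_1$ is relatively closed in $\partial\Lambda\setminus\partial\Omega$, since any limit of $G_1$-points staying in $\partial\Lambda\setminus\partial\Omega$ belongs to $G$ and misses $\partial\Omega$.

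Next I would establish the stronger statement $G_2=\emptyset$, which immediately yields $\mathcal{H}^{n-2}(G_2)=0$. Let $y\in G_2\subset\partial V_m\cap\partial\Omega\cap\overline\Lambda$; Lemma \ref{s01} gives $\nabla\tilde\Psi_m(y)=\nabla\tilde\Psi_m^*(y)=y$. Writing $y=\nabla\tilde\Psi_m(x)$ with $x\in\overline{\Omega\cap\partial U_m}\cap\partial\Omega$ and invoking injectivity of $\nabla\tilde\Psi_m$ on $\overline{U_m\cap\Omega}$ from Theorem \ref{Fii1} forces $x=y$, so in particular $y\in\overline{\Omega\cap\partial U_m}\cap\partial\Omega$, and by the homeomorphism between active regions also $y\in\overline{\Lambda\cap\partial V_m}\cap\partial\Lambda$. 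Because $\nabla\tilde\Psi_m(y)-y=0=\nabla\tilde\Psi_m^*(y)-y$, the defining inequalities of $\partial_{nt}\Omega$ and $\partial_{nt}\Lambda$ from (\ref{nt}) and its dual hold trivially, placing $y\in\partial_{nt}\Omega\cap\partial_{nt}\Lambda$. Then $y=\nabla\tilde\Psi_m(y)\in\nabla\tilde\Psi_m(\partial_{nt}\Omega)\cap\partial_{nt}\Lambda$, contradicting Proposition \ref{propo}.

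For the covering of $G_1$ and the disjoint case, for each $y\in G_1$ set $x:=\nabla\tilde\Psi_m^{-1}(y)\in\overline{\Omega\cap\partial U_m}\cap\partial\Omega$ and split into two cases. If $x\notin\partial_{nt}\Omega$, the weak implicit function theorem (\cite[Corollary 10.52]{Vi}) at a differentiability point of $\partial\Omega$ produces, as in Corollary \ref{sig}, a radius $r(x)>0$ with $\partial U_m\cap\partial\Omega\cap B_{r(x)}(x)$ contained in an $(n-2)$-dimensional Lipschitz graph; transferring this structure through the homeomorphism $\nabla\tilde\Psi_m$ restricted to the transverse stratum, mirroring the proof of Corollary \ref{sigc}, yields $\mathcal{H}^{n-2}(G_1\cap B_{R(y)}(y))<\infty$. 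If $x\in\partial_{nt}\Omega$ instead, then Proposition \ref{propo} forces $y\notin\partial_{nt}\Lambda$; moreover $y-x$ parallels the outer normal to $\Omega$ at $x$ and $y\neq x$ (since $y\notin\partial\Omega$), so convexity of $\Omega$ implies $y\notin\overline\Omega$ and hence $y\notin\partial(\Omega\cap\Lambda)$, making $\partial V_m$ locally $C^1$ at $y$ by the symmetric version of Remark \ref{c1rem}, so the weak implicit function theorem applied in the target gives the required $\mathcal{H}^{n-2}$ bound. Lindel\"of's property then extracts a countable subcover $\{B_{R_k}(y_k)\}$. When $\overline\Omega\cap\overline\Lambda=\emptyset$, $\partial\Omega\cap\partial\Lambda=\emptyset$ forces $G_2=\emptyset$ and $G=G_1$ is compact, so a finite subcover yields $\mathcal{H}^{n-2}(G_1)<\infty$. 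The principal obstacle is the pushforward in the first case, where one transfers $(n-2)$-dimensional Lipschitz structure through the merely $C^1$ homeomorphism $\nabla\tilde\Psi_m$; this is resolved by exploiting the local bi-Lipschitz control on the transverse stratum afforded by the $C^{1,\alpha}$ regularity of $\tilde\Psi_m^*$ from Corollary \ref{cor1}.
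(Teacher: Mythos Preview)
Your decomposition $G_1=G\setminus\partial\Omega$, $G_2=G\cap\partial\Omega$ differs from the paper's, which splits according to whether $y\in\partial_{nt}\Lambda$. Your topological claims (compactness of $G$, relative closedness of $G_1$) are fine, and your $G_2=\emptyset$ argument is plausible modulo a subtlety: membership in $\partial_{nt}\Omega$ requires $y\in\overline{\Omega\cap\partial U_m}$, whereas from the hypothesis you only have $y\in\partial U_m\cap\partial\Omega$, and these sets need not coincide; also, Proposition~\ref{propo} is being invoked at a fixed point $\nabla\tilde\Psi_m(y)=y$, where the nontransversality inequalities are vacuous and the normal-vector argument in its proof does not obviously apply.

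The decisive gap, however, is in Case~1 of your $G_1$ analysis. You obtain an $(n-2)$-dimensional Lipschitz graph containing $\partial U_m\cap\partial\Omega\cap B_{r(x)}(x)$ and then attempt to push this forward through $\nabla\tilde\Psi_m$ to bound $\mathcal{H}^{n-2}(G_1\cap B_{R(y)}(y))$. Your proposed remedy---the $C^{1,\alpha}$ regularity of $\tilde\Psi_m^*$ from Corollary~\ref{cor1}---does not yield bi-Lipschitz control: it makes $\nabla\tilde\Psi_m^*$ merely $C^{0,\alpha}$, and says nothing about Lipschitz regularity of $\nabla\tilde\Psi_m$. H\"older maps can increase Hausdorff dimension, so $\mathcal{H}^{n-2}$ finiteness is not preserved under such a pushforward. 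The paper avoids this obstacle by decomposing instead according to the \emph{target}: when $y\notin\partial_{nt}\Lambda$ one applies the weak implicit function theorem directly at $y$ (no pushforward needed), and when $y\in\partial_{nt}\Lambda$ one observes that $x-\nabla\tilde\Psi_m(x)$ is normal to $\Lambda$ at $\nabla\tilde\Psi_m(x)$, so that on the entire set $(\nabla\tilde\Psi_m)^{-1}(\partial_{nt}\Lambda)$ one has $\nabla\tilde\Psi_m=P_\Lambda$, the metric projection onto the convex set $\Lambda$, which is $1$-Lipschitz. This is the missing idea: the only place a source-to-target transfer is needed is exactly where the transport coincides with a Lipschitz projection.
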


\begin{proof}

Consider the decomposition $$G=G_1\cup G_2,$$ with $$G_1:= (\nabla \tilde \Psi_m(\partial U_m \cap \partial \Omega)\setminus \partial_{nt} \Lambda)\cap (\partial V_m \cap \partial \Lambda),$$ and $$G_2:=\nabla \tilde \Psi_m(\partial U_m \cap \partial \Omega)\cap \partial_{nt} \Lambda.$$ Note that $G_2$ is compact; furthermore, split $G_2=G_2^1 \cup G_2^2$, with $$G_2^1:=\nabla \tilde \Psi_m(\partial_{nt} \Omega)\cap \partial_{nt} \Lambda,$$ $$G_2^2:=\nabla \tilde \Psi_m(\partial U_m \cap \partial \Omega \setminus \partial_{nt} \Omega)\cap \partial_{nt} \Lambda.$$ Using Proposition \ref{propo}, $G_2^1=\emptyset$. Next, observe that $K:= \partial U_m \cap \partial \Omega \cap (\nabla \tilde \Psi_m)^{-1}(\partial_{nt} \Lambda)$ is compact and by Proposition \ref{propo}, $$K=\partial U_m \cap \partial \Omega \cap (\nabla \tilde \Psi_m)^{-1}(\partial_{nt} \Lambda) \setminus \partial_{nt} \Omega.$$ Applying the weak implicit function theorem, we have that for all $x \in (\partial U_m \cap \partial \Omega) \setminus \partial_{nt} \Omega$, there exists $R(x)>0$ such that $$\mathcal{H}^{n-2}(B_{R(x)}(x)\cap \partial U_m \cap \partial \Omega)<\infty.$$  Therefore, by compactness, there exists $M \in \mathbb{N}$ and $\{x_i\}_{i=1}^M \subset K$ such that $$K \subset \bigcup_{i=1}^M B_{R(x_i)}(x_i).$$ Furthermore, recall that for $x \in (\nabla \tilde \Psi_m)^{-1}(\partial_{nt} \Lambda),$ $\nabla \tilde \Psi_m(x)=P_\Lambda(x)$ (see e.g. the proof of Lemma \ref{yay}). Hence, 
\begin{align}
\mathcal{H}^{n-2}(G_2)&=\mathcal{H}^{n-2}(\nabla \tilde \Psi_m(K))=\mathcal{H}^{n-2}(P_\Lambda(K)) \nonumber \\
&\leq \mathcal{H}^{n-2}(K) \leq \sum_{i=1}^M \mathcal{H}^{n-2}(B_{r(x_i)}(x_i)\cap K)<\infty. \label{newr}
\end{align} 
Now we show that $G_1$ is $\mathcal{H}^{n-1}$ $\sigma$-finite. Indeed, by applying the weak implicit function theorem once more, it follows that for all $y \in G_1 \subset (\partial V_m \cap \partial \Lambda)\setminus \partial_{nt} \Lambda$, there exists $R(y)>0$ such that $$\mathcal{H}^{n-2}(G_1 \cap B_{R(y)}(y))<\infty;$$ thus, we can find $\{y_k\}_{k=1}^\infty \subset G_1$ for which 
\begin{equation*} 
G_1 \subset \bigcup_{i=1}^\infty B_{R(y_i)}(y_i),
\end{equation*}
with $\mathcal{H}^{n-2}(G_1 \cap B_{R(y_i)}(y_i))<\infty$; this proves (\ref{covr4}). Next, assume further $\overline \Omega \cap \overline \Lambda = \emptyset.$ We claim that $G_1$ is compact. Indeed, let $y_n \in G_1$ with $y_n \rightarrow y \in \partial V_m \cap \partial \Lambda$. Set $x_n:=\nabla \tilde \Psi_m^*(y_n)$ and note that by continuity, $x_n \rightarrow x=\nabla \tilde \Psi_m^*(y)$. Since $\partial U_m \cap \partial \Omega$ is closed, it follows that $x \in \partial U_m \cap \partial \Omega$, so in particular $x\neq y$ (since $\overline \Omega \cap \overline \Lambda = \emptyset$). But we already know that $y \in  \nabla \tilde \Psi_m(\partial U_m \cap \partial \Omega) \cap (\partial V_m \cap \partial \Lambda)$; thus, it remains to show $y \not \in\partial_{nt} \Lambda$. If $y \in \partial_{nt} \Lambda$, strict convexity of $\Lambda$ implies that for all $z \in \overline{\Lambda}$, $$\langle x-y, z-y\rangle<0.$$ Since $y_n \not \in \partial_{nt} \Lambda$, for each $n \in \mathbb{N}$, there exists $z_n \in \Lambda$ for which $$\langle x_n-y_n, z_n-y_n \rangle \geq 0.$$ Now since $\overline \Lambda$ is compact, up to a subsequence, $z_n \rightarrow z \in \overline \Lambda$. Taking limits, it follows that $$\langle x-y, z-y\rangle\geq 0,$$ a contradiction; hence, $y \not \in \partial_{nt} \Lambda$, and so $G_1$ is compact (a similar argument shows that $G_1$ is relatively closed in $\partial \Lambda \setminus \partial \Omega$); thus, we may replace the infinite union in (\ref{covr4}) with a finite one to deduce $\mathcal{H}^{n-2}(G_1)< \infty$. 
\end{proof}

Now we have all the ingredients to prove that the free boundaries are local $C^{1,\alpha}$ hypersurfaces up to an explicit $\mathcal{H}^{n-2}$ $\sigma$-finite set, which is relatively closed in $(\partial \Omega \cup \partial \Lambda) \setminus \partial(\Omega \cap \Lambda)$. 

\begin{thm} \label{thhm} Assume $\Omega \subset \mathbb{R}^n$ and $\Lambda \subset \mathbb{R}^n$ are bounded, uniformly convex domains with $C^{1,1}$ boundaries, and let $\tilde\Psi_m$ be the $C^1(\mathbb{R}^n)$ extension of $\Psi_m$ to $\mathbb{R}^n$ given by Theorem \ref{Fii1}. Then away from $\partial(\Omega \cap \Lambda)$, the free boundary $\overline{\partial V_m \cap \Lambda}$ is a $C_{loc}^{1,\alpha}$ hypersurface up to the $\mathcal{H}^{n-2}$ $\sigma$-finite set: 
$$S:= \bigl((\nabla \tilde \Psi_m(\partial_{nc}U_m) \cup \nabla \tilde \Psi_m(S_1)) \cap \partial V_m \cap \partial \Lambda \bigr) \setminus \partial \Omega.$$ Moreover, $S$ is relatively closed in $\partial \Lambda \setminus \partial\Omega$, and 
if $\overline \Omega \cap \overline \Lambda = \emptyset$, then the free boundary $\overline{\partial V_m \cap \Lambda}$ is a $C_{loc}^{1,\alpha}$ hypersurface away from the compact, $\mathcal{H}^{n-2}$ finite set: $$S_d:= (\nabla \tilde \Psi_m(\partial_{nc}U_m) \cup \nabla \tilde \Psi_m(S_1)) \cap \partial V_m \cap \partial \Lambda \large.$$ By duality and symmetry, an analogous statement holds for $\overline{\partial U_m \cap \Omega}$.     
\end{thm}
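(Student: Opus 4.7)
\emph{Proof plan.} The plan is to bound the size and establish closedness of $S$ using the preliminary results of \S\ref{Section4}, and then case-split on whether the candidate regular point lies inside $\Lambda$ or on $\partial \Lambda$, reducing each subcase to Corollary \ref{cor2} or Corollary \ref{cor1}.

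For the size estimate, I would decompose $S = S^a \cup S^b$ with $S^a := \nabla \tilde\Psi_m(S_1) \cap \partial V_m \cap \partial \Lambda \setminus \partial \Omega$ and $S^b := \nabla \tilde\Psi_m(\partial_{nc} U_m) \cap \partial V_m \cap \partial \Lambda \setminus \partial \Omega$. The set $S^a$ is $\mathcal{H}^{n-2}$ $\sigma$-finite directly by Proposition \ref{sigc2}. For $S^b$, I would first observe that $\partial_{nc} U_m \subset (\partial U_m \cap \Omega) \cup (\partial U_m \cap \partial \Omega)$: interior points of $\Omega \cap U_m$ are trivially locally convex, and a boundary point $x \in \partial \Omega$ with $x \in U_m$ is also locally convex since near such $x$ one has $\Omega \cap U_m = \Omega \cap B_\epsilon(x)$. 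Then Proposition \ref{fbd}(a) forbids $x \in \partial U_m \cap \Omega$ whenever $\nabla \tilde\Psi_m(x) \in \partial V_m \cap \partial \Lambda$, forcing $x \in \partial U_m \cap \partial \Omega$ and hence $S^b \subset G$, the set appearing in Proposition \ref{propo23}, which supplies the required $\sigma$-finiteness. Relative closedness of $S$ in $\partial \Lambda \setminus \partial \Omega$ then follows by invoking Corollary \ref{impa} for $S^a$, together with the observation that $\nabla \tilde\Psi_m(\partial_{nc} U_m) \cap \partial V_m \cap \partial \Lambda$ is compact (continuous image of a compact), so its restriction to $\partial \Lambda \setminus \partial \Omega$ is relatively closed.

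For the local regularity claim, the case $y \in \partial V_m \cap \Lambda \setminus \partial(\Omega \cap \Lambda)$ follows immediately from Corollary \ref{cor2}. Otherwise $y \in \partial V_m \cap \partial \Lambda \setminus \partial(\Omega \cap \Lambda) \setminus S$, and the plan is to verify $y \notin F$ from Corollary \ref{cor1}, where $F = \nabla \tilde\Psi_m(\partial_{nc} U_m) \cup \nabla \tilde\Psi_m(S_1) \cup \nabla \tilde\Psi_m(S_2)$. Assuming temporarily that $y \notin \partial \Omega$, the exclusion of $\nabla \tilde\Psi_m(\partial_{nc} U_m)$ and $\nabla \tilde\Psi_m(S_1)$ follows from $y \notin S$. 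For $\nabla \tilde\Psi_m(S_2) = S_2$ (Remark \ref{r1}), I would inspect the three components of $S_2$: a point in $\partial U_m \cap \partial \Lambda \cap \Omega$ or $\partial V_m \cap \partial \Omega \cap \Lambda$ locally lies on $\partial(\Omega \cap \Lambda)$ (contradicting $y \notin \partial(\Omega \cap \Lambda)$), while $\partial \Omega \cap \partial \Lambda \cap \{\nabla \tilde\Psi_m = \mathrm{Id}\}$ is ruled out by $y \notin \partial \Omega$. Thus $y \notin F$, Corollary \ref{cor1} gives $\tilde\Psi_m^* \in C^{1,\alpha}$ near $y$, and then arguing as in the proof of Corollary \ref{cor2} with the roles of $\Omega$ and $\Lambda$ swapped (using the dual analog of Theorem \ref{Fii2}), the nonzero inward normal $\nabla \tilde\Psi_m^*(y) - y$ to $V_m$ at $y$ is locally H\"older continuous, which is precisely the desired $C^{1,\alpha}$ regularity. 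The residual subcase $y \in \partial \Omega \cap \partial \Lambda \setminus \partial(\Omega \cap \Lambda)$ can only arise when the two uniformly convex $C^{1,1}$ domains are externally tangent at $y$; then their common tangent hyperplane locally separates them and one reduces to the Caffarelli--McCann disjoint theory in a neighborhood.

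Finally, for the disjoint case $\overline \Omega \cap \overline \Lambda = \emptyset$ the exclusion of $\partial \Omega$ in the definition of $S$ is vacuous: $S_2 = \emptyset$ by Remark \ref{rem111}, so $X_s = S_1$ is compact by Lemma \ref{s2}, Proposition \ref{sigc2} gives $\nabla \tilde\Psi_m(S_1)$ compact with $\mathcal{H}^{n-2}$ finite measure, and Proposition \ref{propo23} supplies the analogous conclusion for $G \supseteq \nabla \tilde\Psi_m(\partial_{nc} U_m) \cap \partial V_m \cap \partial \Lambda$; together $S_d$ is compact with $\mathcal{H}^{n-2}$ finite measure. The main obstacle is the delicate case analysis in the preceding paragraph, particularly the systematic exclusion of the three components of $S_2$ and the handling of the exceptional external tangency scenario; these steps lean essentially on the structural information about $X_s$ provided by Lemmas \ref{s01}, \ref{s2}, and Remark \ref{r1}.
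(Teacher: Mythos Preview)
Your proposal is correct and follows essentially the same strategy as the paper: both arguments reduce the singular set to $G \cup \nabla\tilde\Psi_m(S_1)$ via the observation that $\partial_{nc}U_m \cap \nabla\tilde\Psi_m^{-1}(\partial V_m \cap \partial\Lambda) \subset \partial U_m \cap \partial\Omega$ (using convexity of $\Omega$ and Proposition~\ref{fbd}), and then invoke Propositions~\ref{sigc2} and~\ref{propo23} for the size estimate and Corollary~\ref{impa} for relative closedness; the regularity claim in both cases rests on Corollary~\ref{cor1} combined with the fact that $\nabla\tilde\Psi_m^*(y)-y$ gives the normal direction. Your decomposition $S = S^a \cup S^b$ is only cosmetically different from the paper's $S = S_{tr} \cup (\nabla\tilde\Psi_m(S_1)\setminus\partial(\Omega\cap\Lambda))$, and you are slightly more explicit than the paper in verifying $y\notin\nabla\tilde\Psi_m(S_2)$ componentwise and in isolating the external-tangency scenario $y\in\partial\Omega\cap\partial\Lambda\setminus\partial(\Omega\cap\Lambda)$, which the paper absorbs into the blanket claim $S_2\subset\partial(\Omega\cap\Lambda)$.
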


\begin{proof}
Let $$F:=\nabla \tilde \Psi_m(\partial_{nc}U_m) \cup \nabla \tilde \Psi_m(S_1 \cup S_2)$$ as in Corollary \ref{cor1} so that $\overline{\partial V_m \cap \Lambda}$ is $C_{loc}^{1,\alpha}$ away from $F$; now recall that $\nabla \tilde \Psi_m(S_2)=S_2 \subset \partial(\Omega \cap \Lambda)$ (Remark \ref{r1}). Hence, the singular set for $\overline{\partial V_m \cap \Lambda}$ away from $\partial(\Omega \cap \Lambda)$ is $S$. Now let $$S_{tr}:=\big((\nabla \tilde \Psi_m(\partial_{nc}U_m) \setminus \nabla \tilde \Psi_m(S_1))\cap \partial V_m \cap \partial \Lambda \big) \setminus \partial (\Omega \cap \Lambda),$$ ($tr$ stands for ``transverse") and note $$S=S_{tr} \cup (\nabla \tilde \Psi_m(S_1)\setminus \partial (\Omega \cap \Lambda)),$$ (indeed, recall that the free boundary never enters the common region: Remark \ref{cmr}). For $y \in S_{tr}$, set $x:=\nabla \tilde \Psi_m^*(y)$; since $\Omega$ is convex and $x\in \partial_{nc}U_m$, it follows that $x \notin \partial \Omega \setminus \partial U_m$. Moreover, since free boundary never maps to free boundary (by Proposition \ref{fbd}), we also have $x \notin \partial U_m \cap \Omega$, which implies $x \in \partial U_m \cap \partial \Omega$. In particular, $$S_{tr} \subset G,$$ where $G$ is the set from Proposition \ref{propo23}. Therefore, 
\begin{equation} \label{erd}
S\subset (G \cup \nabla \tilde \Psi_m(S_1)) \setminus \partial(\Omega \cap \Lambda),
\end{equation}
and so combining Proposition \ref{sigc2} with Proposition \ref{propo23} yields that $S$ is $\mathcal{H}^{n-2}$ $\sigma$-finite. 
Next, since $\partial_{nc} U_m$ is a closed set, Corollary \ref{impa} implies that $S$ is relatively closed in $\partial \Lambda \setminus \partial\Omega$.  
To prove the last part of the theorem, assume $\overline \Omega \cap \overline \Lambda = \emptyset$. Then, $S=S_d$ is closed, hence, compact; moreover, invoking Propositions \ref{sigc2} \& \ref{propo23}  again, we have $\mathcal{H}^{n-2}(\nabla \tilde \Psi_m(S_1)) < \infty$ and $\mathcal{H}^{n-2}(G) < \infty$, so $\mathcal{H}^{n-2}(S)<\infty$ by (\ref{erd}).    

\end{proof}

\begin{rem} \label{imprem} In the non-disjoint case, the $\mathcal{H}^{n-2}$ $\sigma$-finite singular set $S$ from Theorem \ref{thhm} is not established to be compact. However, note that since it is relatively closed in $\partial \Lambda \setminus \partial \Omega$, it follows that it is not dense in $\partial \Lambda \setminus \partial \Omega$, and this excludes a potential pathological scenario. Indeed, for $z \in \partial \Lambda$ and $R>0$ such that $\overline{B_R(z)} \cap \partial \Omega = \emptyset$, (\ref{erd}) implies $$S \cap \overline{B_R(z)} \subset \big(G \cap \overline{B_R(z)}\big) \cup \big(\nabla \tilde \Psi_m(S_1)\cap \overline{B_R(z)}\big).$$ However, by combining Proposition \ref{geom2b}, Lemma \ref{yay}, Corollary \ref{sigc}, and Proposition \ref{propo23} one may deduce $$\mathcal{H}^{n-2}\Big(\big(G \cap \overline{B_R(z)}\big) \cup \big(\nabla \tilde \Psi_m(S_1)\cap \overline{B_R(z)}\big)\Big)<\infty.$$ To see this, note that since $\nabla \tilde \Psi_m(A_2)\cap \overline{B_R(z)}$ stays away from the common region, it is $\mathcal{H}^{n-2}$ finite by Proposition \ref{geom2b} and Lemma \ref{yay}. Next, by Corollary \ref{sigc} we know that $\nabla \tilde \Psi_m(A_1)$ is relatively closed in $\partial \Lambda \setminus \partial \Omega$; hence, $\nabla \tilde \Psi_m(A_1)\cap \overline{B_R(z)}$ is compact and so we may extract a finite subcover from (\ref{covr2}) to deduce $\mathcal{H}^{n-2}\big(\nabla \tilde \Psi_m(A_1)\cap \overline{B_R(z)}\big)<\infty,$ and as $$\nabla \tilde \Psi_m(S_1)=\nabla \tilde \Psi_m(A_1)\cup \nabla \tilde \Psi_m(A_2),$$ this shows the $\mathcal{H}^{n-2}$ finiteness of $\nabla \tilde \Psi_m(S_1)\cap \overline{B_R(z)}$. Last, note that $$G \cap \overline{B_R(z)}=\big(G_1 \cap \overline{B_R(z)}\big)\cup \big(G_2 \cap \overline{B_R(z)}\big);$$ $G_2$ is $\mathcal{H}^{n-2}$ finite by Proposition \ref{propo23} and $G_1$ is relatively closed in $\partial \Lambda \setminus \partial \Omega$. This implies that $G_1 \cap \overline{B_R(z)}$ is compact, and so extracting a finite subcover in (\ref{covr4}) proves $$\mathcal{H}^{n-2}\big(G_1 \cap \overline{B_R(z)}\big)<\infty.$$                    


\end{rem}

\begin{rem}
Note that to prove Theorem \ref{thhm}, we needed a $C^{1,1}$ regularity assumption on the domains $\Omega$ and $\Lambda$. This regularity was used in the proof of Proposition \ref{geom2b}.
\end{rem}

\section{Open problems}
\label{Section5}

\noindent 1. In Theorem \ref{thhm}, we proved that the free boundary is locally $C^{1,\alpha}$ away from $\partial(\Omega \cap \Lambda)$ and a singular set at the intersection of the free boundary with the fixed boundary. Hence, if one would be able to show that the free boundary stays away from $\partial(\Omega \cap \Lambda)$ inside the supports, then it would follow that the free boundary is locally $C^{1,\alpha}$ inside the supports; indeed, Figalli has already established that the free boundaries are globally H\"{o}lder continuous \cite[Theorem 1]{Fi2}; thus, one could improve his result by proving that such intersections do not happen and applying Corollary \ref{cor2}. Moreover, if one can show that the free boundary stays away from $\partial(\Omega \cap \Lambda)$ altogether, it would also follow that the singular set $S$ of Theorem \ref{thhm} is compact (see Remark \ref{imprem}); hence, $\mathcal{H}^{n-2}$ - finite (instead of $\sigma$-finite). A counterexample in which the free boundary touches the common region would also be enlightening, indicating that singularities may very well exist.
\vskip .2in 
\noindent 2. In Corollary \ref{cor1}, we proved that the partial transport is locally $C^{1,\alpha}$ away from some singular set $F$. By using the fact that free boundary never maps to free boundary (except possibly at the intersection of fixed with free boundary), we were able to estimate the Hausdorff dimension of a portion of $F$, which showed up in the form of $S$ in Theorem \ref{thhm}, and since the normal to the free boundary is in the direction of transport, we were able to deduce some regularity on the free boundary. However, the entire singular set of the partial transport is still not quite understood. Indeed, the set $\nabla \tilde \Psi_m(\partial_{nc} U_m) \subset S$ emerged in the course of proving that the Monge-Amp\`{e}re measure associated to the partial transport is a doubling measure, see Lemma \ref{b}. Perhaps one can improve this lemma by replacing the set $\partial_{nc} U_m$ with the set of points for which the Monge-Amp\`{e}re does not double affinely. Since the free boundaries are semiconvex \cite[Proposition 4.5]{Fi}, it may be possible to exploit the geometry to obtain estimates on its Hausdorff dimension; this gets into the regularity theory for the Monge-Amp\`{e}re equation (up to the boundary) on semiconvex domains.  
\vskip .2in          
\noindent 3. In the course of our study, we proved that certain subsets of the singular set were empty (see e.g. Proposition \ref{propo}). Therefore, it is natural to wonder whether the singular set $S$ in Theorem \ref{thhm} is empty.
\vskip .2in
\noindent 4. In Theorem \ref{thhm}, we assumed that $\Omega$ and $\Lambda$ were $C^{1,1}$ and uniformly convex domains. This was utilized in the proof of Proposition \ref{geom2b}, which is a purely geometric statement about two convex sets. Therefore, a natural line of research would be to reduce the $C^{1,1}$ regularity assumption in Proposition \ref{geom2b} (it seems plausible for the statement to be true under only a strict convexity assumption). As an application one could thereby utilize the method we developed in \S \ref{Section4} to improve Theorem \ref{thhm}.\\

\noindent \textbf{Acknowledgments.} The author wishes to thank Alessio Figalli for intellectually stimulating discussions and for his careful remarks on a preliminary version of the paper. The author is also grateful to two anonymous referees for their valuable comments and suggestions that significantly improved the presentation of the paper. This research was partially supported by an NSF RTG fellowship for graduate studies at the University of Texas at Austin and by an NSF EAPSI fellowship. Moreover, the excellent research environment provided by MSI at the Australian National University during various stages of this work is also kindly acknowledged.

\section{Appendix}

\begin{proof}[\bf{Proof of Theorem \ref{th1}}] 
Assume $x \in \overline{\Omega \cap U_m}$ and $R>0$ is such that $$B_{3R}(x) \cap \big(\overline{\Lambda \setminus U_m} \cup X_s \big) = \emptyset,$$  with $B_{3R}(x)\cap \overline{\Omega \cap U_m}$ convex. Let $z_0 \in \overline{\Omega \cap U_m} \cap B_R(x)$ so that $B_R(z_0) \subset B_{2R}(x)$.  Thus, $$B_{R}(z_0) \cap \big(\overline{\Lambda \setminus U_m} \cup X_s \big) = \emptyset.$$ Note also that $B_{R}(z_0)\cap \overline{\Omega \cap U_m}$ is convex. Since $\nabla \tilde \Psi_m(x)=x$ on $\Lambda \setminus \overline{V_m}$, we have $$\overline{\partial V_m \cap \Lambda} \cap \partial(\Omega \cap \Lambda) \subset \big(\partial \Omega \cap \partial \Lambda \cap \{\nabla \tilde \Psi_m(z)=z\} \cup (\partial V_m \cap \partial \Omega \cap \Lambda)\big)\subset X_s,$$ and since $B_r(z_0)$ is disjoint from $X_s$ it follows that $z_0 \in X$ ($X$ is defined in Lemma \ref{b}). By Lemma \ref{b}, $B_R(z_0)$ forms a doubling neighborhood around $z_0$. Now Lemma \ref{d} tells us that $\lim_{\epsilon \rightarrow 0} Z_{\epsilon}(z_0)=\{z_0\}$ in Hausdorff distance. So for $R>0$ pick $\epsilon_0>0$ as in Lemma \ref{d} so that 
\begin{equation} \label{eq1}
Z_{s\epsilon_0}(z_0) \subset B_R(z_0) \hskip .1in \forall s \in [0,1]. 
\end{equation}
Note that this analysis was valid for any $z_0 \in \overline{\Omega \cap U_m} \cap B_R(x)$ and the $\epsilon_0$ only depends on $R$. We use this in the following claim.   

\vskip .2 in 
\textit{Claim:} Let $t \in (0,1)$ so that $\frac{t}{1-t}=n^{\frac{3}{2}}$ and choose any $t_2 \in (t,1)$. Let $s_0=s_0(t_2,1)$ be the constant from Lemma \ref{c}. Then for all $\epsilon \in (0,\epsilon_0]$, $z_0 \in  \overline{\Omega \cap U_m} \cap B_R(x)$, and $z_1 \in  \overline{\Omega \cap U_m} \cap B_R(x) \cap \partial Z_{\epsilon}(z_0)$ we have 
\begin{equation} \label{eq-1}
\tilde \Psi_m(z_1)\geq \tilde \Psi_m(z_0)+\langle \nabla \tilde \Psi_m(z_0), z_1-z_0\rangle + \frac{\epsilon}{t} s_0(t_2,1).
\end{equation}   
\vskip .2in 
\textit{Proof of claim:} Without loss of generality, assume $\nabla \tilde \Psi_m(z_0)=0$ and let $z_t:=(1-t)z_0+tz_1.$ Since $z_1 \in  \overline{\Omega \cap U_m} \cap B_R(x) \cap \partial Z_{\epsilon}(z_0)$, it follows that $z_t \in t \cdot \overline{Z_\epsilon}(z_0)$. We would like to apply Lemma \ref{c}, so take $z \in spt M_{\tilde \Psi_m} \cap Z_{\epsilon}(z_0) = \overline{\Omega \cap U_m} \cap Z_{\epsilon}(z_0)$. By (\ref{eq1}), we know $Z_{\epsilon}(z_0) \subset B_R(z_0)$ (pick $s=\frac{\epsilon}{\epsilon_0} \leq 1$) so we have that $z \in B_R(z_0) \subset B_{2R}(x)$. Therefore, $B_{R}(z) \subset B_{3R}(x)$ and since $B_{3R}(x) \cap X_s = \emptyset$, it follows that $B_{R}(z) \cap X_s = \emptyset$. Thus, by Lemma \ref{d}, we have that $Z_{s\epsilon}(z) \subset B_R(z)$ for all $s\in [0,1]$. Note that $$B_{R}(z) \cap spt M_{\tilde \Psi_m} = B_R(z) \cap (B_{3R}(x) \cap \overline{\Omega \cap U_m});$$ hence, $B_{R}(z) \cap spt M_{\tilde \Psi_m}$ is convex and still disjoint from $\overline{\partial V_m \cap \Lambda} \cap \partial(\Omega \cap \Lambda)$ (since it is disjoint from $X_s$), therefore, by Lemma \ref{b}, $B_R(z)$ is a doubling neighborhood around $z$ and since $Z_{s\epsilon}(z) \subset B_R(z)$ is a convex body for all $s \in [0,1]$, we satisfy the doubling assumption of Lemma \ref{c} (note that the doubling constant is universal depending only on the initial data). Therefore, we have that if $$\tilde z \in spt M_{\tilde \Psi_m} \cap t_2 \cdot Z_{\epsilon}(z_0) = \overline{\Omega \cap U_m}\cap t_2 \cdot Z_{\epsilon}(z_0),$$ then $Z_{s_0 \epsilon}(\tilde z) \subset Z_{\epsilon}(z_0)$. Now $z_0, z_1\in B_R(x)\cap \overline{\Omega \cap U_m}$, and by assumption, $B_{3R}(x)\cap \overline{\Omega \cap U_m}$ is convex (hence, $B_R(x)\cap \overline{\Omega \cap U_m}$ is convex); thus, it follows that $z_t \in \overline{\Omega \cap U_m}$. Hence, since $z_t \in \overline{\Omega \cap U_m} \cap t \cdot \overline{Z_\epsilon}(z_0) \subset \overline{\Omega \cap U_m} \cap t_2 \cdot Z_\epsilon(z_0),$ we obtain $$Z_{s_0 \epsilon}(z_t) \subset Z_{\epsilon}(z_0).$$ In particular, since $z_1 \in \partial Z_{\epsilon}(z_0)$, $z_1$ is not an interior point of $Z_{s_0 \epsilon}(z_t)$. We also claim that $z_0$ is not an interior point. Indeed, suppose that this is not the case and let $x:=z_0-z_t$. Since $z_t+x=z_0 \in Z_{s_0 \epsilon}(z_t)$, by John's Lemma \cite[Lemma 2]{C2} we have $z_t-\frac{x}{\alpha} \in Z_{s_0 \epsilon}(z_t)$ with $\alpha:=n^{\frac{3}{2}}$. But $x=t(z_0-z_1)$ and $\alpha=\frac{t}{1-t}$ so that $z_t-\frac{x}{\alpha} = z_t-(1-t)(z_0-z_1)=z_1$, a contradiction to the fact that $z_1$ is not an interior point of $Z_{s_0 \epsilon}(z_t)$. Thus, neither $z_0$ nor $z_1$ are interior points of $Z_{s_0 \epsilon}(z_t)=\{ x: \tilde \Psi_m(x) < L_{\epsilon}(x):=\tilde \Psi_m (z_t)+\langle \nu_{\epsilon}, x-z_t\rangle + s_0 \epsilon \}$, so 
\begin{equation} \label{eq2}
\tilde \Psi_m(z_0) \geq L_{\epsilon}(z_0),
\end{equation} 

\begin{equation} \label{eq3}
\tilde \Psi_m(z_1) \geq L_{\epsilon}(z_1). 
\end{equation} 
Now since $\tilde \Psi_m$ is convex, $$\tilde \Psi_m(z_0)+\langle \nabla \tilde \Psi_m(z_0), w-z_0\rangle \leq \tilde \Psi_m(w) \hskip .1in \forall w \in \mathbb{R}^n.$$ But $\nabla \tilde \Psi_m(z_0)=0$ and letting $w=z_t$ we readily obtain $\tilde \Psi_m(z_0) \leq  \tilde \Psi_m(z_t)$. Therefore, by combining this information with $(\ref{eq2})$, $$L_\epsilon(z_t)=\tilde \Psi_m(z_t)+\epsilon s_0 \geq \tilde \Psi_m(z_0)+\epsilon s_0 \geq L_\epsilon(z_0)+\epsilon s_0.$$ This implies that on the line segment from $z_0$ to $z_t$, the slope of $L_\epsilon$ is at least $\frac{\epsilon s_0}{|z_t-z_0|}$. In particular, $$L_\epsilon(z_1) \geq L_\epsilon(z_t) + \bigg(\frac{\epsilon s_0}{|z_t-z_0|}\bigg)|z_1-z_t|.$$ Now, using (\ref{eq3}) and that $\frac{|z_1-z_t|}{|z_t-z_0|}=\frac{1-t}{t}$, we obtain 
\begin{align*}
\tilde \Psi_m(z_1)& \geq L_{\epsilon}(z_1) \geq L_\epsilon(z_t) + \bigg(\frac{\epsilon s_0}{|z_t-z_0|}\bigg)|z_1-z_t|\\
&\geq \tilde \Psi_m(z_0)+\epsilon s_0+(\epsilon s_0)\bigg(\frac{1-t}{t}\bigg)=\tilde \Psi_m(z_0)+\frac{\epsilon s_0}{t},
\end{align*}            
which proves the claim. 

\textit{End of claim.}\\
Now we are ready to prove the theorem. Let $r=r(R, \epsilon_0)$ be the constant from \cite[Lemma A.5]{CM} and let $z_0 \neq z_1 \in \Omega \cap U_m \cap B_{\frac{r}{2}}(x)$; using (\ref{eq1}), we may apply \cite[Lemma A.5]{CM} to obtain $z_1 \in B_r(z_0) \subset Z_{\epsilon_0}(z_0)$. By Lemma \ref{d} and \cite[Lemma A.8]{CM}, we know that $Z_\xi(z_0)$ is continuous in the variable $\xi$ and converges uniformly to $z_0$ as $\xi \rightarrow 0$. This implies the existence of $\epsilon \in (0,\epsilon_0)$ so that $z_1 \in \partial Z_{\epsilon}(z_0)$. Now choose any $\bar t \in (0,1)$ and let $s_0(0,\bar t) \in (0,1)$ be the corresponding constant from Lemma \ref{c}. Observe that by Corollary \ref{core1}, $s < s_0(0,\bar t)^k$ implies    
$$Z_{s\epsilon_0}(z_0) \subset \bar t^k \cdot Z_{\epsilon_0}(z_0)$$ for $k \in \mathbb{N}$. Let $s:=\frac{\epsilon}{\epsilon_0}$ and note that by the uniform convergence of the sections, 
up to possibly replacing $r$ with some $\tilde r<r$ depending on $t_0, \epsilon_0,$ and the initial data, we may assume $s < s_0(0,\bar t)$ so that there exists $k \in \mathbb{N}$ for which $\frac{\log(s)}{\log(s_0(0,\bar t))} \in [k,k+1)$; in particular, $s < s_0(0,\bar t)^k$ and since $z_1 \in \partial Z_{\epsilon}(z_0)=\partial Z_{s\epsilon_0}(z_0),$ it follows that $z_1 \in \bar t^k \cdot \overline{Z_{\epsilon_0}(z_0)}$.       
Hence, $z_1=(1-\bar t^k)z_0+\bar t^k w$ for some $w \in \overline Z_{\epsilon_0}(z_0)$. Moreover, $$|z_1-z_0|=\bar t^k|w-z_0| \leq \bar t^{(\frac{\log(s)}{\log(s_0(0,\bar t))}-1)} |w-z_0|=s^{\frac{\log(\bar t)}{\log(s_0(0,\bar t))}} \frac{|w-z_0|}{\bar t}.$$ But by (\ref{eq1}), $w \in \overline{B_R(z_0)}$ so that $|w-z_0| \leq R$. Hence,  using the definition of $s$, 
\begin{equation} \label{eq4}
|z_1-z_0| \leq \gamma(\bar t,\epsilon_0,R) \epsilon^{\frac{\log(\bar t)}{\log(s_0(0,\bar t))}},
\end{equation}
for some explicit constant  $\gamma(\bar t,\epsilon_0,R)$. Now the convexity of $\tilde \Psi_m$ yields $$\tilde \Psi_m(z_0) \geq \tilde \Psi_m(z_1)+ \langle \nabla \tilde \Psi_m(z_1), z_0-z_1\rangle,$$ so that by combining this inequality with (\ref{eq-1}) and using (\ref{eq4}), we obtain $$\langle \nabla \tilde \Psi_m(z_1)- \nabla \tilde \Psi_m(z_0), z_1-z_0\rangle \geq \frac{\epsilon}{t} s_0(t_2,1) \geq C|z_1-z_0|^{\frac{\log(s_0(0,\bar t))}{\log(\bar t)}},$$
where $C=C(\epsilon_0,R,t,\bar t, t_2)>0$. Note that since $r=\beta \epsilon_0^{\frac{n}{2}}R^{1-n}$, by picking $\epsilon_0$ smaller if necessary, we may assume without loss of generality that $|z_1-z_0| < 1$. Therefore, we may take $p:=\max \bigl\{\frac{\log(s_0(0,\bar t))}{\log(\bar t)},2 \bigr\}$ as the convexity exponent.                           
\end{proof}

\signei

\end{document}